\numberwithin{equation}{section}
\newtheorem{theorem}{Theorem}[section]
\newtheorem{lemma}{Lemma}[section]
\newtheorem{corollary}{Corollary}[section]
\newtheorem{remark}{Remark}[section]
\newtheorem{definition}{Definition}[section]
\numberwithin{equation}{section} 
\title{
Monotonicity formulae, vanishing theorems and some geometric applications
\footnotetext[0]{2010 Mathematics Subject Classification. Primary: 32F32, 53C20, 53C40. }
\footnotetext[0]{ ${}^*$Supported by NSFC grant No 10971029, and NSFC-NSF grant No 1081112053.}
\footnotetext[0]{{\em Key words and phrases. stress energy tensor, monotonicity formula,
vanishing theorem, Ricci form, uniformization,  Gauss map, Bernstein theorem, submanifold,  total scalar curvature.}}
}
\author{
Yuxin Dong${}^*$
 and Hezi Lin}
\date{}
\begin{document}

\maketitle

\begin{abstract}
\noindent \textbf{Abstract}. \  Using the stress energy tensor, we establish some monotonicity formulae for
vector bundle-valued $p$-forms satisfying the conservation law, provided that the
base Riemannian (resp. K\"ahler) manifolds poss some real (resp. complex ) $p$-exhaustion functions. Vanishing theorems follow immediately from the monotonicity
formulae under suitable growth conditions on the energy of the $p$-forms.
As an application, we establish a monotonicity formula for the Ricci form of a
K\"ahler manifold of constant scalar curvature and then get a growth condition to
derive the Ricci flatness of the K\"ahler manifold. In particular, when the curvature does not change sign, the K\"ahler manifold is isometrically biholomorphic to $C^m$.
Another application is to deduce
the monotonicity formulae for volumes of minimal submanifolds in some outer
spaces with suitable exhaustion functions. In this way, we recapture the classical
volume monotonicity formulae of minimal submanifolds in Euclidean spaces.
We also apply the vanishing theorems to Bernstein type problem of submanifolds
in Euclidean spaces with parallel mean curvature. In particular, we may obtain
Bernstein type results for minimal submanifolds, especially for minimal real K\"ahler
submanifolds under weaker conditions.
 \end{abstract}

\section{Introduction}
In 1980, Baird and Eells \cite{BE} introduced the stress-energy tensor for maps between Riemannian manifolds,
which unifies various results on harmonic maps. Following \cite{BE}, Sealey \cite{Se} introduced the stress-energy tensor for $p$-forms with
values in vector bundles and established some vanishing theorems for harmonic $p$-forms. Since then, the stress-energy tensors
have become a useful tool for investigating the energy behaviour of vector bundle valued $p$-forms in various problems. Recently the
authors in \cite{DW}  presented a unified method to establish monotonicity formulae
and vanishing theorems for vector-bundled valued $p$-forms satisfying conservation laws
by means of the stress-energy tensors of various energy functionals in geometry and physics.
                         Their method is based on
a fundamental integral formula derived from the stress-energy tensor. Since the stress-energy tensor is a 2-tensor field,
one gets a $1$-form by contracting it with a vector field. The integral formula linked naturally to the conservation law
follows directly from the divergence of the 1-form. In \cite{DW}, the author mainly used the distance function on the base Riemannian manifold
to construct a suitable vector field. The Hessian of the distance function appears naturally in the integral formula. Consequently they
used Hessian comparison theorems and coarea formula to obtain those results.

The similar integral formula technique was also used by \cite{Do,Ta1,Ta2}  to investigate harmonic  maps between K\"ahler
manifolds. Some special exhaustion functions on the domain K\"ahler manifolds were used to establish the monotonicity
formulae of energy by \cite{Ta2} and the monotonicity formulae of the partial energies by \cite{Do} for
the harmonic maps. Consequently some Liouville type results and holomorphicity results were obtained by  \cite{Do} and \cite{Ta2} respectively.
These results are actually equivalent to the vanishing of some $1$-forms with values in the pull-back vector bundles.

In this paper, we shall establish monotonicity formulae and vanishing results for vector bundle valued
$p$-forms satisfying the conservation laws by means of the stress-energy tensors too. Following \cite{Do}, we hope to use some
exhaustion functions instead of distance functions to construct suitable vector fields in the integral formulae
derived by the stress-energy tensors. For these aims, we introduced the concepts of real (resp. complex) $p$-exhaustion
functions on Riemannian manifolds (resp. K\"ahler manifolds) (see their definition in $\S$2). Under some conditions on the
radial curvature of a Riemannian manifold (resp. a K\"ahler manifold), the distance function actually becomes a real
 $p$-exhaustion function (resp. a complex $p$-exhaustion function). The existence of a real $p$-exhaustion function
 on the base manifold  enables us to establish a monotonicity formula for a vector bundle valued $p$-form
 satisfying the conservation law and the growth order of the energy of the $p$-form is determined only by the exhaustion function
 (see Theorem 3.1).
 Inspired by \cite{Do}, we introduce the notion of $J$-invariant $p$-forms, which includes the differential of a pluriconformal
 map, the partial differentials of a map between K\"ahler manifolds,
 the second fundamental of a K\"ahler submanifold and the Ricci form of a K\"ahler manifold,
 etc. It turns out that a complex
 $p$-exhaustion function is more suitable to estimate the growth order of the energy of a $J$-invariant $p$-form.
 As a result, the existence of a complex $p$-exhaustion function enables us to establish a monotonicity formula for a
 $J$-invariant $p$-form satisfying the conservation law too (see Theorem 3.4). Besides establishing the
 global monotonicity formulae, we also consider some conditions which allow us to establish the monotonicity formulae
 outside a compact subset of the base manifolds. Such a kind of monotonicity formulae is useful in some applications.
 Clearly vanishing theorems follow immediately from these monotonicity formulae under suitable growth conditions.

 As an application of the above mentioned results, we establish a monotonicity formula for the Ricci form of a K\"ahler
manifold of constant scalar curvature and then get a growth condition to derive the Ricci flatness of the K\"ahler manifold. Due to the fact that a Ricci form is a $J$-invariant $2$-form, we derive these results on a K\"ahler manifold which posses a complex $2$-exhaustion function.
Furthermore, if the radial curvature of $M$ does not change sign, then the Ricci flatness implies that $M$ is isometrically biholomorphic to $C^m$. In this way, we get some uniformization results.
    On the other hand, if we take $p=0$ and consider a nonzero constant function in the monotonicity formulae, we may get monotonicity formulae for volumes of minimal submanifolds in some outer spaces with suitable exhaustion functions (see Theorem 4.1 and Theorem 4.2). In particular, we recapture the classical volume monotonicity formulae of minimal submanifolds
 in Euclidean spaces.

 The benefits of using more general exhaustion functions instead of the distance functions in establishing monotonicity and vanishing
 results are that one may not only relax the curvature conditions on the domain manifolds but also has more choices for constructing
 suitable vector fields in the integral formula. As an application, we shall investigate Bernstein type problem for
 submanifolds in Euclidean spaces with parallel
     mean curvature. This is still an active project in recent years (see, for examples, \cite{CM,JXY,PRS,RS,SZ1,SZ2,XG,XY1,XY2}
     and the references therein).
Recall that the Gauss maps of these submanifolds are harmonic. Hence the differentials of the Gauss maps satisfy the conservation law
by a result of Baird-Eells \cite{BE}. It is natural to apply the previous vanishing theorems to establish Liouville theorems of the
Gauss maps, which are equivalent to Bernstein type results for the submanifolds. We show that the extrinsic
distance functions of the submanifolds become real $1$-exhaustion functions if the second fundamental forms satisfy some decay conditions.
Consequently we obtain some Bernstein type theorems for submanifolds with parallel mean curvature under certain decay conditions on
the second fundamental forms and growth conditions on the integrals of the squared norms of the second fundamental forms.
      When the submanifolds are minimal, the Bernstein type results may be established under weaker conditions.
Finally we consider the Bernstein problem for minimal submanifolds with finite total scalar curvature. Most results on this
topic are obtained under the assumption that the minimal submanifolds have some stability.
For example, the authors in \cite{SZ1} proved that any stable complete minimal hypersurface with finite total scalar
curvature is a hyperplane. We refer the reader to \cite{FZ,LW,PRS,PV,SZ2,Wa,XD}  for recent progress on this topic
and references therein. According to our technique, we may establish some Bernstein theorems for minimal submanifolds with finite total
scalar curvature by assuming some boundary condition of a compact sublevel set of the extrinsic exhaustion distance function
(see Corollary 5.3, Corollary 5.4 and Remark 5.4), instead of assuming the stable condition. In particular,
we prove that a minimal real K\"ahler submanifolds with finite total scalar curvature must be an affine plane.

\section{Stress energy tensors and exhaustion functions}

     A smooth map $f:M \rightarrow N$ between
       two Riemannian manifolds is said to be a harmonic map if it is a critical point of the following energy functional:\\
     $$E(f)=\frac{1}{2}\int_M |df|^2dv$$
       with respect to any compactly supported variation.

 In \cite{BE}, Baird-Eells introduced the stress-energy tensor $S_f$ associated with $E(f)$ as follows
 $$ S_f=\frac{|df|^2}{2}g-df\odot df,$$ where $df\odot df\in \Gamma(T^*M\otimes T^*M)$ is a symmetric tensor defined by
 $$(df\odot df)(X,Y)=<df(X),df(Y)>.$$ Then they proved that a harmonic map satisfies the conservation law, that is,
 $divS_f=0$.

  Let $(M, g)$  be a Riemannian manifold and $\xi:E\rightarrow M$ be a smooth Riemannian vector bundle
   over $M$ with a metric compatible connection $\nabla^E$. Set $A^p(\xi)=\Gamma (\Lambda^pT^*M \otimes E)$ the space of smooth
   $p$-forms on $M$ with values in the vector bundle $\xi:E\rightarrow M$. The exterior covariant
   differentiation $d^\nabla:A^p(\xi)\rightarrow A^{p+1}(\xi)$ relative to the connection $\nabla^E$ is defined by (cf. \cite{EL})
   \begin{equation*}
   (d^\nabla\omega)(X_1,\cdots,X_{p+1})=\overset{p+1}{\underset{i=1}{\sum}}(-1)^{i+1}(\nabla_{X_i}
   \omega)(X_1,\cdots,\widehat{X_i},\cdots,X_{p+1}).
   \end{equation*}
   The codifferential operator $\delta^\nabla:A^p(\xi)\rightarrow A^{p-1}(\xi)$ characterized as the adjoint of $d^\nabla$ is defined by
 \begin{equation*}
(\delta^{\nabla}\omega)(X_1,\cdots,X_{p-1})=-\underset{i}{\sum}(\nabla_{e_i}\omega)(e_i,X_1,\cdots,X_{p-1}).
\end{equation*}

   For $\omega\in A^p(\xi)$, we define the energy functional of $\omega$ as follows:
   $$E(\omega)=\frac{1}{2}\int_M |\omega|^2dv_g.$$
  Its stress-energy tensor is:
\begin{equation}
S_{\omega} (X,Y)= \frac{|\omega|^2}{2}g(X,Y)-(\omega \odot \omega)(X,Y),
\end{equation}
where $\omega \odot \omega\in \Gamma(A^p(\xi)\otimes A^p(\xi)) $ is a symmetric tensor defined by
$$(\omega \odot \omega)(X,Y)=<i_X\omega,i_Y\omega>.$$
Here $i_X\omega\in A^{p-1}(\xi)$ denotes the interior product by $X\in TM$.
Notice that, if p=0, i.e.,
$\omega\in \Gamma(\xi)$, $i_X\omega=0$ then (2.1) becomes
\begin{equation}
S_{\omega} (X,Y)= \frac{|\omega|^2}{2}g(X,Y).
\end{equation}

For a $2$-tensor field $T\in \Gamma(T^*M\otimes T^*M)$, its divergence $divT\in\Gamma(T^*M)$ is defined by
\begin{equation*}
(divT)(X)=\underset{i}{\sum}(\nabla_{e_i}T)(e_i,X),
\end{equation*}
where $\{e_i\}$ is an orthonormal basis of $TM$. The divergence of $S_{\omega}$ is given by (cf. \cite{Ba,Se,Xi})
\begin{equation}
(divS_\omega)(X)=<\delta^\nabla\omega,i_X\omega>+<i_Xd^\nabla\omega,\omega>.
\end{equation}

For a vector field $X$ on $M$, its dual one form $\theta_X$ is given by
$$\theta_X(Y)=g(X,Y) \qquad \forall X\in TM. $$
The covariant derivative of $\theta_X $ gives a 2-tensor field $\nabla\theta_X$:
$$(\nabla\theta_X)(Y,Z)=(\nabla_Z\theta_X)(Y)=g(\nabla_Z X,Y), \qquad \forall Y,Z\in TM.$$
If $X=\nabla\psi$ is the gradient of some  smooth function $\psi$ on $M$, then $\theta_X=d\psi$ and $\nabla\theta_X=Hess(\psi)$.

For any vector field $X$ on $M$, a direct computation yields (cf. \cite{Xi} or Lemma 2.4 of \cite{DW}):
\begin{equation}
div(i_XS_{\omega})=<S_{\omega},\nabla\theta_X>+(divS_{\omega})(X).
\end{equation}

Let $D$ be any bounded domain of $M$ with $C^1$ boundary. By (2.4) and using the divergence theorem, we immediately have the
following integral formula (see \cite{DW,Xi}):
\begin{equation}
\int_{\partial D}S_{\omega}(X,\nu)ds_g=\int_{D}[<S_{\omega},\nabla \theta_X>+(divS_{\omega})(X)]dv_g.
\end{equation}
where $\nu$ is the unit outward normal vector field along $\partial D$. In particular, if $\omega$ satisfies the conservation
law, i.e. $divS_{\omega}=0$,
 then
\begin{equation}
\int_{\partial D}S_{\omega}(X,\nu)ds_g= \int_{D}<S_{\omega},\nabla \theta_X>dv_g.
\end{equation}

To apply the above integral formula, we introduce some special exhaustion functions.
Let $(M^m, g)$($m\geq 2$) be a $m$-dimensional Riemannian manifold and let $\Phi$ be a Lipschitz continuous function on $M^{m}$
  satisfying the following conditions:

(2.7) $\Phi\geqq 0$ and $\Phi$ is an exhaustion function of $M$, i.e., each sublevel set $B_{\Phi}(t):={{\{\Phi<t}}\}$
 is relatively compact in $M$ for $t\geqq 0$;

 (2.8) $\Psi=\Phi^{2}$ is of class $C^{\infty}$ and $\Psi$ has only discrete critical points;

 (2.9) The constant
  $k_{1}=inf_{x\in M}\{\lambda_{1}(x)+\lambda_{2}(x)+\cdots+\lambda_{m}(x)-2p\lambda_{m}(x)\}>0$, where $p$ is a nonnegative integer,
  $\lambda_1(x)\leq \lambda_2(x)\leq \cdots \leq  \lambda_m(x)$ are the eigenvalues of $Hess(\Psi)$,
  and the constant $k_2=\sup_{x \in M}|\nabla \Phi |^2$ is finite.

When $(M, g)$ is a K\"ahler manifold, we sometimes need the following notion of exhaustion functions, which is defined by the complex Hessian instead of the (real) Hessian of a function.

  Let $\Phi$ be a Lipschitz continuous function on $M^{m} (dim_C M=m>1)$
  satisfying (2.7), (2.8) and the following condition:

(2.10) The constant $\overline{k}_1=\inf_{x\in M}\{\overset{m}{\underset{i=1}{\sum}}\epsilon_i(x)-p\epsilon_m(x)\}$
   is positive where $\epsilon_1 \leq \epsilon_2 \leq  \cdots \leq \epsilon_m $ are the eigenvalues of the complex Hessian
   $ H(\Psi)=(\Psi_{i\overline{j}})$. The constant $\overline{k}_2= \sup_{x\in M}|\nabla \Phi|^2$ is finite. Set
\begin{equation}
 \tag{2.11}
\lambda=\frac{k_1}{2k_2}, \qquad \overline{\lambda}=\frac{\overline{k}_1}{\overline{k}_2}.
\end{equation}
 The function $\Phi$ with the properties (2.7), (2.8) and (2.9) (resp. (2.10)) will be called
  real (resp. complex) $p$-exhaustion function  in this paper. Notice that a complex $1$-exhaustion function is just the special exhaustion function discussed in \cite{Do,Ta2}.

Now we give  some examples of real and complex $p$-exhaustion functions.

\textbf{Example 2.1}.  For  $R^m$, we take$\Psi=\Phi^2=\overset{m}{\underset{i=1}{\sum}}
\frac{(x_i)^2}{a_i}$, where $a_i$ are any positive constants  satisfying
$1\leq a_1 \leq
a_2 \leq \cdots \leq a_m$,
 then $\Phi$ is a smooth exhaustion function of $R^m$, and $\Psi$  has only discrete critical points. For $x\in R^n-\{0\}$, we have
 \begin{equation*}
  \nabla \Phi(x)=\overset{m}{\underset{i=1}{\sum}} \frac{x_i/a_i}{\sqrt{\sum_{i=1}^{m}(x_i)^2/a_i}}\frac{\partial}{\partial x_i},
  \end{equation*}
\begin{equation*}
  Hess(\Psi)(x)= \overset{m}{\underset{i=1}{\sum}}
\frac{2}{a_i}dx_i \bigotimes dx_i.
\end{equation*}
So
\begin{equation*}
k_1=\frac{2}{a_1}+\frac{2}{a_2}+ \cdots +\frac{2}{a_m}-\frac{4p}{a_1},\qquad k_2=\underset{x\in R^m}{\sup}|\nabla\Phi|^2 \leq 1.
\end{equation*}

\textbf{Example 2.2}. For $C^m$, we take $\Psi=\Phi^2=\frac{|z_1|^2}{a_1}+\cdots +\frac{|z_m|^2}{a_m}$, where $a_i$ are any positive constants  satisfying
$1\leq a_1 \leq a_2 \leq \cdots \leq a_m$. A direct computation gives
\begin{equation*}
\overline{k}_1=\frac{2}{a_1}+\frac{2}{a_2}+ \cdots +\frac{2}{a_m}-\frac{2p}{a_1},\qquad \overline{k}_2=
         \underset{x\in C^m}{\sup}|\nabla\Phi|^2 \leq 1.
\end{equation*}
Clearly we may choose suitable $a_i$  such that $k_1$  (resp. $\overline{k}_1$) $>$0.
 Therefore the function $\Phi$ in Example 2.1 (resp. in Example 2.2) becomes a real (resp. complex) $p$-exhaustion function.

 \textbf{Example 2.3}. For a complete submanifold in Euclidean space $R^N$, we shall show in $\S$5 that the restriction of the extrinsic distance
 function becomes a real $1$-exhaustion (or even $p$-exhaustion) function if the second fundamental form of the submanifold satisfies some decay condition (see the proof of Theorem 5.1 or Remark 5.1).

 \textbf{Example 2.4}.  Let $i: M^m \rightarrow C^N$ be an $m$-dimensional submanifold and $F: C^N \rightarrow R$ be a smooth function on $C^N$. Then the composition formula for Hessian of maps gives
 $$Hess(F\circ i)(\eta_j, \overline{\eta}_k)=(HessF)(\eta_j, \overline{\eta}_k)+ dF (B(\eta_j, \overline{\eta}_k)),$$
 where $B$ denotes the second fundamental form of $M$ and
  $\{\eta_j=\frac{1}{\sqrt{2}}(e_j - iJe_j)\}_{j=1}^m$ is any unitary frame tangent to $M$. Since $M$ is a complex submanifold, we have
 $$Hess(F\circ i)(\eta_j, \overline{\eta}_k)=(HessF)(\eta_j, \overline{\eta}_k).$$
 Now let $F=\|z\|^2={\sum}_{A=1}^N|z_A|^2$
 and set $\Psi = \Phi^2=F\circ i$. Clearly the eigenvalues of the complex Hessian $H(\Psi)$ are $\epsilon_1= \cdots = \epsilon_m=2$.
 If $p<m$, then
 $$\overline{k}_1=\underset{x \in M}{\inf}\{\underset{i=1}{\overset{m}{\sum}}\epsilon_i-p \epsilon_m\}=2(m-p)>0.$$
 Therefore $\Phi$ is a complex $p$-exhaustion function with growth order
 $\overline{\lambda}\geq 2(m-p)$, because $\overline{k}_2\leq 1$. Recall that every Stein manifold $M^m$ can be realized as a closed submanifold
 of $C^N$ by a proper holomorphic map $\psi : M^m \rightarrow C^N$. Thus a Stein manifold $M$ admits a complex $p$-exhaustion function $\Psi=\Phi^2$
 with $\Phi = \psi^*(\|z\|)$ and $\overline{\lambda} \geq 2(m-p)$. Notice that the complex Hessian is computed with respect to the induced metric by $\psi$.
 It is well known that every  closed complex submanifold of a Stein manifold is a Stein manifold too. Hence Stein manifolds provide us many examples of K$\ddot{a}$hler manifolds which poss complex $p$-exhaustion functions.

 We now give some complete Riemannian manifolds (resp. K\"ahler manifolds)
whose distance functions are real (resp. complex) $p$-exhaustion functions.
\begin{lemma}
Let $(M^m, g)$($m \geq 2$) be an $m$-dimensional complete Riemannian manifold with a pole $x_0$ and let $r$ be the distance function relative to $x_0$.
Assume that there exist two positive functions $h_1(r)$ and $h_2(r)$ such that
$$h_1(r)[g-dr\otimes dr]\leq Hess(r)\leq h_2(r)[g-dr\otimes dr]$$
in the sence of quadratic forms, then
\begin{equation}
\tag{2.12}
\underset{i=1}{\overset{m}{\sum}}\lambda_i-2p\lambda_m\geq \begin{cases}
2+2(m-1)rh_1(r)- 4prh_2(r) & \text{if $rh_2(r)\geq 1,$} \\
2+ 2(m-1)rh_1(r)-4p  & \text{if  $rh_2(r)< 1,$}
\end{cases}
\end{equation}
\end{lemma}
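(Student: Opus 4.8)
The plan is to take $\Phi = r$, so that $\Psi = \Phi^2 = r^2$, and to express the Hessian of $\Psi$ in terms of the Hessian of $r$. Using $\nabla(r^2)=2r\nabla r$ together with the product rule for the Levi-Civita connection, I would first establish
\begin{equation*}
Hess(\Psi) = 2\,dr\otimes dr + 2r\,Hess(r).
\end{equation*}
The structural fact I would exploit next is that $\nabla r$ is a unit field along the geodesics emanating from the pole $x_0$, so that $Hess(r)(\nabla r,\cdot)=0$ and $dr(\nabla r)=1$. Consequently $\nabla r$ is an eigenvector of $Hess(\Psi)$ with eigenvalue exactly $2$, and the orthogonal complement $(\nabla r)^{\perp}$ is $Hess(\Psi)$-invariant, since the cross terms $dr\otimes dr(\nabla r, X)$ and $Hess(r)(\nabla r, X)$ both vanish for $X\perp\nabla r$.

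On $(\nabla r)^{\perp}$ the term $dr\otimes dr$ drops out, so $Hess(\Psi)$ restricts to $2r\,Hess(r)$. For a unit vector $X\perp\nabla r$ one has $[g-dr\otimes dr](X,X)=1$, whence the assumed comparison yields $2rh_1(r)\leq Hess(\Psi)(X,X)\leq 2rh_2(r)$. Thus the $m$ eigenvalues of $Hess(\Psi)$ consist of the single radial eigenvalue $2$ together with $m-1$ tangential eigenvalues $\mu_1\leq\cdots\leq\mu_{m-1}$, each lying in $[2rh_1(r),\,2rh_2(r)]$.

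To conclude, I would estimate the two ingredients of $\sum_{i=1}^m\lambda_i-2p\lambda_m$ separately, using $p\geq 0$. For the trace, invariance gives $\sum_{i=1}^m\lambda_i = 2+\sum_{j=1}^{m-1}\mu_j \geq 2+2(m-1)rh_1(r)$, a lower bound obtained through $h_1$. For the top eigenvalue, $\lambda_m=\max\{2,\mu_{m-1}\}\leq\max\{2,\,2rh_2(r)\}$, so that $-2p\lambda_m\geq -2p\max\{2,\,2rh_2(r)\}$, an upper bound obtained through $h_2$. The two cases of (2.12) then arise precisely by comparing $2rh_2(r)$ with $2$: when $rh_2(r)\geq 1$ the maximum equals $2rh_2(r)$ and contributes $-4prh_2(r)$, whereas when $rh_2(r)<1$ the radial value $2$ dominates and contributes $-4p$. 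Adding the trace estimate to the $\lambda_m$ estimate gives (2.12) in both cases.

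Since no curvature comparison beyond the assumed Hessian bounds on $r$ enters, this is essentially a direct eigenvalue computation rather than a proof with a genuine obstacle. The only point requiring care is the bookkeeping for $\lambda_m$: because it is subtracted with the nonnegative coefficient $2p$, it must be bounded from \emph{above}, and the piecewise form is dictated entirely by whether the fixed radial eigenvalue $2$ or the tangential bound $2rh_2(r)$ is the larger. I would therefore double-check the decoupling of the radial and tangential directions and the orientation of each inequality, as these are the places where a sign or a $\max$ could easily be mishandled.
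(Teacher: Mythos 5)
Your proof is correct and takes essentially the same approach as the paper: both start from $Hess(r^2)=2\,dr\otimes dr+2r\,Hess(r)$, bound the top eigenvalue $\lambda_m$ by $\max\{2,\,2rh_2(r)\}$ (giving the two cases), and bound the trace from below by $2+2(m-1)rh_1(r)$. The only difference is that you spell out the radial/tangential eigenvalue decomposition (via $Hess(r)(\nabla r,\cdot)=0$) that the paper leaves implicit.
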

\noindent
where $\lambda_1\leq \lambda_2\leq \cdots \leq \lambda_m$ are eigenvalues of $Hess(r^2)$. In particular, if $M$ is a
K\"ahler manifold of complex dimension $m$, then
\begin{equation}
\tag{2.13}
\underset{i=1}{\overset{m}{\sum}}\epsilon_i - p\epsilon_m \geq \begin{cases}
1+(2m-1)rh_1(r)-2prh_2(r) & \text{if $rh_1(r)\geq 1,$} \\
1+(2m-1)rh_1(r)- p[1+rh_2(r)] & \text{if $rh_2(r)< 1.$}
\end{cases}
\end{equation}
where $\epsilon_1\leq\epsilon_2\leq \cdots \leq \epsilon_m$ are eigenvalues of the complex Hessian $H(r^2)=((r^2)_{i\overline{j}})$.
\begin{proof}
It is known that $Hess(r^2)$ is given by
$$Hess(r^2)=2dr\otimes dr + 2rHess(r).$$
By the assumption, we have
\begin{equation}
\tag{2.14}
Hess(r^2) \leq 2dr\otimes dr +2rh_2(r)[g-dr\otimes dr].
\end{equation}
If $rh_2(r)\geq 1 (resp. < 1)$, then $\lambda_m\leq 2rh_2(r) (resp. = 2)$. Therefore
\begin{eqnarray*}
\underset{i=1}{\overset{m}{\sum}}\lambda_i-2p\lambda_m &\geq& 2+2(m-1)rh_1(r)- 4prh_2(r)\\
&{}& (resp.\  2+ 2(m-1)rh_1(r)-4p).
\end{eqnarray*}
This proves the first assertion.

Now suppose $M$ is a K\"ahler manifold of complex dimension $m$. Clearly we have (cf. also Lemma 4.4 of \cite{Do})
\begin{equation}
\tag{2.15}
\underset{i=1}{\overset{m}{\sum}}\epsilon_i \geq 1 + (2m-1)rh_1(r).
\end{equation}
By the assumption, we get
\begin{equation}
\tag{2.16}
\epsilon_m \leq  \begin{cases}
2rh_2(r) & \text{if $rh_2(r)\geq 1,$} \\
1+rh_2(r)  & \text{if  $rh_2(r)< 1.$}
\end{cases}
\end{equation}
In conclusion, we derive the following
\begin{equation*}
\underset{i=1}{\overset{m}{\sum}}\epsilon_i - p\epsilon_m \geq \begin{cases}
1+(2m-1)rh_1(r)-2prh_2(r) & \text{if $rh_1(r)\geq 1,$} \\
1+(2m-1)rh_1(r)- p[1+rh_2(r)] & \text{if $rh_2(r)< 1.$}
\end{cases}
\end{equation*}
\end{proof}
\begin{remark}
(1) The authors in \cite{DW} considered the stress energy tensors associated with more general functional,
 that is, the $F$-energy. If we take $F(t)=t$, the proof of Lemma 4.2 in \cite{DW} essentially gives  the lower bounds of
 ${\sum}_{i=1}^{m}\lambda_i-2p\lambda_m$ under the assumption $rh_2(r)\geq 1$.  (2) For $p=1$, it is clear that the estimation
 (2.13) is independent of the upper bound of $Hess(r)$, which was also pointed out in \cite{Do}.
\end{remark}
\begin{lemma}
Let ($M, g)$ be a complete Riemannian manifold with a pole  $x_0$ and let $r$ be the distance function relative to $x_0$.
Denote by $K_r$ the radial curvature of $M$.

(i) If $-\alpha^2\leq K_r \leq -\beta^2$ with $\alpha> 0$, $\beta> 0$, then
$$\beta \coth(\beta r)[g-dr \otimes dr] \leq Hess(r) \leq \alpha \coth(\alpha r)[g-dr \otimes dr].$$

(ii) If $-\frac{A}{(1+r^2)^{1+\epsilon}} \leq K_r \leq \frac{B}{(1+r^2)^{1+\epsilon}}$ with $\epsilon>0$, $A \geq 0$, $0\leq B < 2\epsilon$,
then
$$\frac{1-\frac{B}{2\epsilon}}{r} [g-dr \otimes dr] \leq Hess(r) \leq \frac{e^{\frac{A}{2\epsilon}}}{r}[g-dr \otimes dr]. $$

(iii) If $-\frac{a^2}{1+r^2} \leq K_r \leq \frac{b^2}{1+r^2}$ with $a\geq 0$, $b^2 \in[0,1/4]$, then
$$\frac{1+\sqrt{1-4b^2}}{2r} [g-dr \otimes dr] \leq Hess(r) \leq \frac{1+\sqrt{1+4a^2}}{2r}[g-dr \otimes dr].$$
\end{lemma}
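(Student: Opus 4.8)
The plan is to recognize the statement as the classical Hessian comparison theorem for the distance function from a pole, combined with explicit estimates of the associated second-order ODEs. Since $x_0$ is a pole, $r$ is smooth on $M\setminus\{x_0\}$ and every point is joined to $x_0$ by a unique minimizing geodesic free of conjugate points, so the comparison can be run globally with no cut-locus issues. First I would record two standard facts: $Hess(r)(\nabla r,\cdot)=0$, so it suffices to control $Hess(r)$ on $(\nabla r)^\perp$; and along a unit-speed radial geodesic $\gamma$ the bilinear form $Hess(r)|_{(\nabla r)^\perp}$ is the second fundamental form of the geodesic spheres, whose shape operator $S$ obeys the matrix Riccati equation $S'+S^2+R_{\dot\gamma}=0$, where $R_{\dot\gamma}(\cdot)=R(\cdot,\dot\gamma)\dot\gamma$ has eigenvalues equal to the radial sectional curvatures $K_r$. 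A two-sided pinching of $K_r$ then pinches each eigenvalue $v$ of $S$ between solutions of the scalar Riccati equations $v'+v^2+k(r)=0$, with $k$ the upper resp. lower curvature bound; equivalently, writing $v=f'/f$, between the logarithmic derivatives of the solutions of $f''+k(r)f=0$ with $f(0)=0$, $f'(0)=1$. One checks, e.g. via $v'=-v^2-K_r$, that the upper curvature bound yields the lower Hessian bound and vice versa. These outputs have exactly the form $h_1(r)[g-dr\otimes dr]\le Hess(r)\le h_2(r)[g-dr\otimes dr]$ feeding the preceding lemma.

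For (i), with constant bounds $-\alpha^2\le K_r\le -\beta^2$, the model equations are $f''-\beta^2 f=0$ and $f''-\alpha^2 f=0$, whose normalized solutions $\beta^{-1}\sinh(\beta r)$ and $\alpha^{-1}\sinh(\alpha r)$ have logarithmic derivatives $\beta\coth(\beta r)$ and $\alpha\coth(\alpha r)$, giving the claim immediately.

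For (ii) and (iii) the coefficients are non-constant and the work shifts to estimating $f'/f$. For the lower bound in (ii) I would use $f''=-\tfrac{B}{(1+r^2)^{1+\epsilon}}f$: the coefficient is nonpositive, so $f$ is concave with $f\le r$, and $f'(r)=1-\int_0^r\tfrac{B}{(1+s^2)^{1+\epsilon}}f\,ds\ge 1-\int_0^r\tfrac{Bs}{(1+s^2)^{1+\epsilon}}\,ds\ge 1-\tfrac{B}{2\epsilon}$, whence $f'/f\ge(1-\tfrac{B}{2\epsilon})/r$; here $B<2\epsilon$ is exactly what keeps $f>0$. For the upper bound I would set $Q=rf'/f$ for $f''=\tfrac{A}{(1+r^2)^{1+\epsilon}}f$ and compute $Q'=\tfrac{Q(1-Q)}{r}+r\kappa$ with $\kappa=\tfrac{A}{(1+r^2)^{1+\epsilon}}$; a Wronskian comparison with $r$ gives $Q\ge 1$, so $Q'\le r\kappa$ and $Q(r)\le Q(0^+)+\int_0^r s\kappa\,ds=1+\tfrac{A}{2\epsilon}\le e^{A/(2\epsilon)}$, i.e. $f'/f\le e^{A/(2\epsilon)}/r$. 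For (iii) I would instead compare with the Euler equation $f''+\tfrac{c}{r^2}f=0$, whose power solutions $r^\gamma$ satisfy $\gamma(\gamma-1)+c=0$, i.e. $\gamma=\tfrac{1\pm\sqrt{1-4c}}{2}$; using $\tfrac{1}{1+r^2}\le\tfrac1{r^2}$ and running the Sturm comparison with $c=b^2$ resp. $c=-a^2$ produces the larger indicial roots $\tfrac{1+\sqrt{1-4b^2}}{2}$ and $\tfrac{1+\sqrt{1+4a^2}}{2}$, the hypothesis $b^2\le 1/4$ guaranteeing non-oscillation.

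The main obstacle lies entirely in the variable-coefficient parts (ii) and (iii): one must carry out the Riccati/Sturm comparisons with non-constant model coefficients while controlling the singular behaviour $f'/f\sim 1/r$ at the origin, and verify that each model solution stays positive on all of $(0,\infty)$ so the comparison is valid up to infinity. The constraints $B<2\epsilon$ and $b^2\le 1/4$ are precisely the non-conjugacy/non-oscillation thresholds that make this possible, and tracking the constants so as to land on the exact quantities $\tfrac{B}{2\epsilon}$, $e^{A/(2\epsilon)}$ and $\tfrac{1\pm\sqrt{1\pm 4(\cdot)}}{2}$ is the delicate bookkeeping; case (i) is then immediate.
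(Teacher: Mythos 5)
Your proposal is correct, but note that the paper itself offers no argument for this lemma: its proof consists entirely of citations --- case (i) to \cite{GW}, case (ii) to \cite{DW}, and case (iii) to \cite{GW,Kas,PRS}. What you have written is therefore a self-contained derivation of the estimates the paper imports from the literature, and it follows the same comparison-theory route those references take: reduce to the matrix Riccati equation for the shape operator of geodesic spheres, pinch its eigenvalues between scalar Riccati solutions $f'/f$ with $f''+kf=0$, and estimate $f'/f$ for each curvature bound. Your explicit ODE work checks out in all three cases: (i) is immediate from the $\sinh$ models; in (ii) the concavity argument ($f\le r$, hence $f'\ge 1-\frac{B}{2\epsilon}$, which is also exactly what keeps $f>0$ when $B<2\epsilon$) gives the lower bound, while the substitution $Q=rf'/f$ with the Wronskian fact $Q\ge 1$ and $Q'\le r\kappa$ gives $Q\le 1+\frac{A}{2\epsilon}$, which is in fact slightly sharper than the stated $e^{A/2\epsilon}$; in (iii) the comparison with the Euler equation via $\frac{1}{1+r^2}\le\frac{1}{r^2}$ and the indicial roots $\frac{1\pm\sqrt{1-4c}}{2}$ is the standard argument, and you correctly match the comparison directions (upper curvature bound yields lower Hessian bound and vice versa) and the initial behaviour at $r=0^+$, where $\frac{1+\sqrt{1-4b^2}}{2}\le 1\le \frac{1+\sqrt{1+4a^2}}{2}$ is precisely what makes the Riccati comparison start on the correct side. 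What your route buys is a verifiable, reference-free proof and a marginally better constant in (ii); what the paper buys by citing is brevity, at the cost of leaving the reader to assemble the estimates from three different sources.
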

\begin{proof}
The case (i) is standard (cf. \cite{GW}). The case (ii) is discussed in \cite{DW}.
For (iii), we refer the reader to \cite{GW,Kas,PRS}.
\end{proof}

In \cite{DW}, the authors proved that the distance function $r$ of the case (i) (resp. case (ii)) in Lemma 2.2 is a
real $p$-exhaustion function by choosing suitable $\alpha$ and $\beta$ (resp. $A$ and $B$). For the case (iii), that is,
the curvature of $M$ has quadratic decay, we have immediately from (2.12) the following:
\begin{lemma}
Let $(M, g)$ be a complete Riemannian manifold with a pole $x_0$.
If $-\frac{a^2}{1+r^2} \leq K_r \leq \frac{b^2}{1+r^2}$ with $a\geq 0$, $b^2 \in[0,1/4]$ and
 $2+(m-2)(1+\sqrt{1-4b^2})-(2p-1)(1+\sqrt{1+4a^2}) >0$, then $r(x)$  is a real $p$-exhaustion function with
 \begin{equation}
 \tag{2.17}
 \lambda = \frac{2+(m-1)(1+\sqrt{1-4b^2})-2p(1+\sqrt{1+4a^2})}{2}.
 \end{equation}
 \end{lemma}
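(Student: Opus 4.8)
The plan is to recognize this lemma as a direct specialization of the general estimate (2.12) in Lemma 2.1, with the comparison functions $h_1$ and $h_2$ supplied by part (iii) of Lemma 2.2. Concretely, to show that $r$ is a real $p$-exhaustion function I must verify the three defining conditions (2.7), (2.8) and (2.9) for $\Phi = r$ and $\Psi = r^2$, and then read off the growth order $\lambda = k_1/(2k_2)$ from (2.11).

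Conditions (2.7) and (2.8) follow purely from the pole hypothesis, independently of the curvature bound. Since $M$ is complete, $r \geq 0$ is proper and each sublevel set $\{r < t\}$ is relatively compact, so $r$ is an exhaustion function; since $x_0$ is a pole, $\exp_{x_0}$ is a diffeomorphism, whence $r^2$ is smooth on all of $M$, and $\nabla(r^2) = 2r\nabla r$ vanishes only at $x_0$, yielding a single (hence discrete) critical point.

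The substantive step is (2.9). First I would feed the quadratic-decay curvature bound into Lemma 2.2(iii), which produces
$$h_1(r) = \frac{1+\sqrt{1-4b^2}}{2r}, \qquad h_2(r) = \frac{1+\sqrt{1+4a^2}}{2r},$$
so that $rh_1(r) = \tfrac12(1+\sqrt{1-4b^2})$ and $rh_2(r) = \tfrac12(1+\sqrt{1+4a^2})$. The observation that selects the correct branch of (2.12) is that $a \geq 0$ forces $\sqrt{1+4a^2} \geq 1$, hence $rh_2(r) \geq 1$ identically on $M$; we therefore always land in the first case of (2.12). Substituting the two constants into that case gives
$$\sum_{i=1}^m \lambda_i - 2p\lambda_m \geq 2 + (m-1)(1+\sqrt{1-4b^2}) - 2p(1+\sqrt{1+4a^2}),$$
a bound independent of $x$, so that $k_1$ equals this constant. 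Because $|\nabla r| \equiv 1$ we have $k_2 = \sup_M |\nabla r|^2 = 1$, and (2.11) then delivers $\lambda = k_1/(2k_2) = k_1/2$, which is exactly (2.17). The strict positivity $k_1 > 0$ demanded by (2.9) — equivalently $\lambda > 0$ in (2.17) — is what the hypothesized curvature–dimension inequality is there to guarantee.

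The computation is routine once the inputs are assembled; the text itself flags this as "immediately from (2.12)." The only points demanding care, and hence the modest obstacles, are twofold: first, confirming that we genuinely remain in the $rh_2(r) \geq 1$ branch of (2.12) for every $r$, which rests on the elementary inequality $\sqrt{1+4a^2} \geq 1$; and second, checking that the exhaustion and smoothness properties required by (2.7)–(2.8) follow from the pole assumption rather than from the curvature hypothesis, so that these conditions are not silently taken for granted while attention is focused on the eigenvalue estimate.
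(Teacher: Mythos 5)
Your overall route is exactly the paper's: Lemma 2.3 is stated there with no separate proof beyond the remark that it follows ``immediately from (2.12)'' once the Hessian bounds of Lemma 2.2(iii) are inserted, and your execution of that specialization is largely correct. In particular, the pole hypothesis gives (2.7)--(2.8), the observation $rh_2(r)=\tfrac12\bigl(1+\sqrt{1+4a^2}\bigr)\geq 1$ correctly selects the first branch of (2.12), and with $k_2=\sup_M|\nabla r|^2=1$ you obtain $k_1\geq 2+(m-1)\bigl(1+\sqrt{1-4b^2}\bigr)-2p\bigl(1+\sqrt{1+4a^2}\bigr)$, which is the numerator of (2.17).

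The gap is in your last assertion, namely that the lemma's stated hypothesis ``is what guarantees'' $k_1>0$, equivalently positivity of (2.17). Write $c=1+\sqrt{1-4b^2}$ and $d=1+\sqrt{1+4a^2}$. The printed hypothesis is $2+(m-2)c-(2p-1)d>0$, whereas the quantity your argument needs to be positive is $2+(m-1)c-2pd$; the two differ by $d-c\geq 0$, so the printed hypothesis is the \emph{weaker} inequality and does not imply the one you need. For instance, take $m=3$, $p=1$, $a=0$, $b^2=1/4$: then $c=1$, $d=2$, the hypothesis evaluates to $1>0$, yet $2+(m-1)c-2pd=0$, so (2.17) would give growth order $\lambda=0$; moreover, on the rotationally symmetric model with radial curvature exactly $\frac{1}{4(1+r^2)}$ the orthogonal eigenvalues of $Hess(r^2)$ tend to $1$ while the radial one equals $2$, so $k_1=0$ and $r$ fails condition (2.9) --- it is not a real $1$-exhaustion function at all. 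In other words, the mismatch is present in the paper itself: the hypothesis under which the (2.12)-based derivation actually closes is $2+(m-1)\bigl(1+\sqrt{1-4b^2}\bigr)-2p\bigl(1+\sqrt{1+4a^2}\bigr)>0$, which is precisely what Theorem 3.3 assumes for the same curvature regime. You should either adopt that stronger hypothesis (your derivation is then complete) or flag the discrepancy explicitly; as printed, the positivity step cannot be justified, so your proof --- like the lemma as literally stated --- does not close.
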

 \begin{lemma}
Let $M^m$ be a complete K\"ahler manifold of complex dimension $m$ with a pole. Assume that the radial curvature of $M$
satisfies one of the following three conditions:

(i) $-\alpha^2\leq K_r \leq -\beta^2$ with  $\alpha> 0$, $\beta> 0$ and $(2m-1)\beta-2p\alpha > 0$;

(ii) $-\frac{A}{(1+r^2)^{1+\epsilon}} \leq K_r \leq \frac{B}{(1+r^2)^{1+\epsilon}}$ with $\epsilon>0$, $A \geq 0$, $0 \leq B < 2\epsilon$
and $1+ (2m-1)[1-\frac{B}{2\epsilon}]-2pe^{A/2\epsilon} > 0$;

(iii) $-\frac{a^2}{1+r^2} \leq K_r \leq \frac{b^2}{1+r^2}$ with $a\geq 0$, $b^2 \in[0,1/4]$ and
$2+ (2m-1)[1+\sqrt{1-4b^2}]-2p[1+\sqrt{1+4a^2}] > 0$.

Then $r$ is a complex $p$-exhaustion function with growth order
\begin{equation}
\tag{2.18}
\overline{\lambda}= \begin{cases}
2[m-p\frac{\alpha}{\beta}] & \text{if $K_r$ satisfies (i),} \\
1 + (2m-1)(1-\frac{B}{2\epsilon})-2pe^{A/2\epsilon}] & \text{if $K_r$ satisfies (ii),} \\
1 + \frac{(2m-1)[1+\sqrt{1-4b^2}]}{2}-p[1+\sqrt{1+4a^2}] & \text{if $K_r$ satisfies (iii).}
\end{cases}
\end{equation}
\end{lemma}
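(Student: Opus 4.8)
The plan is to verify that the distance function $r$ from the pole $x_0$ satisfies conditions (2.7), (2.8) and (2.10), and then to read off the growth order $\overline{\lambda} = \overline{k}_1/\overline{k}_2$ from these. First I would dispose of the structural conditions: since $x_0$ is a pole, $r$ is smooth on $M \setminus \{x_0\}$ and $\Psi = r^2$ is smooth on all of $M$ with $x_0$ as its only, hence discrete, critical point; completeness together with the Hopf--Rinow theorem makes each sublevel set $\{r < t\}$ a relatively compact geodesic ball, so (2.7) and (2.8) hold. Moreover $|\nabla \Phi|^2 = |\nabla r|^2 \equiv 1$, whence $\overline{k}_2 = 1$ and $\overline{\lambda} = \overline{k}_1$.

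The substance is to bound $\overline{k}_1 = \inf_M \{\sum_{i=1}^m \epsilon_i - p\epsilon_m\}$ from below, and here I would feed the Hessian comparison of Lemma 2.2 into the estimate (2.13) of Lemma 2.1. In cases (ii) and (iii) this is immediate, because the bounds of Lemma 2.2 make $rh_1(r)$ and $rh_2(r)$ constant: namely $rh_1 = 1 - B/2\epsilon$, $rh_2 = e^{A/2\epsilon}$ in case (ii), and $rh_1 = \tfrac{1}{2}(1+\sqrt{1-4b^2})$, $rh_2 = \tfrac{1}{2}(1+\sqrt{1+4a^2})$ in case (iii). Since $rh_2 \geq 1$ in both situations, the first branch of (2.13) applies and produces exactly the constant lower bounds asserted in (2.18); the stated hypotheses on the curvature constants are precisely the assertions that these constants are positive, so $\overline{k}_1 = \overline{\lambda} > 0$.

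The one case requiring real work is the pinched negative case (i), where Lemma 2.2(i) yields the $r$-dependent functions $rh_1(r) = \beta r \coth(\beta r)$ and $rh_2(r) = \alpha r \coth(\alpha r)$. Since $t \coth t \geq 1$ for $t > 0$ we have $rh_2(r) \geq 1$, so (2.13) gives $\sum_{i=1}^m \epsilon_i - p\epsilon_m \geq 1 + (2m-1)\beta r \coth(\beta r) - 2p\alpha r \coth(\alpha r)$, and the task is to minimize the right-hand side over $r > 0$. The key inequality I would use is that, because $\alpha \geq \beta$ and $\coth$ is decreasing, $\coth(\alpha r) \leq \coth(\beta r)$, hence $\alpha r \coth(\alpha r) \leq \tfrac{\alpha}{\beta}\, \beta r \coth(\beta r)$. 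Writing $w = \beta r \coth(\beta r) \geq 1$, the right-hand side is bounded below by $1 + [(2m-1) - 2p\tfrac{\alpha}{\beta}]\, w$; the coefficient of $w$ is positive precisely by the hypothesis $(2m-1)\beta - 2p\alpha > 0$, so the expression is minimized at $w = 1$, yielding the value $\overline{k}_1 \geq 2[m - p\tfrac{\alpha}{\beta}] > 0$ recorded in (2.18). This $\coth$-comparison step is the main obstacle: unlike cases (ii) and (iii), the raw estimate is not constant in $r$, and one must combine the monotonicity of $\coth$ with the elementary bound $t\coth t \geq 1$ to extract a clean constant lower bound.
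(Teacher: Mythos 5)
Your proof is correct and takes essentially the same route as the paper: feed the Hessian comparison of Lemma 2.2 into estimate (2.13) of Lemma 2.1, the only real work being case (i), where your bound $\alpha r\coth(\alpha r)\le \tfrac{\alpha}{\beta}\,\beta r\coth(\beta r)$ together with $\beta r\coth(\beta r)\ge 1$ and positivity of $(2m-1)-2p\tfrac{\alpha}{\beta}$ is exactly the paper's factorization argument. (Minor point in your favor: for cases (ii) and (iii) the paper's prose refers to ``the second case of (2.13)'', but since $rh_2(r)=e^{A/2\epsilon}\ge 1$ and $rh_2(r)=\tfrac{1}{2}[1+\sqrt{1+4a^2}]\ge 1$ respectively, it is the first branch that applies and that reproduces the constants in (2.18), as you correctly use.)
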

\begin{proof}
If $K_r$ satisfies (i), then by Lemma 2.2 and the increasing function $\beta r \coth \beta r \rightarrow 1$ as $r\rightarrow 0$,
we meet the case (i) of (2.13). Since $\beta r \coth(\beta r) > 1$ for $r>0$, $\frac{\coth (\alpha r)}{\coth (\beta r)} < 1$
for $0<\beta < \alpha$, we get
\begin{eqnarray*}
\underset{i=1}{\overset{m}{\sum}}\epsilon_i-p\epsilon_m &\geq& 1+ (2m-1)\beta r \coth(\beta r)-2p\alpha r \coth(\alpha r) \\
&=& 1+\beta r\coth(\beta r)[(2m-1)-2p\frac{\alpha r\coth(\alpha r)}{\beta r\coth(\beta r)}]\\
&\geq& 1+(2m-1)- 2p\frac{\alpha}{\beta}\\
&=& 2[m-p\frac{\alpha}{\beta}],
\end{eqnarray*}
provided that $(2m-1)\beta-2p\alpha\geq 0$.

If $K_r$ satisfies (ii) (resp. (iii)), it follows from Lemma 2.2 that we can estimate
$\underset{i=1}{\overset{m}{\sum}}\epsilon_i-p\epsilon_m $ by the second  case of (2.13).
\end{proof}
\begin{remark}
When $M$ is a K\"ahler manifold of complex dimension $m$ (i.e., real dimension $2m$), the distance function $r$ may be both
a real 2$p$-exhaustion function and a complex $p$-exhaustion function under suitable radial curvature conditions listed above. In general,
the growth order $\overline{\lambda}$ is larger than the growth order $\lambda$.
\end{remark}

\section{Monotonicity formulae and Vanishing results}

\begin{theorem}
 Let $(M, g)$ be an $m$-dimensional complete Riemannian manifold with a real $p$-exhaustion function $\Phi$
 and let $\xi:E \rightarrow M$ be a Riemannian vector bundle on $M$.  If $\omega\in A^p(\xi)$ satisfies the conservation law,
that is, $divS_{\omega}=0$, then
\begin{equation}
\frac{1}{\rho_1^ { \lambda}} \int_{B_{\Phi}(\rho_1)}
|\omega|^2dv \leq \frac{1}{\rho_2^{\lambda}}
\int_{B_{\Phi}(\rho_2)}|\omega|^2dv
\end{equation}
for any $0<\rho_1 \leq \rho_2$, where $\lambda$ is given by $\lambda=k_1/2k_2$.
\end{theorem}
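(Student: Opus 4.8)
The plan is to apply the integral formula (2.6) with the distinguished vector field $X=\nabla\Psi$, where $\Psi=\Phi^2$, taking $D=B_\Phi(t)$, and to convert the resulting identity into a differential inequality for the energy
$$I(t):=\int_{B_\Phi(t)}|\omega|^2\,dv_g.$$
Since $\omega$ satisfies the conservation law, (2.6) reads $\int_{\partial B_\Phi(t)}S_\omega(\nabla\Psi,\nu)\,ds_g=\int_{B_\Phi(t)}\langle S_\omega,\nabla\theta_{\nabla\Psi}\rangle\,dv_g=\int_{B_\Phi(t)}\langle S_\omega,Hess(\Psi)\rangle\,dv_g$, so everything reduces to bounding the two sides against $I(t)$ and $I'(t)$.

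First I would establish the pointwise lower bound $\langle S_\omega,Hess(\Psi)\rangle\geq\tfrac{k_1}{2}|\omega|^2$. Diagonalizing $Hess(\Psi)$ at a point by an orthonormal eigenbasis $\{e_i\}$ with eigenvalues $\lambda_1\leq\cdots\leq\lambda_m$ and using $S_\omega(e_i,e_j)=\tfrac{|\omega|^2}{2}\delta_{ij}-\langle i_{e_i}\omega,i_{e_j}\omega\rangle$ from (2.1), one finds
$$\langle S_\omega,Hess(\Psi)\rangle=\frac{|\omega|^2}{2}\sum_i\lambda_i-\sum_i\lambda_i\,|i_{e_i}\omega|^2.$$
The key algebraic facts are $\sum_i|i_{e_i}\omega|^2=p|\omega|^2$ (each component of a $p$-form is annihilated by all but $p$ of the interior products) and $|i_{e_i}\omega|^2\geq0$, whence $\sum_i\lambda_i|i_{e_i}\omega|^2\leq\lambda_m\sum_i|i_{e_i}\omega|^2=p\lambda_m|\omega|^2$. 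Therefore $\langle S_\omega,Hess(\Psi)\rangle\geq\tfrac{|\omega|^2}{2}\bigl(\sum_i\lambda_i-2p\lambda_m\bigr)\geq\tfrac{k_1}{2}|\omega|^2$ by condition (2.9), giving $\int_{B_\Phi(t)}\langle S_\omega,Hess(\Psi)\rangle\,dv_g\geq\tfrac{k_1}{2}I(t)$.

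Next I would bound the boundary integrand. On $\partial B_\Phi(t)=\{\Phi=t\}$ the outward unit normal is $\nu=\nabla\Psi/|\nabla\Psi|$, and the interior-product term equals $(\omega\odot\omega)(\nabla\Psi,\nu)=|i_{\nabla\Psi}\omega|^2/|\nabla\Psi|\geq0$; dropping it gives $S_\omega(\nabla\Psi,\nu)\leq\tfrac{|\omega|^2}{2}|\nabla\Psi|$. Since $\nabla\Psi=2\Phi\nabla\Phi$, on the level set $|\nabla\Psi|=2t|\nabla\Phi|$, and using $|\nabla\Phi|^2\leq k_2$ together with the coarea representation $I(t)=\int_0^t\!\bigl(\int_{\{\Phi=s\}}|\omega|^2/|\nabla\Phi|\,ds_g\bigr)ds$, so that $I'(t)=\int_{\{\Phi=t\}}|\omega|^2/|\nabla\Phi|\,ds_g$, I obtain
$$\int_{\partial B_\Phi(t)}S_\omega(\nabla\Psi,\nu)\,ds_g\leq t\int_{\{\Phi=t\}}|\omega|^2|\nabla\Phi|\,ds_g\leq tk_2\int_{\{\Phi=t\}}\frac{|\omega|^2}{|\nabla\Phi|}\,ds_g=tk_2\,I'(t).$$
Combining the two estimates through (2.6) yields $\tfrac{k_1}{2}I(t)\leq tk_2\,I'(t)$, i.e. $\tfrac{d}{dt}\log I(t)\geq\lambda/t$ with $\lambda=k_1/2k_2$; integrating over $[\rho_1,\rho_2]$ shows $t\mapsto I(t)/t^{\lambda}$ is non-decreasing, which is exactly (3.1).

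The main technical obstacle is the regularity of the level sets. Because $\Psi$ is of class $C^\infty$ with only discrete critical points (condition (2.8)), Sard's theorem guarantees that $\{\Phi=t\}$ is a $C^1$ hypersurface and $|\nabla\Phi|>0$ on it for almost every $t$, so the divergence theorem underlying (2.6) and the differentiation of $I(t)$ via coarea are both legitimate for a.e. $t$. Since $I(t)$ is monotone and absolutely continuous as an integral, the differential inequality may be integrated despite the measure-zero set of critical values, and a standard limiting argument as $\rho_1\downarrow 0$ handles the inner endpoint. I would also note that the hypothesis $m\geq2$ and $\Phi\geq0$ ensure the sublevel sets are bounded domains with the exhaustion property (2.7), so all integrals are finite.
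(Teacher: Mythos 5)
Your proof is correct and takes essentially the same approach as the paper's: the same radial vector field (your $X=\nabla\Psi$ versus the paper's $X=\tfrac{1}{2}\nabla\Psi$, an immaterial factor), the same pointwise bounds $\langle S_\omega, Hess(\Psi)\rangle \geq \tfrac{k_1}{2}|\omega|^2$ and $S_\omega(\nabla\Psi,\nu)\leq \tfrac{|\omega|^2}{2}|\nabla\Psi|$, and the same coarea-formula differential inequality integrated to give the monotonicity; your explicit use of $\sum_i |i_{e_i}\omega|^2 = p|\omega|^2$ is precisely what the paper's estimate (3.2) relies on implicitly. Your added remarks on Sard's theorem and regularity of the level sets only make explicit what the paper leaves tacit.
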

\begin{proof}
 Take $\Psi=\Phi^2$, $X=\frac{1}{2}\nabla\Psi$. Obviously $(\nabla \Psi)|_{\partial B_{\Phi}(t)} $ is an outward normal vector field
along $\partial B_{\Phi}(t)$ for a regular value $t>0$ of $\Phi$. Thus $\nabla \Psi= \sigma(x)\nu$ on $\partial B_{\Phi}(t)$ with $\sigma(x)> 0$
 for each point $x\in \partial B_{\Phi}(t)$, where $\nu$ denotes the unit outward normal vector field of $\partial B_{\Phi}(t)$.
Take an orthonormal  basis $\{e_i\}_{i=1,2\cdots m}$ which diagonalize the $Hess(\Psi)$,
then
\begin{eqnarray}
\nonumber <S_\omega,\nabla\theta_X> &=& \frac{1}{2}\overset{m}{\underset{i,j=1}{\sum}}
 S_\omega(e_i,e_j)Hess(\Psi)(e_i,e_j)\\
\nonumber &=& \frac{1}{4} \overset{m}{\underset{i,j=1}{\sum}}|\omega|^2 Hess(\Psi)(e_i,e_j)\delta_{ij}
\end{eqnarray}
\begin{eqnarray}
  \nonumber&&-\frac{1}{2}\overset{m}{\underset{i,j=1}{\sum}}(\omega \odot \omega)(e_i,e_j)Hess(\Psi)(e_i,e_j)\\
\nonumber &\geq& \frac{|\omega|^2}{4}\underset{x\in M}{inf}\{(\lambda_1+\cdots+\lambda_m)-2p\lambda_m\}\\
         &\geq& \frac{1}{4}k_1|\omega|^2.
\end{eqnarray}
By the definition of $S_{\omega}$, we have
\begin{eqnarray}
\nonumber S_{\omega}(X,\nu)&=& \frac{|\omega|^2}{2}g(X,\nu)-(\omega \odot \omega)(X,\nu)\\
 \nonumber &=& \frac{1}{2}r|\omega|^2g(\nabla \Phi,\nu)
-\frac{1}{2}\sigma(x)|i_{\nu}\omega |^2\\
&\leqslant& \frac{\sqrt{k_2}}{2}r|\omega|^2  \qquad  on \ \partial B_{\Phi}(r).
 \end{eqnarray}
Since $\omega $ satisfies the conservation law, then by (2.6), (3.2) and (3.3), we can get
 $$\frac{1}{2}k_1 \int_{ B_\Phi(r)} |\omega|^2dv \leq  \sqrt{k_2}r\int_{\partial B_\Phi(r)} |\omega|^2dv.$$
By the definition of $k_2$ and the co-area formula, we have
\begin{eqnarray*}
r\int_{\partial B_\Phi(r)} |\omega|^2 ds
 &\leq& r\sqrt{k_2}\int_{\partial B_\Phi(r)}\frac{|\omega|^2}{|\nabla \Phi|} ds \\
 &=& r\sqrt{k_2} \frac{d}{dr} \int_{B_\Phi(r)}|\omega|^2dv.
 \end{eqnarray*}
Denote by $\lambda = k_1/2k_2$, then
\begin{eqnarray*}
\lambda \int_{B_\Phi(r)}|\omega|^2dv \leq r\frac{d}{dr} \int_{B_\Phi(r)}|\omega|^2dv,
\end{eqnarray*}
thus
\begin{eqnarray*}
 \frac{\frac{d}{dr} \int_{B_\Phi(r)}|\omega|^2dv}
        {\int_{B_\Phi(r)}|\omega|^2dv}\geq
 \frac{\lambda}{r}.
\end{eqnarray*}
 Integrating the above formula on $[\rho_1,\rho_2]$, we can get $$\frac{1}{\rho_1^ { \lambda}} \int_{B_{\Phi}(\rho_1)}
|\omega|^2dv \leq \frac{1}{\rho_2^{\lambda}}
\int_{B_{\Phi}(\rho_2)}|\omega|^2dv.$$
\end{proof}

Sometimes it is useful to establish monotonicity formulae outside a compact subset on $M$.
Under an extra condition on the $p$-form $\omega$, we may  establish such a kind of monotonicity formulae.
\begin{theorem}
Let $(M, g)$ be an $m$-dimensional complete Riemannian manifold and  let $\xi:E \rightarrow M$ be a Riemannian vector bundle on $M$, $\omega\in A^p(\xi)$. Suppose  $\Phi$ is an exhaustion function on $M$ and is  a real $p$-exhaustion function on $M \setminus B_\Phi (R_0)$ for some $R_0 >0$.
 Set
 \begin{eqnarray*}
 k_1(R_0) &=& inf_{x\in M\setminus B_\Phi(R_0)}\{ \lambda_{1}(x)+\lambda_{2}(x)+\cdots+\lambda_{m}(x)-2p\lambda_{m}(x)\}, \\
 k_2(R_0) &=& \sup_{x \in M\setminus B_\Phi(R_0)}|\nabla \Phi |^2.
 \end{eqnarray*}
 If $\omega$ satisfies the conservation law, and
 \begin{equation}
 \frac{|\omega|^2}{2}\geq |i_{\nu}\omega|^2 \qquad on \ \partial B_\Phi(R_0),
 \end{equation}
 where $\nu$ denotes the unit outward normal vector field of $\partial B_\Phi(R_0)$,
 then
\begin{equation}
 \frac{1}{\rho_1^ { \lambda(R_0)}} \int_{B_{\Phi}(\rho_1)\setminus B_{\Phi}(R_0)}
|\omega|^2dv \leq \frac{1}{\rho_2^{\lambda(R_0)}}
\int_{B_{\Phi}(\rho_2)\setminus B_{\Phi}(R_0)}|\omega|^2dv
\end{equation}
for any $R_0<\rho_1 \leq \rho_2$, where $\lambda(R_0)=k_1(R_0)/2k_2(R_0).$\\
\end{theorem}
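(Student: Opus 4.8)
The plan is to run the argument of Theorem 3.1 almost verbatim, but applied to the annular domain $D=B_\Phi(\rho)\setminus B_\Phi(R_0)$ in place of the ball. First I would set $\Psi=\Phi^2$ and $X=\frac12\nabla\Psi=\Phi\nabla\Phi$, and apply the conservation-law integral formula (2.6) on $D$. The boundary $\partial D$ now splits into two pieces: the outer level set $\partial B_\Phi(\rho)$, whose outward normal for $D$ is the normal $\nu$ pointing away from the pole, and the inner level set $\partial B_\Phi(R_0)$, whose outward normal for $D$ is the opposite vector $-\nu$. Since $S_\omega$ is linear in its second argument, this produces the two-boundary identity
$$\int_{\partial B_\Phi(\rho)}S_\omega(X,\nu)\,ds-\int_{\partial B_\Phi(R_0)}S_\omega(X,\nu)\,ds=\int_D\langle S_\omega,\nabla\theta_X\rangle\,dv.$$

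Next I would reproduce the two pointwise estimates of Theorem 3.1, now with the localized constants $k_1(R_0)$ and $k_2(R_0)$. Since $D\subset M\setminus B_\Phi(R_0)$, the localized eigenvalue hypothesis gives $\langle S_\omega,\nabla\theta_X\rangle\ge\frac14 k_1(R_0)|\omega|^2$ on $D$, exactly as in (3.2), and on the outer level set $\partial B_\Phi(\rho)\subset M\setminus B_\Phi(R_0)$ one gets $S_\omega(X,\nu)\le\frac{\sqrt{k_2(R_0)}}{2}\rho|\omega|^2$ as in (3.3). The genuinely new ingredient is the inner boundary term: a direct computation using $X=\frac{\sigma}{2}\nu$ on a level set shows that on $\partial B_\Phi(R_0)$
$$S_\omega(X,\nu)=\Phi|\nabla\Phi|\left(\tfrac{|\omega|^2}{2}-|i_\nu\omega|^2\right).$$
This is precisely where hypothesis (3.4) enters: it forces $S_\omega(X,\nu)\ge0$ on the inner level set, so the term $-\int_{\partial B_\Phi(R_0)}S_\omega(X,\nu)\,ds$ is nonpositive and may be discarded while keeping the inequality pointing in the desired direction.

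Combining these three facts, I would obtain
$$\frac{k_1(R_0)}{4}\int_D|\omega|^2\,dv\le\int_{\partial B_\Phi(\rho)}S_\omega(X,\nu)\,ds\le\frac{\sqrt{k_2(R_0)}}{2}\rho\int_{\partial B_\Phi(\rho)}|\omega|^2\,ds.$$
Writing $V(\rho)=\int_{B_\Phi(\rho)\setminus B_\Phi(R_0)}|\omega|^2\,dv$ and noting that only the outer level set varies with $\rho$, the co-area formula gives $V'(\rho)=\int_{\partial B_\Phi(\rho)}|\omega|^2/|\nabla\Phi|\,ds$; together with $|\nabla\Phi|\le\sqrt{k_2(R_0)}$ on $M\setminus B_\Phi(R_0)$ this yields the differential inequality $\lambda(R_0)/\rho\le V'(\rho)/V(\rho)$ with $\lambda(R_0)=k_1(R_0)/2k_2(R_0)$, and integrating over $[\rho_1,\rho_2]$ for $R_0<\rho_1\le\rho_2$ gives (3.5).

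I expect the only real subtlety to be the orientation bookkeeping on the inner level set, namely getting the sign of the inner boundary contribution correct and recognizing that (3.4) is exactly the condition making that contribution favorable; everything else is a localized repetition of the proof of Theorem 3.1.
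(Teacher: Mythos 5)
Your proposal is correct and follows essentially the same route as the paper's own proof: apply the integral formula (2.6) on the annulus $B_\Phi(\rho)\setminus B_\Phi(R_0)$, reuse the pointwise estimates (3.2) and (3.3) with the localized constants, use hypothesis (3.4) to show the inner boundary contribution $S_\omega(X,\nu)$ is a nonnegative multiple of $S_\omega(\nu,\nu)$ and can be discarded, and finish with the co-area formula and integration of the resulting differential inequality. Your treatment of the inner boundary term (keeping the factor $\Phi|\nabla\Phi|$ explicit) is in fact slightly more careful than the paper's, but the argument is the same.
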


\begin{proof}
Take $X=\frac{1}{2}\nabla \Phi^2=\Phi \nabla \Phi$. By  the  definition of $S_\omega$ and (3.4), we get
\begin{eqnarray*}
S_\omega(\nu,\nu)&=& \frac{|\omega|^2}{2}g(\nu,\nu)-(\omega \odot \omega)(\nu,\nu)\\
  &=& \frac{1}{2}|\omega|^2-
|i_{\nu}\omega |^2\\
&\geq& 0.
\end{eqnarray*}
For any $r>R_0$, we set $D=B_\Phi(r)\backslash B_\Phi(R_0)$,
by applying the integral formula (2.6) on $D$ and using (3.2), (3.3), we have
\begin{eqnarray}
\nonumber \frac{1}{4}k_1(R_0) \int_{ B_\Phi(r)\setminus B_\Phi(R_0)} |\omega|^2dv &\leq& \int_{\partial B_\Phi(r)}S_\omega(X,\nu)ds-
\int_{\partial B_\Phi(R_0)}S_\omega(X,\nu)ds\\
\nonumber &=& \int_{\partial B_\Phi(r)}S_\omega(X,\nu)ds-R_0\int_{\partial B_\Phi(R_0)}S_\omega(\nu,\nu)ds\\
&\leq&  \frac{\sqrt{k_2(R_0)}}{2}r\int_{\partial B_\Phi(r)} |\omega|^2ds,
\end{eqnarray}
and by the co-area formula
\begin{eqnarray}
\nonumber r\int_{\partial B_\Phi(r)} |\omega|^2ds &\leq& r\sqrt{k_2{(R_0)}}\int_{\partial B_\Phi(r)}\frac{|\omega|^2}{|\nabla \Phi|} ds\\
 &\leq& r\sqrt{k_2(R_0)} \frac{d}{dr} \int_{B_\Phi(r)\setminus B_\Phi(R_0)}|\omega|^2dv.
 \end{eqnarray}
 Then (3.6) and (3.7) yield
 \begin{equation*}
 \lambda(R_0) \int_{ B_\Phi(r)\setminus B_\Phi(R_0)} |\omega|^2dv \leq r\frac{d}{dr} \int_{B_\Phi(r)\setminus B_\Phi(R_0)}|\omega|^2dv,
 \end{equation*}
 which implies that
 \begin{equation}
  \frac{d}{dr}\{r^{-\lambda(R_0)} \int_{B_\Phi(r)\setminus B_\Phi(R_0)}|\omega|^2dv\}\geq 0.
 \end{equation}
By integrating (3.8) on $[\rho_1,\rho_2]$, we can get the formula (3.5).
\end{proof}

Suppose $M$ is a submanifold described in Example 2.3 on page 7. Then we may establish the monotonicity formulae for forms on $M$. Next, Lemma 2.3 and Theorem 3.1 imply the following:
\begin{theorem}
Let $(M, g)$ be an $m$-dimensional complete Riemannian manifold with a pole $x_0$.
 Let $\xi:E \rightarrow M$ be a Riemannian vector bundle on $M$ and  $\omega\in A^p(\xi)$. Assume that the radial curvature
$-\frac{a^2}{1+r^2} \leq K_r \leq \frac{b^2}{1+r^2}$ with $a\geq 0$, $b^2 \in[0,1/4]$ and
$2+(m-1)(1+\sqrt{1-4b^2})-2p(1+\sqrt{1+4a^2}) >0$.
If $\omega$ satisfies the conservation law, then
 \begin{equation*}
\frac{1}{\rho_1^ { \lambda}} \int_{B_{\rho_1}(x_0)}
|\omega|^2dv \leq \frac{1}{\rho_2^{\lambda}}
\int_{B_{\rho_2}(x_0)}|\omega|^2dv
\end{equation*}
for any $0< \rho_1\leq \rho_2$, where $\lambda$ is given by
\begin{equation}
 \lambda =\frac{2+(m-1)(1+\sqrt{1-4b^2})-2p(1+\sqrt{1+4a^2})}{2}.
 \end{equation}
\end{theorem}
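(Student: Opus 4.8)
The plan is to read this off as a direct consequence of Lemma 2.3 together with Theorem 3.1: under the stated quadratic curvature pinching, the distance function $r$ from the pole $x_0$ is itself a real $p$-exhaustion function, after which the monotonicity formula of Theorem 3.1 applies verbatim with $\Phi=r$. So the only real work is to check that the positivity hypothesis assumed here is strong enough to invoke Lemma 2.3, and then to match up notation.

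First I would verify that the hypotheses of Lemma 2.3 are met. Writing $A=1+\sqrt{1-4b^2}$ and $B=1+\sqrt{1+4a^2}$, the positivity assumed in the theorem is $2+(m-1)A-2pB>0$, whereas Lemma 2.3 requires $2+(m-2)A-(2p-1)B>0$. Since $b^2\in[0,1/4]$ forces $A\leq 2$ while $a\geq 0$ forces $B\geq 2$, we have $A\leq B$, and therefore
\[
\bigl[2+(m-2)A-(2p-1)B\bigr]-\bigl[2+(m-1)A-2pB\bigr]=B-A\geq 0.
\]
Thus the quantity appearing in Lemma 2.3 dominates the one assumed here, so the theorem's hypothesis $2+(m-1)A-2pB>0$ already guarantees $2+(m-2)A-(2p-1)B>0$, and Lemma 2.3 is applicable.

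With the hypotheses of Lemma 2.3 in force, that lemma asserts that $r$ is a real $p$-exhaustion function whose growth order is exactly $\lambda=\tfrac{1}{2}\bigl[2+(m-1)A-2pB\bigr]$, the constant displayed in the theorem. I would then apply Theorem 3.1 with $\Phi=r$. Because $\Phi=r$, each sublevel set $B_\Phi(\rho)=\{r<\rho\}$ is precisely the geodesic ball $B_\rho(x_0)$, so the conclusion of Theorem 3.1 reads off directly as the claimed inequality for all $0<\rho_1\leq\rho_2$.

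The only point that genuinely requires attention is the passage between the two slightly different positivity conditions, which is settled by the elementary bounds $A\leq 2\leq B$ recorded above; once $r$ is recognized as a real $p$-exhaustion function, the monotonicity is a direct quotation of Theorem 3.1, so I anticipate no substantive obstacle beyond this comparison of hypotheses.
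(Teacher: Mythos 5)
Your proof is correct and follows exactly the paper's route: the paper obtains this theorem by simply invoking Lemma 2.3 together with Theorem 3.1 (with $\Phi=r$, so that $B_\Phi(\rho)=B_\rho(x_0)$), which is precisely what you do. Your explicit check that the stated hypothesis $2+(m-1)A-2pB>0$ implies Lemma 2.3's condition $2+(m-2)A-(2p-1)B>0$, via the elementary bounds $A=1+\sqrt{1-4b^2}\leq 2\leq 1+\sqrt{1+4a^2}=B$, is a detail the paper leaves implicit, and you carry it out correctly.
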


\begin{definition}
A $p$-form $\omega \in A^p(\xi)$ is called $J$-invariant (or pluriconformal) if $\omega$ satisfies :\\
   $$ (\omega  \odot \omega)(JX,JY) =(\omega  \odot \omega)(X,Y)$$
   for $\forall X,Y \in T_xM$.\\
\end{definition}
Now we consider $J$-invariant $p$-forms on K\"ahler manifolds. The work in \cite{Do} suggests that a complex $p$-exhaustion function is suitable
to estimate the growth order of a $J$-invariant $p$-form which satisfies the conservation law.
 \begin{theorem}
  Let $M$ be a complete K\"ahler manifold which posses a complex $p$-exhaustion function $\Phi$.
  Let $\xi: E \rightarrow M$ be a Riemannian vector bundle on $M$. If $\omega \in A^p(\xi)$
  is $J$-invariant and satisfies the conservation law, then
   $$\frac{1}{\rho_1^{\overline{\lambda}}}\int_{B_\Phi (\rho_1)}|\omega|^2dv \leq
   \frac{1}{\rho_2^{\overline{\lambda}}}\int_{B_\Phi (\rho_2)}|\omega|^2dv$$
   for any $0<\rho_1 \leq \rho_2$, where $\overline{\lambda}=\overline{k}_1/\overline{k}_2$.
\end{theorem}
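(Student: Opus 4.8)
The plan is to follow the proof of Theorem 3.1 almost line by line, the only genuinely new ingredient being the use of the $J$-invariance of $\omega$ to replace the real Hessian of $\Psi=\Phi^2$ by the complex Hessian $H(\Psi)=(\Psi_{i\bar j})$. As before I would set $\Psi=\Phi^2$ and $X=\tfrac12\nabla\Psi=\Phi\nabla\Phi$, so that $\nabla\theta_X=\tfrac12 Hess(\Psi)$, and take the integral formula (2.6) on $D=B_\Phi(\rho)$ as the backbone. The boundary estimate is unchanged from (3.3): on $\partial B_\Phi(\rho)$ one has $X=\rho|\nabla\Phi|\,\nu$, whence $S_\omega(X,\nu)=\tfrac{|\omega|^2}{2}\rho|\nabla\Phi|-\rho|\nabla\Phi|\,|i_\nu\omega|^2\le \tfrac{\sqrt{\bar k_2}}{2}\rho|\omega|^2$, using $|\nabla\Phi|\le\sqrt{\bar k_2}$.

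The heart of the matter is the lower bound for $\langle S_\omega,\nabla\theta_X\rangle$, and this is where $J$-invariance enters. First I would observe that $J$-invariance of $\omega$ forces the stress--energy tensor itself to be $J$-invariant: since $J$ is a $g$-isometry, $S_\omega(JX,JY)=\tfrac{|\omega|^2}{2}g(X,Y)-(\omega\odot\omega)(JX,JY)=S_\omega(X,Y)$. Because the tensor inner product is frame-independent and $\{Jf_A\}$ is orthonormal whenever $\{f_A\}$ is, this identity gives $\langle S_\omega,Hess(\Psi)(J\cdot,J\cdot)\rangle=\langle S_\omega,Hess(\Psi)\rangle$, so that $Hess(\Psi)$ may be replaced by its $J$-invariant part $\widetilde H(\cdot,\cdot)=\tfrac12[Hess(\Psi)(\cdot,\cdot)+Hess(\Psi)(J\cdot,J\cdot)]$ without changing the contraction. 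The point of this replacement is that $\widetilde H$ is $J$-invariant and symmetric, hence diagonalizable in a $J$-adapted orthonormal frame $\{e_\alpha,Je_\alpha\}_{\alpha=1}^m$ (its eigenspaces being $J$-invariant), and from $\Psi_{\alpha\bar\alpha}=\tfrac12[Hess(\Psi)(e_\alpha,e_\alpha)+Hess(\Psi)(Je_\alpha,Je_\alpha)]=\widetilde H(e_\alpha,e_\alpha)=\widetilde H(Je_\alpha,Je_\alpha)$ one sees that its eigenvalues are exactly the complex-Hessian eigenvalues $\epsilon_1\le\cdots\le\epsilon_m$, each appearing twice.

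In this frame the estimate reduces to a copy of (3.2). Writing $S_\omega(f_A,f_A)=\tfrac{|\omega|^2}{2}-|i_{f_A}\omega|^2$ and using the two trace identities $\sum_A\widetilde H(f_A,f_A)=2\sum_{\alpha}\epsilon_\alpha$ and $\sum_A|i_{f_A}\omega|^2=p|\omega|^2$, together with $\epsilon_\alpha\le\epsilon_m$, I would obtain
\begin{equation*}
\langle S_\omega,Hess(\Psi)\rangle=\langle S_\omega,\widetilde H\rangle\ge |\omega|^2\Big[\sum_{\alpha=1}^m\epsilon_\alpha-p\,\epsilon_m\Big]\ge \bar k_1|\omega|^2,
\end{equation*}
and hence $\langle S_\omega,\nabla\theta_X\rangle\ge\tfrac12\bar k_1|\omega|^2$, the analogue of (3.2). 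Feeding this together with the boundary bound into (2.6) gives $\bar k_1\int_{B_\Phi(\rho)}|\omega|^2\,dv\le\sqrt{\bar k_2}\,\rho\int_{\partial B_\Phi(\rho)}|\omega|^2\,ds$; the coarea formula $\tfrac{d}{d\rho}\int_{B_\Phi(\rho)}|\omega|^2\,dv=\int_{\partial B_\Phi(\rho)}\tfrac{|\omega|^2}{|\nabla\Phi|}\,ds$ and $|\nabla\Phi|\le\sqrt{\bar k_2}$ then yield $\tfrac{d}{d\rho}\log\int_{B_\Phi(\rho)}|\omega|^2\,dv\ge\bar\lambda/\rho$ with $\bar\lambda=\bar k_1/\bar k_2$, and integrating over $[\rho_1,\rho_2]$ delivers the monotonicity formula.

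I expect the only real obstacle to be the bookkeeping in the $J$-invariance step: verifying that passing to $\widetilde H$ kills the off-diagonal contributions of $S_\omega$, checking that the doubled eigenvalues of $\widetilde H$ indeed match $H(\Psi)$, and keeping track of the normalization constants (the $2m$ summands in the K\"ahler setting versus $m$ in Theorem 3.1, and the factor from $X=\tfrac12\nabla\Psi$) so that the constant $\bar k_1$ of (2.10) --- with its coefficient $p$ rather than the $2p$ of (2.9) --- emerges correctly. Everything downstream of the lower bound is identical to the proof of Theorem 3.1.
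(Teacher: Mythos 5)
Your proposal is correct and takes essentially the same route as the paper's proof of Theorem 3.4: the same choice $X=\tfrac12\nabla\Psi$, the same boundary estimate, the reduction of $\langle S_\omega,\nabla\theta_X\rangle$ to the bound $\tfrac12|\omega|^2\bigl(\sum_{i}\epsilon_i-p\,\epsilon_m\bigr)\ge\tfrac12\overline{k}_1|\omega|^2$ in a frame tied to the complex Hessian, and the identical coarea/integration finish. The only (cosmetic) difference is in the bookkeeping of $J$-invariance: you use it once, to replace $Hess(\Psi)$ by its $J$-symmetrized part $\widetilde H$ and then diagonalize, whereas the paper first diagonalizes the complex Hessian (its equations (3.12)--(3.13)) and then uses $J$-invariance to cancel the cross terms in the expansion of $\langle\omega\odot\omega,Hess(\Psi)\rangle$ (its (3.14)) --- the same cancellation, organized in the opposite order.
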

\begin{proof}
Take $X=\frac{1}{2}\nabla \Phi^2=\Phi \nabla \Phi$.
Obviously $(\nabla \Psi)|_{\partial B_{\Phi}(t)} $ is an outward normal vector field
along $\partial B_{\Phi}(t)$ for a regular value $t>0$ of $\Phi$. Thus $\nabla \Psi= \psi(x)\nu$ on $\partial B_{\Phi}(t)$ with $\psi(x)>0$ for
each point $x\in \partial B_{\Phi}(t)$, where $\nu$ denotes the unit outward normal vector field of $\partial B_{\Phi}(t)$.
\begin{eqnarray}
\nonumber S_\omega (X,\nu)&=& \frac{|\omega|^2}{2}<X,\nu>_g -<i_X \omega,i_\nu \omega>_g\\
\nonumber &=& t\frac{|\omega|^2}{2}<\nabla \Phi,\nu>_g -\psi |i_\nu \omega|^2_g\\
   &\leq& \frac{t\sqrt{\overline{k}_2}}{2}|\omega|^2
\end{eqnarray}
on $ \partial B_{\Phi}(t)$ and
\begin{eqnarray}
\nonumber <S_\omega,\nabla \theta_X> &=& \frac{1}{2}<S_\omega,Hess(\Psi)>\\
   &=& \frac{1}{2}[\frac{|\omega|^2}{2}\Delta_g \Psi - <\omega \odot \omega,Hess(\Psi)>],
\end{eqnarray}
  where $\Delta_g$ denote the Laplace-Beltrami operator on $M$. We choose a unitary basis $\{\eta_i=(e_i-iJe_i)/\sqrt{2}\}_{i=1,\cdots,m}$
  at a point $p\in  B_\Phi(t)$ such that
  $$Hess(\Psi)(\eta_i,\overline{\eta}_j(p))=\epsilon_i\delta_{ij},$$
  which is equivalent to
 \begin{eqnarray}
\nonumber Hess(\Psi)(e_i,e_j)+Hess(\Psi)(Je_i,Je_j)&=& 2\epsilon_i\delta_{ij},\\
 Hess(\Psi)(e_i,Je_j)-Hess(\Psi)(Je_i,e_j)&=&0.
 \end{eqnarray}
  Obviously $\{e_i,Je_i\}_{i=1,\cdots,m}$ is an orthonormal basis. Then
\begin{equation}
\Delta_g\Psi=2\overset{m}{\underset{i=1}{\sum}}\epsilon_i.
\end{equation}
Since $\omega$ is $J$-invariant, we have
  \begin{eqnarray}
 && \langle \omega \odot \omega,Hess(\Psi) \rangle\\
\nonumber&=& \underset{i,j}{\sum} [\langle i_{e_i}\omega,i_{e_j}\omega \rangle Hess(\Psi)(e_i,e_j)
               + \langle i_{Je_i}\omega,i_{Je_j}\omega \rangle Hess(\Psi)(Je_i,Je_j)\\
\nonumber && + \langle i_{e_i}\omega,i_{Je_j}\omega \rangle Hess(\Psi)(e_i,Je_j)
             + \langle i_{Je_i}\omega,i_{e_j}\omega \rangle Hess(\Psi)(Je_i,e_j)]\\
\nonumber &=& \underset{i,j}{\sum} \langle i_{e_i}\omega,i_{e_j}\omega \rangle [Hess(\Psi)(e_i,e_j)+ Hess(\Psi)(Je_i,Je_j)]\\
\nonumber &&+ \langle i_{e_i}\omega,i_{Je_j}\omega \rangle [Hess(\Psi)(e_i,Je_j)-Hess(\Psi)(Je_i,e_j)]\\
\nonumber &=& 2\underset{i,j}{\sum}|i_{e_i}\omega|^2\epsilon_i.
\end{eqnarray}
From (3.11), (3.13) and (3.14), we obtain
\begin{eqnarray}
\nonumber<S_\omega,\nabla\theta_X>&=& \frac{1}{2} \left[\frac{|\omega|^2}{2}(2\overset{m}{\underset{i=1}{\sum}}\epsilon_i)-
     2\overset{m}{\underset{i=1}{\sum}}|i_{e_i}\omega|^2 \epsilon_i\right]\\
\nonumber &\geq& \frac{1}{2}\left[|\omega|^2 \overset{m}{\underset{i=1}{\sum}}\epsilon_i-
     2\overset{m}{\underset{i=1}{\sum}}|i_{e_i}\omega|^2 \epsilon_m \right]\\
\nonumber &=&  \frac{1}{2}\left[|\omega|^2 \overset{m}{\underset{i=1}{\sum}}\epsilon_i-p|\omega|^2 \epsilon_m\right]\\
\nonumber &=& \frac{1}{2}|\omega|^2 (\overset{m}{\underset{i=1}{\sum}}\epsilon_i-p\epsilon_m)\\
&\geq& \frac{1}{2}\overline{k}_1|\omega|^2,
\end{eqnarray}
where we have used the fact that $2\sum_i|i_{e_i}\omega|^2=p|\omega|^2$.\\
 Since $divS_\omega=0$, it follows from (2.6), (3.10) and (3.15) that
 \begin{equation*}
 \frac{\sqrt{\overline{k}_2}}{2}t\int_{\partial B_\Phi(t)}|\omega|^2 ds \geq \frac{\overline{k}_1}{2}\int_{B_\Phi(t)}|\omega|^2dv,
 \end{equation*}
and  the coarea formula  yields
\begin{equation*}
\frac{d}{dt}\int_{B_\Phi(t)}|\omega|^2 dv  \geq \frac{1}{\sqrt{\overline{k}_2}}\int_{\partial B_\Phi(t)}|\omega|^2 dv.
\end{equation*}
  Hence
 \begin{equation}
 \frac{\frac{d}{dt}\int_{B_\Phi(t)}|\omega|^2 dv}{\int_{B_\Phi(t)}|\omega|^2dv}\geq \frac{\overline{\lambda}}{t}.
 \end{equation}
 This theorem  follows immediately from integrating (3.16) on $[\rho_1, \rho_2]$.
\end{proof}

\begin{theorem}
 Let $M$ be a complete K\"ahler manifold. Suppose  $\Phi$ is an exhaustion function on $M$ and is  a complex $1$-exhaustion function on $M \setminus B_\Phi (R_0)$ for some $R_0 >0$.
 Set
 \begin{eqnarray*}
 \overline{k_1}(R_0) &=& \underset{x\in M\setminus B_\Phi(R_0)}{inf}\{\overset{m}{\underset{i=1}{\sum}}\epsilon_i(x)-\epsilon_m(x)\}, \\
 \overline{k_2}(R_0) &=& \sup_{x \in M\setminus B_\Phi(R_0)}|\nabla \Phi |^2.
 \end{eqnarray*}
   If $\omega \in A^1(\xi)$ is $J$-invariant and satisfies the conservation law, then
   $$\frac{1}{\rho_1^{\overline{\lambda}(R_0)}}\int_{B_\Phi (\rho_1)\setminus B_\Phi (R_0)}|\omega|^2 dv \leq
   \frac{1}{\rho_2^{\overline{\lambda}(R_0)}}\int_{B_\Phi (\rho_2)\setminus B_\Phi (R_0)}|\omega|^2 dv$$
   for any $R_0 <\rho_1 \leq \rho_2$, where $\overline{\lambda}(R_0)=\overline{k}_1(R_0)/\overline{k}_2(R_0).$
\end{theorem}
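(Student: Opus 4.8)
The plan is to combine the annular integration technique of Theorem 3.2 with the complex-Hessian computation underlying Theorem 3.4, now specialized to $p=1$. As in those proofs I would take $X=\frac{1}{2}\nabla\Phi^2=\Phi\nabla\Phi$ and apply the integral formula (2.6) not on a ball but on the annular domain $D=B_\Phi(r)\setminus B_\Phi(R_0)$ for $r>R_0$. Since $\Phi$ is only assumed to be a complex $1$-exhaustion function on $M\setminus B_\Phi(R_0)$, all the infima and suprema defining $\overline{k}_1(R_0)$ and $\overline{k}_2(R_0)$ are taken over this exterior region, which is exactly where the integrand lives.

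The two interior and outer estimates are inherited almost verbatim from the proof of Theorem 3.4. First, the $J$-invariance of $\omega$ together with the identity $\Delta_g\Psi=2\sum_i\epsilon_i$ and the cancellation in (3.14) yields the interior bound $\langle S_\omega,\nabla\theta_X\rangle\geq\frac{1}{2}|\omega|^2(\sum_i\epsilon_i-\epsilon_m)\geq\frac{1}{2}\overline{k}_1(R_0)|\omega|^2$ on $D$, where I use $2\sum_i|i_{e_i}\omega|^2=p|\omega|^2=|\omega|^2$ for a $1$-form. Second, on the outer boundary $\partial B_\Phi(r)$ the computation (3.10) gives $S_\omega(X,\nu)\leq\frac{r\sqrt{\overline{k}_2(R_0)}}{2}|\omega|^2$.

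The step that actually makes the exterior statement work, and the one I regard as the crux, is the treatment of the inner boundary $\partial B_\Phi(R_0)$, where the outward normal of $D$ is $-\nu$. There I would show that the extra hypothesis (3.4) imposed by hand in Theorem 3.2 is automatic here: for a $J$-invariant $1$-form the relation $(\omega\odot\omega)(J\nu,J\nu)=(\omega\odot\omega)(\nu,\nu)$ gives $|i_{J\nu}\omega|^2=|i_\nu\omega|^2$, and completing $\{\nu,J\nu\}$ to a real orthonormal frame together with $\sum_a|i_{e_a}\omega|^2=|\omega|^2$ forces $2|i_\nu\omega|^2\leq|\omega|^2$. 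Hence $S_\omega(\nu,\nu)=\frac{|\omega|^2}{2}-|i_\nu\omega|^2\geq0$ on $\partial B_\Phi(R_0)$, and since $X=R_0|\nabla\Phi|\nu$ there, the inner boundary integral $\int_{\partial B_\Phi(R_0)}S_\omega(X,\nu)\,ds=R_0\int_{\partial B_\Phi(R_0)}|\nabla\Phi|\,S_\omega(\nu,\nu)\,ds\geq0$ is subtracted in (2.6), which only strengthens the inequality. This is precisely why no boundary condition analogous to (3.4) appears in the hypotheses and why the theorem is stated for $1$-forms.

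With the inner term discarded, $\frac{1}{2}\overline{k}_1(R_0)\int_D|\omega|^2\,dv\leq\int_{\partial B_\Phi(r)}S_\omega(X,\nu)\,ds\leq\frac{r\sqrt{\overline{k}_2(R_0)}}{2}\int_{\partial B_\Phi(r)}|\omega|^2\,ds$. Applying the coarea formula to rewrite the boundary integral as $\sqrt{\overline{k}_2(R_0)}\,\frac{d}{dr}\int_D|\omega|^2\,dv$ (the inner radius being fixed, the $r$-derivative of $\int_D$ equals that of $\int_{B_\Phi(r)}$) leads to $\overline{\lambda}(R_0)\int_D|\omega|^2\,dv\leq r\frac{d}{dr}\int_D|\omega|^2\,dv$, i.e. $\frac{d}{dr}\{r^{-\overline{\lambda}(R_0)}\int_{B_\Phi(r)\setminus B_\Phi(R_0)}|\omega|^2\,dv\}\geq0$. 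Integrating over $[\rho_1,\rho_2]$ then gives the asserted monotonicity. The only genuine obstacle is the inner-boundary sign analysis above; everything else is careful bookkeeping of the Theorem 3.4 estimates on an annulus rather than a ball.
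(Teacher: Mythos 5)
Your proposal is correct and follows essentially the same route as the paper: same vector field $X=\Phi\nabla\Phi$, same application of (2.6) on the annulus $B_\Phi(r)\setminus B_\Phi(R_0)$, the interior and outer-boundary estimates imported from Theorem 3.4, and the same key observation — the paper's inequality (3.17) — that $J$-invariance of a $1$-form forces $S_\omega(\nu,\nu)\geq 0$ on the inner boundary, so no analogue of hypothesis (3.4) is needed. The paper merely states this more tersely ("arguing in a similar way as in Theorem 3.2 and Theorem 3.4, considering (3.17)"), while you have written out the details, including the correct sign bookkeeping at the inner boundary.
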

\begin{proof}
Take $\Psi=\Phi^2,X=\frac{1}{2}\nabla\Psi$.
For $\omega \in A^1(\xi)$, by the $J$-invariant of $\omega$, we have
\begin{eqnarray}
\nonumber S_\omega(\nu,\nu)&=& \frac{|\omega|^2}{2}g(\nu,\nu)-(\omega \odot \omega)(\nu,\nu)\\
 \nonumber  &\geq& \frac{1}{2}[<\omega(\nu),\omega(\nu)>+<\omega(J\nu),\omega(J\nu)>]-<\omega(\nu),\omega(\nu)>\\
&\geq& 0.
\end{eqnarray}
For any $R>R_0$, we set $D=B_\Phi(R)\backslash B_\Phi(R_0)$,
by applying the integral formula(2.6) on $D$ and arguing
in a similar way as in Theorem 3.2 and Theorem 3.4, considering (3.17), we can get the proof of this theorem.
\end{proof}

From Example 2.4 and Theorem 3.4, we immediately have the following.
\begin{theorem}
Let $M^m$  be a complex $m$-dimensional Stein manifold with metric and exhaustion function $\Phi$  as in Example 2.4. Suppose $\xi: E \rightarrow M$ is a Riemannian vector bundle
on $M$ and $\omega \in A^p(\xi)$ with $p<m$ is $J$-invariant. If $\omega$ satisfies the conservation law, then
\begin{equation*}
\frac{1}{\rho_1^{2(m-p)}}\int_{B_{\Phi}(\rho_1)}|\omega|^2 dv \leq \frac{1}{\rho_2^{2(m-p)}}\int_{B_{\Phi}(\rho_2)}|\omega|^2 dv
\end{equation*}
for any $0 < \rho_1 \leq \rho_2$.
\end{theorem}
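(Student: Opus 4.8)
The plan is to read the result off directly from Theorem 3.4 once the constants from Example 2.4 are assembled. First I would recall the setup of Example 2.4: the Stein manifold $M^m$ is realized as a closed complex submanifold of some $C^N$ via a proper holomorphic map $\psi$, and $M$ carries the metric induced by $\psi$ (which is Kähler, being the induced metric of a complex submanifold of the flat Kähler manifold $C^N$). With $F=\|z\|^2$ and $\Psi=\Phi^2=F\circ\psi$, the composition formula for the Hessian together with the complex-submanifold condition forces the eigenvalues of the complex Hessian $H(\Psi)$ to be $\epsilon_1=\cdots=\epsilon_m=2$. Hence, for $p<m$,
$$\overline{k}_1=\inf_{x\in M}\Big\{\sum_{i=1}^m\epsilon_i(x)-p\epsilon_m(x)\Big\}=2(m-p)>0,\qquad \overline{k}_2=\sup_{x\in M}|\nabla\Phi|^2\le 1,$$
so $\Phi$ is a genuine complex $p$-exhaustion function. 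Because $\psi$ is proper, the embedding is closed and the induced metric is complete, so $M$ satisfies all hypotheses of Theorem 3.4.

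Next I would apply Theorem 3.4 to the $J$-invariant $p$-form $\omega$ satisfying the conservation law. Writing $f(\rho)=\int_{B_\Phi(\rho)}|\omega|^2\,dv$, the theorem gives that $\rho^{-\overline{\lambda}}f(\rho)$ is nondecreasing on $(0,\infty)$, where the growth order is $\overline{\lambda}=\overline{k}_1/\overline{k}_2=2(m-p)/\overline{k}_2$. Since $\overline{k}_2\le 1$, this only tells us directly that $\overline{\lambda}\ge 2(m-p)$, whereas the exponent appearing in the statement is exactly $2(m-p)$.

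The one point needing a short argument is therefore this exponent reduction. To bridge the gap I would note that $f$ is nonnegative and nondecreasing, since the sublevel sets $B_\Phi(\rho)$ are nested, and write
$$\rho^{-2(m-p)}f(\rho)=\rho^{\,\overline{\lambda}-2(m-p)}\cdot\big(\rho^{-\overline{\lambda}}f(\rho)\big).$$
Both factors on the right are nonnegative and nondecreasing — the first because $\overline{\lambda}-2(m-p)\ge 0$, the second by Theorem 3.4 — so their product is nondecreasing, and evaluating at $\rho_1\le\rho_2$ yields exactly the claimed inequality. I do not expect any real obstacle: all the analytic content lives in Theorem 3.4 and in the eigenvalue computation of Example 2.4, and the only mild subtlety is the elementary monotone-product observation just described.
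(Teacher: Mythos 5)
Your proposal is correct and follows exactly the paper's route: the paper's proof of this theorem is simply the observation that Example 2.4 makes $\Phi$ a complex $p$-exhaustion function with $\overline{\lambda}=\overline{k}_1/\overline{k}_2\geq 2(m-p)$, so Theorem 3.4 applies. Your additional monotone-product step, reducing the exponent from $\overline{\lambda}$ down to $2(m-p)$, correctly fills in the small detail that the paper leaves implicit when it passes from the growth order $\overline{\lambda}\geq 2(m-p)$ to the stated inequality.
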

\begin{theorem}
  Let $M^m$ $(dim_C M =m)$ be a complete K\"ahler manifold with a pole $x_0$. Let $\xi: E \rightarrow M$ be a Riemannian vector bundle on $M$.
  Assume that the radial curvature $K_r$ of $M$ satisfies
  one of the following three conditions:

  (i) $-\alpha^2\leq K_r \leq -\beta^2$ with  $\alpha> 0$, $\beta> 0$ and $(2m-1)\beta-2p\alpha \geq 0$;

(ii) $-\frac{A}{(1+r^2)^{1+\epsilon}} \leq K_r \leq \frac{B}{(1+r^2)^{1+\epsilon}}$ with $\epsilon>0$, $A \geq 0$, $0 \leq B < 2\epsilon$
and $1+ (2m-1)[1-\frac{B}{2\epsilon}]-2pe^{A/2\epsilon} > 0$;

(iii) $-\frac{a^2}{1+r^2} \leq K_r \leq \frac{b^2}{1+r^2}$ with $a\geq 0$, $b^2 \in[0,1/4]$ and
$2+ (2m-1)[1+\sqrt{1-4b^2}]-2p[1+\sqrt{1+4a^2}] > 0$.

   If $\omega \in A^p(\xi)$ is $J$-invariant and satisfies the conservation law, then
   $$\frac{1}{\rho_1^{\overline{\lambda}}}\int_{B_{\rho_1}(x_0)}|\omega|^2 dv \leq \frac{1}{\rho_2^{\overline{\lambda}}}
   \int_{B_{\rho_2}(x_0)}|\omega|^2 dv$$
   for any $0<\rho_1 \leq \rho_2$, where
\begin{equation}
\overline{\lambda}= \begin{cases}
2[m-p\frac{\alpha}{\beta}] & \text{if $K_r$ satisfies (i),} \\
1 + (2m-1)(1-\frac{B}{2\epsilon})-2pe^{A/2\epsilon}] & \text{if $K_r$ satisfies (ii),} \\
1 + \frac{(2m-1)[1+\sqrt{1-4b^2}]}{2}-p[1+\sqrt{1+4a^2}] & \text{if $K_r$ satisfies (iii).}
\end{cases}
\end{equation}
\end{theorem}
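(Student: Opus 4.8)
The plan is to recognize Theorem 3.7 as a direct corollary of Theorem 3.4, once we verify via Lemma 2.4 that the distance function $r$ from the pole $x_0$ serves as a complex $p$-exhaustion function on all of $M$. So the whole proof amounts to checking that the hypotheses of Theorem 3.4 are met with the choice $\Phi = r$, and then reading off its conclusion.

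First I would set $\Phi = r$ and $\Psi = \Phi^2 = r^2$. Because $M$ has a pole at $x_0$, the exponential map $\exp_{x_0}$ is a diffeomorphism, so $r$ is smooth on $M \setminus \{x_0\}$ and $\Psi = r^2$ is smooth on all of $M$ with $x_0$ as its unique critical point; in particular the critical set is discrete, so condition (2.8) holds. Completeness of $M$ guarantees that $r$ is a nonnegative exhaustion function, so (2.7) holds and the sublevel sets $B_\Phi(\rho) = \{r < \rho\}$ coincide with the geodesic balls $B_\rho(x_0)$.

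Next I would invoke Lemma 2.4. Each of the three radial-curvature hypotheses (i), (ii), (iii) in the statement is exactly the hypothesis of the corresponding case of Lemma 2.4, and the accompanying positivity conditions ensure $\overline{k}_1 = \inf_M(\sum_i \epsilon_i - p\epsilon_m) > 0$. Hence Lemma 2.4 certifies that $r$ is a complex $p$-exhaustion function with growth order given by (2.18). Since $|\nabla r| = 1$ we have $\overline{k}_2 = 1$ and therefore $\overline{\lambda} = \overline{k}_1/\overline{k}_2 = \overline{k}_1$, which is precisely the piecewise expression for $\overline{\lambda}$ asserted in the theorem.

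Finally, having established that $\Phi = r$ is a complex $p$-exhaustion function and recalling that $\omega$ is $J$-invariant and satisfies the conservation law, I would apply Theorem 3.4 verbatim with this $\Phi$; rewriting $B_\Phi(\rho)$ as the geodesic ball $B_\rho(x_0)$ yields the stated monotonicity formula. The only point demanding care is the bookkeeping in passing from the curvature hypotheses to the positivity of $\overline{k}_1$ and to the exact form of $\overline{\lambda}$ in each case, i.e. making sure the case split in Lemma 2.4 (whether $rh_1(r) \geq 1$ or $rh_2(r) < 1$, after substituting the comparison functions $h_1, h_2$ supplied by Lemma 2.2) reproduces the claimed growth orders. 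I expect this verification, rather than any new analytic estimate, to be the main and essentially routine obstacle.
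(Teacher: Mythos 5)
Your proposal is correct and follows exactly the paper's own route: the paper proves Theorem 3.7 precisely by combining Lemma 2.4 (which certifies that the distance function $r$ from the pole is a complex $p$-exhaustion function with growth order (2.18)) with Theorem 3.4. Your additional bookkeeping --- checking (2.7), (2.8), $|\nabla r| = 1$, and that $B_\Phi(\rho) = B_\rho(x_0)$ --- is exactly the routine verification the paper leaves implicit.
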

\begin{proof}
It is a direct proof by Lemma 2.4 and Theorem 3.4.
\end{proof}

Now we give some concrete examples of $J$-invariant forms which satisfy the
conservation law.

\textbf{Example 3.1}.
 Let $f : (M, g, J)\rightarrow N$ be a smooth map from a K\"ahler manifold. We say
that $f$ is pluriconformal if $< df(JX), df(JY)>=< df(X), df(Y)>$. Hence the
differential $df$ of a pluriconformal harmonic map is a $J$-invariant $1$-form with
value in $f^{-1}TN$ which satisfies the conservation law.

\textbf{Example 3.2}.  Let $f : (M, g, J) \rightarrow (N, h, J')$ be a smooth map between two K\"ahler
manifolds. Define two $1$-forms $\sigma$, $\tau \in A^1(f^{-1}TN)$
\begin{equation*}
\sigma(X) = \frac{df(X)+J'df(JX)}{2}, \qquad  \tau(X) = \frac{df(X)- J'df(JX)}{2}
\end{equation*}
for any $X\in TM$. Then $\sigma$ and $\tau$ are $J$-invariant. Furthermore, $\sigma$ and $\tau$  satisfy
the conservation laws if $f$ is pluriharmonic map (cf. \cite{Do}).

\textbf{Example 3.3}.  Let $\varphi: M \rightarrow N$ be a complex submanifold of a K\"ahler manifold.
It is known that the second fundamental form $B$ of $M$ satisfies
\begin{equation}
B(JX, Y) = JB(X, Y).
\end{equation}
If we regard $B \in A^1(T^*M \otimes \varphi^{-1}N)$, then (3.19) implies that $B$ is a $J$-invariant
$1$-form with value in $T^*M \otimes \varphi^{-1}N$. When $N$ is a complex space form, $B$ satisfies
the following Codazzi equation
\begin{equation}
(\nabla_XB)(Y,Z) = (\nabla_Y B)(X,Z).
\end{equation}
Then (3.20) is equivalent to $d^\nabla B = 0$. Since a complex submanifold is automatically
minimal, (3.20) yields $\delta^\nabla B = 0$ too. It follows from (2.3) that $B$ satisfies the
conservation law.

\textbf{Example 3.4}.  The Ricci form $\Omega_{ric}$ of a K\"ahler manifold $(M^m, g, J)$ is defined by $\Omega_{ric}(X, Y ) =Ric(JX, Y)$.
 A basic property of $\Omega_{ric}$ is that $\Omega_{ric}(JX,JY) =\Omega_{ric}(X, Y)$. So $\Omega_{ric}$
is a $J$-invariant $2$-form in the sense of Definition 3.1. On the other hand, we
know that $\Omega_{ric}$ is closed and
\begin{equation}
\delta \Omega_{ric}=-\frac{1}{2}JdS,
\end{equation}
where $S$ denotes the scalar curvature of $M$ (cf. \cite{Mor}). By (3.21) and (2.3), we
see that if $M^m$ has constant scalar curvature, then its Ricci form $\Omega_{ric}$ satisfies the
conservation law.

In \cite{Do}, the author established some monotonicity formulae for the energy or
partial energies of harmonic maps, pluriconformal harmonic maps and pluriharmonic
maps by considering $df$, $\sigma$ and $\tau$. Consequently some Liouville results and
holomorphicity results were obtained.

In $\S$5, we shall consider submanifolds in $R^N$ with parallel mean curvature,
including minimal submanifolds and minimal real K\"ahler submanifolds. The so-called
minimal real K\"ahlerr submanifolds are generalizations of K\"ahler submanifolds.

From Theorem 3.4 and Example 3.4, we immediately get
\begin{theorem}
Let $(M^m, g, J)$ be a complex $m$-dimensional K\"ahler manifold
which posses a complex $2$-exhaustion function $\Phi$. If $M$ has constant scalar curvature,
then
\begin{equation*}
\frac{1}{\rho_1^{\overline{\lambda}}}\int_{B_{\Phi}(\rho_1)}|\Omega_{ric}|^2 dv \leq \frac{1}{\rho_2^{\overline{\lambda}}}\int_{B_{\Phi}(\rho_2)}|\Omega_{ric}|^2 dv
\end{equation*}
for any $0 < \rho_1 \leq \rho_2$, where $\overline{\lambda}=\overline{k}_1/ \overline{k}_2$.
\end{theorem}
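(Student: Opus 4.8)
The plan is to recognize that this final theorem is a direct corollary of two results already established in the paper, namely Theorem 3.4 (the monotonicity formula for $J$-invariant $p$-forms satisfying the conservation law on K\"ahler manifolds with a complex $p$-exhaustion function) and Example 3.4 (which shows that the Ricci form $\Omega_{ric}$ of a K\"ahler manifold is a $J$-invariant $2$-form, and that it satisfies the conservation law whenever the scalar curvature is constant). First I would verify that the hypotheses of Theorem 3.4 are met with the choice $\omega = \Omega_{ric}$ and $p = 2$. The ambient manifold $M^m$ is assumed to carry a complex $2$-exhaustion function $\Phi$, which is exactly the exhaustion-function hypothesis required by Theorem 3.4 in the case $p = 2$.

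Next I would assemble the two properties of $\Omega_{ric}$ needed to invoke Theorem 3.4. From Example 3.4, the identity $\Omega_{ric}(JX, JY) = \Omega_{ric}(X, Y)$ shows that $\Omega_{ric}$ is $J$-invariant in the sense of Definition 3.1; one should note here that Definition 3.1 is phrased in terms of $(\omega \odot \omega)(JX, JY) = (\omega \odot \omega)(X, Y)$, so strictly one checks that the $J$-invariance of the form $\Omega_{ric}$ as a tensor propagates to the symmetric square $\Omega_{ric} \odot \Omega_{ric}$, which it does since $J$ acts as an isometry and interior multiplication interacts compatibly with the almost complex structure. The second property is the conservation law $\mathrm{div}\,S_{\Omega_{ric}} = 0$, which by formula (2.3) reduces to the vanishing of both $\delta^\nabla \Omega_{ric}$ and the $d^\nabla$-term; since $\Omega_{ric}$ is closed and $\delta \Omega_{ric} = -\tfrac{1}{2} J\,dS$ by (3.21), the constant scalar curvature assumption $dS = 0$ forces both terms to vanish.

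Having confirmed that $\Omega_{ric}$ is a $J$-invariant $2$-form satisfying the conservation law, I would simply apply Theorem 3.4 with $p = 2$ and $\omega = \Omega_{ric}$. The conclusion of Theorem 3.4 reads
\begin{equation*}
\frac{1}{\rho_1^{\overline{\lambda}}}\int_{B_\Phi(\rho_1)}|\omega|^2\,dv \leq \frac{1}{\rho_2^{\overline{\lambda}}}\int_{B_\Phi(\rho_2)}|\omega|^2\,dv
\end{equation*}
for all $0 < \rho_1 \leq \rho_2$, with $\overline{\lambda} = \overline{k}_1/\overline{k}_2$; substituting $\omega = \Omega_{ric}$ yields precisely the desired inequality. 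This theorem therefore requires no new analysis of its own.

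Since the proof is a direct specialization, there is no genuine analytic obstacle; the only point demanding care is the bookkeeping that the two structural hypotheses of Theorem 3.4 are actually satisfied. In particular, the subtle step is the passage from the tensorial $J$-invariance of $\Omega_{ric}$ to the $\odot$-invariance demanded by Definition 3.1, and the verification via (2.3) and (3.21) that constant scalar curvature is exactly the condition making $\mathrm{div}\,S_{\Omega_{ric}}$ vanish. Once these are in place, the statement follows immediately, and I would write the proof in a single line: \emph{This follows immediately from Theorem 3.4 and Example 3.4 by taking $\omega = \Omega_{ric}$ and $p = 2$.}
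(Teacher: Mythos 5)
Your proposal is correct and follows exactly the paper's own route: the paper derives this theorem immediately from Theorem 3.4 (with $p=2$, $\omega=\Omega_{ric}$) together with Example 3.4, which supplies both the $J$-invariance and the conservation law under constant scalar curvature. Your extra care in checking that the tensorial identity $\Omega_{ric}(JX,JY)=\Omega_{ric}(X,Y)$ propagates to $\Omega_{ric}\odot\Omega_{ric}$, and that (2.3) and (3.21) combine to give $\mathrm{div}\,S_{\Omega_{ric}}=0$, is a welcome elaboration of details the paper leaves implicit, but it is the same argument.
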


By Example 2.4 and Theorem 3.8, we have
\begin{corollary}
Let $M^m$ $(m > 2)$ be a complex $m$-dimensional Stein manifold with metric and
$\Phi$  as in Example 2.4. If $M$ has constant scalar curvature, then
\begin{equation*}
\frac{1}{\rho_1^{2(m-2)}}\int_{B_{\Phi}(\rho_1)}|\Omega_{ric}|^2 dv \leq \frac{1}{\rho_2^{2(m-2)}}\int_{B_{\Phi}(\rho_2)}|\Omega_{ric}|^2 dv
\end{equation*}
for any $0 < \rho_1 \leq \rho_2$.
\end{corollary}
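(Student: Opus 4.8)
The plan is to read the statement as the special case $p=2$, $\omega=\Omega_{ric}$ of Theorem 3.8, so the work splits into three routine checks: that the Ricci form satisfies the hypotheses of Theorem 3.8, that the Stein exhaustion function of Example 2.4 furnishes the required complex $2$-exhaustion function, and that the resulting growth order can be lowered to the exponent $2(m-2)$ stated in the corollary. First I would recall from Example 3.4 that $\Omega_{ric}$ is a $J$-invariant $2$-form. Since $M$ has constant scalar curvature $S$, formula (3.21) gives $\delta\Omega_{ric}=-\tfrac12 J\,dS=0$, and $\Omega_{ric}$ is closed, so that $d^\nabla\Omega_{ric}=0$; then (2.3) yields $(div\,S_{\Omega_{ric}})(X)=\langle\delta^\nabla\Omega_{ric},i_X\Omega_{ric}\rangle+\langle i_Xd^\nabla\Omega_{ric},\Omega_{ric}\rangle=0$, i.e. $\Omega_{ric}$ satisfies the conservation law. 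Thus $\omega=\Omega_{ric}$ with $p=2$ meets every hypothesis of Theorem 3.8, once a complex $2$-exhaustion function is in hand.

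Next I would invoke Example 2.4 with $\Psi=\Phi^2=\psi^*(\|z\|^2)$ for the proper holomorphic embedding $\psi:M^m\to C^N$. There the eigenvalues of the complex Hessian $H(\Psi)$ are all equal to $2$, so for $p=2$ one computes $\overline{k}_1=\inf_x\{\sum_{i=1}^m\epsilon_i-2\epsilon_m\}=2(m-2)$, which is positive precisely because $m>2$, while $\overline{k}_2=\sup_x|\nabla\Phi|^2\leq 1$ (the induced gradient cannot exceed the ambient one). Hence $\Phi$ is a complex $2$-exhaustion function, and Theorem 3.8 applies, giving the monotonicity of $\rho\mapsto\rho^{-\overline{\lambda}}\int_{B_\Phi(\rho)}|\Omega_{ric}|^2\,dv$ with growth order $\overline{\lambda}=\overline{k}_1/\overline{k}_2\geq 2(m-2)$.

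Finally I would pass from the exponent $\overline{\lambda}$ to the smaller exponent $2(m-2)$ claimed in the statement. Writing
$$\rho^{-2(m-2)}\int_{B_\Phi(\rho)}|\Omega_{ric}|^2\,dv=\rho^{\,\overline{\lambda}-2(m-2)}\cdot\Big(\rho^{-\overline{\lambda}}\int_{B_\Phi(\rho)}|\Omega_{ric}|^2\,dv\Big),$$
I express the desired quantity as a product of two nonnegative nondecreasing functions of $\rho$: the first factor is nondecreasing because $\overline{\lambda}-2(m-2)\geq 0$, and the second is nondecreasing by Theorem 3.8. A product of nonnegative nondecreasing functions is nondecreasing, so evaluating at $\rho_1\leq\rho_2$ gives exactly the asserted inequality. (Equivalently, one could return to the differential inequality (3.16) underlying Theorem 3.8, note $\overline{\lambda}/t\geq 2(m-2)/t$, and integrate over $[\rho_1,\rho_2]$.) There is no serious obstacle in this corollary; the only point requiring a moment's care is this exponent comparison, which is legitimate solely because $\overline{k}_2\leq 1$ forces $\overline{\lambda}\geq 2(m-2)$.
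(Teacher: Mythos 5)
Your proposal is correct and follows exactly the paper's route: the paper obtains this corollary by combining Example 2.4 (which makes $\Phi$ a complex $2$-exhaustion function with $\overline{\lambda}=\overline{k}_1/\overline{k}_2\geq 2(m-2)$, positive since $m>2$) with Theorem 3.8, the $J$-invariance and conservation law of $\Omega_{ric}$ for constant scalar curvature being supplied by Example 3.4. Your explicit justification for lowering the exponent from $\overline{\lambda}$ to $2(m-2)$ --- via the product of nondecreasing functions, or equivalently by weakening the differential inequality (3.16) --- is a step the paper leaves implicit, and you handle it correctly.
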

Taking $p = 2$ in Lemma 2.4, and using Theorem 3.8, we yield
\begin{corollary}
Let $M^m$ $(dim_C M =m)$ be a complete K\"ahler manifold with a pole. Assume
that the radial curvature of $M$ satisfies one of the following three conditions:

(i) $-\alpha^2\leq K_r \leq -\beta^2$ with  $\alpha> 0$, $\beta> 0$ and $m\beta-2\alpha \geq 0$;

(ii) $-\frac{A}{(1+r^2)^{1+\epsilon}} \leq K_r \leq \frac{B}{(1+r^2)^{1+\epsilon}}$ with $\epsilon>0$, $A \geq 0$, $0 \leq B < 2\epsilon$
and $1+ (2m-1)[1-\frac{B}{2\epsilon}]-4e^{A/2\epsilon} > 0$;

(iii) $-\frac{a^2}{1+r^2} \leq K_r \leq \frac{b^2}{1+r^2}$ with $a\geq 0$, $b^2 \in[0,1/4]$ and
$2+ (2m-1)[1+\sqrt{1-4b^2}]-4[1+\sqrt{1+4a^2}] > 0$.

If $M$ has constant scalar curvature, then
\begin{equation*}
\frac{1}{\rho_1^{\overline{\lambda}}}\int_{B_{\Phi}(\rho_1)}|\Omega_{ric}|^2 dv \leq \frac{1}{\rho_2^{\overline{\lambda}}}\int_{B_{\Phi}(\rho_2)}|\Omega_{ric}|^2 dv
\end{equation*}
for any $0 < \rho_1 \leq \rho_2$, where
\begin{equation}
\overline{\lambda}= \begin{cases}
2[m-2\frac{\alpha}{\beta}] & \text{if $K_r$ satisfies (i),} \\
1 + (2m-1)(1-\frac{B}{2\epsilon})-4e^{A/2\epsilon}] & \text{if $K_r$ satisfies (ii),} \\
1 + \frac{(2m-1)[1+\sqrt{1-4b^2}]}{2}-2[1+\sqrt{1+4a^2}] & \text{if $K_r$ satisfies (iii).}
\end{cases}
\end{equation}
\end{corollary}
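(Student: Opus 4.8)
The plan is to read this corollary as the $p=2$ specialization of the apparatus built in Lemma 2.4 and Theorem 3.8, applied to $\omega=\Omega_{ric}$. Once Theorem 3.8 is granted, the only thing that must be supplied is a \emph{concrete} complex $2$-exhaustion function on $M$; the radial curvature hypotheses are there precisely to guarantee that the distance function $r$ from the pole $x_0$ serves this role. So I would organize the argument in two steps: first manufacture the exhaustion function from the curvature pinching, then feed it into Theorem 3.8.

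First I would set $\Phi=r$ and $\Psi=r^2$, and apply Lemma 2.4 with $p=2$. Because $M$ has a pole, $r^2$ is smooth with a single (hence discrete) critical point at $x_0$, so conditions (2.7) and (2.8) hold automatically. The content of Lemma 2.4 is that under each of (i), (ii), (iii) the complex Hessian eigenvalues $\epsilon_1\le\cdots\le\epsilon_m$ of $H(r^2)$ satisfy $\overline{k}_1=\inf_M\{\sum_{i=1}^m\epsilon_i-2\epsilon_m\}>0$, with the resulting growth order $\overline{\lambda}=\overline{k}_1/\overline{k}_2$ equal to the three expressions in (3.23). These are exactly the $p=2$ instances of (2.18): substituting $p=2$ into the threshold inequalities and into the growth orders of Lemma 2.4 reproduces the hypotheses and the values of $\overline{\lambda}$ displayed here, so that $r$ is a genuine complex $2$-exhaustion function in the sense of (2.10). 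In case (i) the stated inequality $m\beta-2\alpha\ge 0$ is just the requirement that $\overline{\lambda}=2[m-2\tfrac{\alpha}{\beta}]$ be nonnegative, and one should check that the pinching indeed delivers $\overline{k}_1>0$ before invoking the monotonicity.

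Second, since $M$ has constant scalar curvature, Example 3.4 tells us that the Ricci form $\Omega_{ric}$ is a $J$-invariant $2$-form and, via $\delta\Omega_{ric}=-\tfrac12 J\,dS$, the closedness of $\Omega_{ric}$, and (2.3), satisfies the conservation law $\mathrm{div}\,S_{\Omega_{ric}}=0$; this is precisely the hypothesis packaged into Theorem 3.8. Applying Theorem 3.8 with the complex $2$-exhaustion function $r$ produced in the first step then yields the monotonicity of $\rho^{-\overline{\lambda}}\int_{B_\rho(x_0)}|\Omega_{ric}|^2\,dv$ with $\overline{\lambda}$ as in (3.23). I do not expect a real obstacle here: the analytic heart already resides in the Hessian comparison of Lemma 2.2 and Lemma 2.4 and in the stress-energy integral formula (2.6) underlying Theorem 3.4 and Theorem 3.8. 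The present task is essentially bookkeeping---setting $p=2$ and confirming that the three curvature thresholds and the three values of $\overline{\lambda}$ transcribe correctly---so the main point demanding care is the clean matching of constants, especially the threshold in case (i).
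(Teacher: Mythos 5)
Your proposal is correct and follows exactly the paper's own route: the paper's entire proof is the single sentence ``Taking $p=2$ in Lemma 2.4, and using Theorem 3.8,'' where Theorem 3.8 is itself Theorem 3.4 combined with Example 3.4 (the Ricci form of a constant-scalar-curvature K\"ahler manifold is a $J$-invariant $2$-form satisfying the conservation law). Your extra caution about case (i) is well placed---substituting $p=2$ into Lemma 2.4 actually demands $(2m-1)\beta-4\alpha\geq 0$, which is strictly stronger than the stated $m\beta-2\alpha\geq 0$ (the latter only guarantees $\overline{\lambda}\geq 0$)---but this discrepancy lies in the paper's statement itself, not in your argument, which otherwise matches the paper's proof step for step.
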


Now we deduce some vanishing theorems from the above monotonicity formulae. First, by Theorem 3.1 and Theorem 3.4, we have:
\begin{corollary}
Let $M^m$, $\xi:E \rightarrow M$, $\Phi$  and $\omega$ be either as in Theorem 3.1 or as in Theorem 3.4.  If $\omega$ either satisfies
\begin{equation*}
\int_{ B_\Phi(R)} |\omega|^2dv = o(R^{\lambda}) \qquad as \  R \rightarrow  \infty,
\end{equation*}
or
\begin{equation*}
\int_{ B_\Phi(R)} |\omega|^2dv = o(R^{\overline{\lambda}}) \qquad as \  R \rightarrow  \infty,
\end{equation*}
where $\lambda$ and $\overline{\lambda}$ are as in Theorem 3.1 and Theorem 3.4 respectively,
then $\omega=0$.  In
particular, if $\omega $ has finite energy, then $\omega=0$.
\end{corollary}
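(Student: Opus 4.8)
The plan is to read each monotonicity formula as the statement that a single normalized energy quantity is monotone, and then to squeeze it to zero using the growth hypothesis. Concretely, in the setting of Theorem 3.1 define
\begin{equation*}
g(\rho)=\frac{1}{\rho^{\lambda}}\int_{B_{\Phi}(\rho)}|\omega|^2\,dv,\qquad \rho>0,
\end{equation*}
and observe that the conclusion of Theorem 3.1 is exactly the assertion that $g$ is non-decreasing, i.e.\ $g(\rho_1)\leq g(\rho_2)$ whenever $0<\rho_1\leq\rho_2$. In the setting of Theorem 3.4 I would use the identical device with $\lambda$ replaced by $\overline{\lambda}$, since that theorem gives the same monotone structure for $J$-invariant $\omega$ on the K\"ahler manifold. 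This reduces the whole corollary to an elementary fact about monotone non-negative functions.

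Next I would exploit the growth condition. The hypothesis $\int_{B_{\Phi}(R)}|\omega|^2\,dv=o(R^{\lambda})$ says precisely that $g(R)\to 0$ as $R\to\infty$ (and analogously $g(R)\to 0$ under the $o(R^{\overline{\lambda}})$ hypothesis in the Theorem 3.4 case). Fixing an arbitrary $\rho_1>0$ and letting $\rho_2\to\infty$ in the monotonicity inequality, I obtain
\begin{equation*}
0\leq g(\rho_1)\leq \lim_{\rho_2\to\infty}g(\rho_2)=0,
\end{equation*}
so $g(\rho_1)=0$ for every $\rho_1>0$. Hence $\int_{B_{\Phi}(\rho_1)}|\omega|^2\,dv=0$ for all $\rho_1>0$. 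Because $\Phi$ is an exhaustion function, the sublevel sets $B_{\Phi}(\rho_1)$ increase to all of $M$ as $\rho_1\to\infty$, and monotone convergence gives $\int_M|\omega|^2\,dv=0$, whence $\omega\equiv 0$.

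For the final ``in particular'' clause I would note that the exponents are strictly positive: in Theorem 3.1 one has $\lambda=k_1/2k_2$ with $k_1>0$ and $k_2$ finite and positive, so $\lambda>0$, and likewise $\overline{\lambda}=\overline{k}_1/\overline{k}_2>0$ in Theorem 3.4. Therefore, if $\omega$ has finite energy, $\int_{B_{\Phi}(R)}|\omega|^2\,dv\leq\int_M|\omega|^2\,dv<\infty$ is bounded, hence $O(1)=o(R^{\lambda})$ (resp.\ $o(R^{\overline{\lambda}})$), so the growth hypothesis is automatically met and $\omega\equiv 0$ follows. I do not anticipate a serious obstacle here: the argument is a routine monotone-limit squeeze, and the only points requiring care are confirming strict positivity of $\lambda$ and $\overline{\lambda}$ (needed for the finite-energy reduction) and invoking the exhaustion property to pass from vanishing on every $B_{\Phi}(\rho)$ to vanishing on $M$.
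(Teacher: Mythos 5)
Your proposal is correct and follows exactly the argument the paper intends: the corollary is stated as an immediate consequence of the monotonicity formulae, and your squeeze (fix $\rho_1$, let $\rho_2\to\infty$ so the normalized energy tends to zero, then use the exhaustion property to conclude $\omega\equiv 0$ on all of $M$) is precisely that implicit argument. Your additional check that $\lambda>0$ and $\overline{\lambda}>0$ (guaranteed by conditions (2.9) and (2.10)) correctly justifies the finite-energy case as well.
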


By Theorem 3.2 and Theorem 3.5, we derive the following two corollaries:
\begin{corollary}
Let $(M, g)$, $\xi:E \rightarrow M$, $\Phi$, $B_\Phi(R_0)$ and $\omega\in A^p(\xi)$ be as in Theorem 3.2. Set $v=\frac{\nabla\Phi}{|\nabla\Phi|}$.
 If $\omega$ satisfies \\
 $$\frac{|\omega|^2}{2}\geq |i_{v}\omega|^2$$
 on $\partial B_\Phi(R_0)$ and
$$\int_{ B_\Phi(R)\setminus B_\Phi(R_0)} |\omega|^2dv = o(R^{\lambda(R_0)}) \qquad as \qquad R \rightarrow  \infty, $$
then  $\omega\equiv0$ on $M\setminus B_\Phi(R_0)$. In
particular, if $\omega $ has finite energy, then $\omega\equiv0$ on $M\backslash B_\Phi(R_0)$.
\end{corollary}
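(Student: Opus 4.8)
The plan is to derive the conclusion directly from the monotonicity formula of Theorem 3.2 by a standard contradiction argument. The first thing to observe is that the corollary's hypotheses are precisely those of Theorem 3.2: on $\partial B_\Phi(R_0)$ the unit outward normal $\nu$ coincides with $v=\nabla\Phi/|\nabla\Phi|$, since $\nabla\Phi$ points in the direction of increasing $\Phi$ and is therefore outward along the sublevel boundary. Hence the pointwise condition $|\omega|^2/2 \geq |i_v\omega|^2$ is exactly the hypothesis (3.4), and all the remaining assumptions (completeness, the conservation law $divS_\omega=0$, and $\Phi$ being a real $p$-exhaustion function on $M\setminus B_\Phi(R_0)$) are shared. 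Consequently the monotonicity formula (3.5) is available with the same exponent $\lambda(R_0)=k_1(R_0)/2k_2(R_0)$.

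Next I would argue by contradiction. Suppose $\omega\not\equiv 0$ on $M\setminus B_\Phi(R_0)$. Then for some fixed $\rho_1>R_0$ the quantity $c:=\int_{B_\Phi(\rho_1)\setminus B_\Phi(R_0)}|\omega|^2\,dv$ is strictly positive. Applying (3.5) with this $\rho_1$ and an arbitrary $\rho_2\geq\rho_1$ yields $\rho_2^{-\lambda(R_0)}\int_{B_\Phi(\rho_2)\setminus B_\Phi(R_0)}|\omega|^2\,dv \geq c\,\rho_1^{-\lambda(R_0)}>0$, so the normalized energy stays bounded below by a fixed positive constant as $\rho_2\to\infty$.

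This directly contradicts the growth hypothesis $\int_{B_\Phi(R)\setminus B_\Phi(R_0)}|\omega|^2\,dv = o(R^{\lambda(R_0)})$, which forces exactly this normalized energy to tend to zero as $R\to\infty$. Hence $\omega\equiv 0$ on $M\setminus B_\Phi(R_0)$, proving the main assertion. For the final statement I would note that $\lambda(R_0)=k_1(R_0)/2k_2(R_0)>0$, because $\Phi$ is a real $p$-exhaustion function on $M\setminus B_\Phi(R_0)$ (so $k_1(R_0)>0$ and $k_2(R_0)<\infty$); then finite energy gives $\int_{B_\Phi(R)\setminus B_\Phi(R_0)}|\omega|^2\,dv \leq \int_M|\omega|^2\,dv<\infty$, so that $\int_{B_\Phi(R)\setminus B_\Phi(R_0)}|\omega|^2\,dv = o(R^{\lambda(R_0)})$ holds automatically, and the conclusion follows from the first part.

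I do not expect a genuine obstacle here, since the analytic content is entirely contained in Theorem 3.2. The only points requiring mild care are the identification $v=\nu$ on $\partial B_\Phi(R_0)$, which licenses the use of (3.4), and the elementary but essential observation that $\lambda(R_0)>0$, so that the $o(R^{\lambda(R_0)})$ growth condition is nonvacuous and is implied by finiteness of the energy.
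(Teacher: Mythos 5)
Your proposal is correct and follows essentially the same route as the paper, which derives this corollary directly from the monotonicity formula (3.5) of Theorem 3.2: the growth hypothesis forces the right-hand side of (3.5) to vanish as $\rho_2\to\infty$, hence the weighted energy over every annulus $B_\Phi(\rho_1)\setminus B_\Phi(R_0)$ is zero. Your contradiction framing, the identification of $v$ with the outward normal $\nu$ on $\partial B_\Phi(R_0)$, and the remark that $\lambda(R_0)>0$ (so finite energy implies the growth condition) are exactly the intended, if unstated, details.
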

\begin{corollary}
   Let $(M, g)$, $\xi:E \rightarrow M$, $\Phi$, $B_\Phi(R_0)$ and $\omega\in A^1(\xi)$ be as in Theorem 3.5.
   If
   \begin{equation*}
   \int_{ B_\Phi(R)\setminus B_\Phi(R_0)}|\omega|^2dv = o(R^{\overline{\lambda}(R_0)})  \qquad as \ R \rightarrow  \infty,
   \end{equation*}
   then  $\omega\equiv0$ on $M\setminus B_\Phi(R_0)$. In
particular, if $\omega $ has finite energy, then $\omega\equiv0$ on $M\backslash B_\Phi(R_0)$.
\end{corollary}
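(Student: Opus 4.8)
The plan is to feed the growth hypothesis directly into the monotonicity formula established in Theorem 3.5. Since $\omega \in A^1(\xi)$ is $J$-invariant, satisfies the conservation law, and $\Phi$ is a complex $1$-exhaustion function on $M \setminus B_\Phi(R_0)$, all the hypotheses of Theorem 3.5 are in force, so for every pair $R_0 < \rho_1 \leq \rho_2$ we have
$$\frac{1}{\rho_1^{\overline{\lambda}(R_0)}}\int_{B_\Phi(\rho_1)\setminus B_\Phi(R_0)}|\omega|^2\,dv \leq \frac{1}{\rho_2^{\overline{\lambda}(R_0)}}\int_{B_\Phi(\rho_2)\setminus B_\Phi(R_0)}|\omega|^2\,dv.$$
First I would fix $\rho_1 > R_0$ and let $\rho_2 = R \to \infty$. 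The growth condition asserts precisely that the right-hand side tends to $0$, so the left-hand side, being independent of $\rho_2$, must be $\leq 0$. As $|\omega|^2 \geq 0$, this forces $\int_{B_\Phi(\rho_1)\setminus B_\Phi(R_0)}|\omega|^2\,dv = 0$, that is, $\omega \equiv 0$ on $B_\Phi(\rho_1)\setminus B_\Phi(R_0)$.

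Next, since $\rho_1 > R_0$ was arbitrary and $\Phi$ is an exhaustion function, the sublevel sets $B_\Phi(\rho_1)$ increase to all of $M$; letting $\rho_1 \to \infty$ then yields $\omega \equiv 0$ on $M \setminus B_\Phi(R_0)$, the desired conclusion. For the final ``in particular'' assertion, I would observe that when $\omega$ has finite energy the quantity $\int_{B_\Phi(R)\setminus B_\Phi(R_0)}|\omega|^2\,dv$ is bounded by the total energy $\int_M |\omega|^2\,dv < \infty$, and hence is $O(1)$ as $R \to \infty$.

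The one point that deserves care is the positivity of the exponent $\overline{\lambda}(R_0)$. Because $\Phi$ is a complex $1$-exhaustion function on $M \setminus B_\Phi(R_0)$, the definition guarantees $\overline{k}_1(R_0) > 0$, whence $\overline{\lambda}(R_0) = \overline{k}_1(R_0)/\overline{k}_2(R_0) > 0$. This positivity is exactly what makes a bounded energy automatically $o(R^{\overline{\lambda}(R_0)})$, so the finite-energy case reduces to the growth hypothesis already treated. Beyond this observation there is no analytic obstacle: the corollary is a direct limiting consequence of Theorem 3.5.
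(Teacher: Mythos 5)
Your proof is correct and follows exactly the route the paper intends: Corollary 3.5 is stated as a direct consequence of the monotonicity formula of Theorem 3.5, and your argument (fix $\rho_1$, send $\rho_2 = R \to \infty$ under the growth hypothesis, then exhaust $M \setminus B_\Phi(R_0)$ by the sets $B_\Phi(\rho_1)\setminus B_\Phi(R_0)$) is precisely that derivation. Your added remark that $\overline{\lambda}(R_0) = \overline{k}_1(R_0)/\overline{k}_2(R_0) > 0$, which is what makes finite energy imply the $o(R^{\overline{\lambda}(R_0)})$ condition, is a correct and worthwhile clarification of a point the paper leaves implicit.
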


\begin{remark}
If $\omega$ in Corollary 3.4 or Corollary 3.5 posses the unique continuation property, then $\omega=0$ on the whole $M$.
\end{remark}
It follows from Theorem 3.6 that
\begin{corollary}
Let $M^m$ be a complex $m$-dimensional Stein manifold with metric and
$\Phi$ as in Theorem 3.6. Suppose $\xi: E \rightarrow M$ is a Riemannian vector bundle and
$\omega \in A^p(\xi)$ with $p < m$ is $J$-invariant. If $\omega$ satisfies the conservation law and
\begin{equation*}
\int_{B_{\Phi}(\rho)}|\omega|^2dv = o(\rho^{2(m-p)}) \ as \ \rho \rightarrow \infty,
\end{equation*}
then $\omega=0$.
 \end{corollary}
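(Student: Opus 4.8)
The plan is to deduce the vanishing directly from the monotonicity formula supplied by Theorem 3.6, so that essentially all of the analytic work is already in place and only a limit argument remains. Since $M^m$ is a Stein manifold equipped with the metric and exhaustion function $\Phi$ of Example 2.4, and $\omega \in A^p(\xi)$ with $p<m$ is $J$-invariant and satisfies the conservation law, the hypotheses of Theorem 3.6 hold verbatim, and that theorem yields
$$\frac{1}{\rho_1^{2(m-p)}}\int_{B_{\Phi}(\rho_1)}|\omega|^2\, dv \leq \frac{1}{\rho_2^{2(m-p)}}\int_{B_{\Phi}(\rho_2)}|\omega|^2\, dv$$
for all $0<\rho_1 \leq \rho_2$.

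First I would fix an arbitrary $\rho_1>0$ and regard the left-hand side as a constant. Letting $\rho_2 \to \infty$ on the right-hand side, the growth hypothesis $\int_{B_{\Phi}(\rho_2)}|\omega|^2\, dv = o(\rho_2^{2(m-p)})$ forces the quotient $\rho_2^{-2(m-p)}\int_{B_{\Phi}(\rho_2)}|\omega|^2\, dv$ to tend to $0$. Combined with the displayed inequality, this gives
$$\frac{1}{\rho_1^{2(m-p)}}\int_{B_{\Phi}(\rho_1)}|\omega|^2\, dv \leq 0,$$
and since the integrand is nonnegative I conclude $\int_{B_{\Phi}(\rho_1)}|\omega|^2\, dv = 0$ for every $\rho_1>0$.

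Finally, because $\Phi$ is an exhaustion function, the relatively compact sublevel sets $B_{\Phi}(\rho_1)$ increase to all of $M$ as $\rho_1 \to \infty$ by condition (2.7), so monotone convergence yields $\int_M |\omega|^2\, dv = 0$, whence $\omega \equiv 0$ on $M$. I do not anticipate any genuine obstacle here: the substantive content---controlling $\langle S_\omega,\nabla\theta_X\rangle$ through the complex Hessian eigenvalues $\epsilon_i=2$ and the $J$-invariance of $\omega$, which forces the sharp growth exponent $2(m-p)$---has already been carried out in Theorem 3.4 and specialized via Example 2.4. Thus the corollary is a purely formal consequence of the monotonicity formula together with the prescribed energy growth, exactly paralleling the deduction of Corollary 3.3 from Theorem 3.1 and Theorem 3.4.
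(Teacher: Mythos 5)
Your proposal is correct and follows exactly the route the paper intends: the corollary is stated as an immediate consequence of the monotonicity formula of Theorem 3.6, and your argument---fixing $\rho_1$, letting $\rho_2 \to \infty$ so the growth hypothesis kills the right-hand side, and then exhausting $M$ by the sublevel sets $B_{\Phi}(\rho_1)$---is precisely that deduction, mirroring how Corollary 3.3 follows from Theorems 3.1 and 3.4. No gaps.
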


From Theorem 3.3 and Theorem 3.7, we get
\begin{corollary}
 Let $M^m$, $\xi$, $K_r$ and $\omega$ be as in Theorem 3.3 (resp. Theorem 3.7). Then $\omega=0$ if $\omega$ satisfies
\begin{equation*}
\int_{B_{\rho}(x_0)}|\omega|^2dv = o(\rho^{\lambda}) \qquad (resp. \ o(\rho^{\overline{\lambda}})) \ as \ \rho \rightarrow \infty,
\end{equation*}
where $\lambda$ (resp. $\overline{\lambda}$) is given by (3.9) (resp. (3.18)).
\end{corollary}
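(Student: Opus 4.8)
The plan is to read the vanishing off directly from the two monotonicity formulae already established in Theorem 3.3 and Theorem 3.7, since the little-$o$ growth hypothesis is precisely what is needed to force the normalized energy to vanish in the limit. I treat the two cases in parallel: I carry out the argument for the situation of Theorem 3.3, where $\lambda$ is given by (3.9), and the case of Theorem 3.7, with $\overline{\lambda}$ given by (3.18), follows verbatim after replacing $\lambda$ by $\overline{\lambda}$ throughout.

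First I fix an arbitrary radius $\rho_1 > 0$. By the monotonicity formula of Theorem 3.3, for every $\rho_2 \geq \rho_1$ one has
$$\frac{1}{\rho_1^{\lambda}} \int_{B_{\rho_1}(x_0)} |\omega|^2\, dv \leq \frac{1}{\rho_2^{\lambda}} \int_{B_{\rho_2}(x_0)} |\omega|^2\, dv.$$
The left-hand side does not depend on $\rho_2$, so I am free to send $\rho_2 \to \infty$ on the right. Next I invoke the growth hypothesis $\int_{B_{\rho}(x_0)} |\omega|^2\, dv = o(\rho^{\lambda})$ as $\rho \to \infty$, which by definition means $\rho^{-\lambda} \int_{B_{\rho}(x_0)} |\omega|^2\, dv \to 0$. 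Passing to the limit $\rho_2 \to \infty$ in the displayed inequality therefore yields
$$\frac{1}{\rho_1^{\lambda}} \int_{B_{\rho_1}(x_0)} |\omega|^2\, dv \leq 0.$$

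Since the integrand $|\omega|^2$ is nonnegative and $\rho_1^{\lambda} > 0$, this forces $\int_{B_{\rho_1}(x_0)} |\omega|^2\, dv = 0$, and hence $\omega \equiv 0$ on $B_{\rho_1}(x_0)$. Finally, because $M$ has a pole $x_0$, the geodesic balls $B_{\rho_1}(x_0)$ exhaust $M$ as $\rho_1 \to \infty$; as $\rho_1 > 0$ was arbitrary, I conclude that $\omega = 0$ on all of $M$. The case governed by Theorem 3.7 is identical with $\overline{\lambda}$ in place of $\lambda$.

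I do not expect any serious obstacle here, as essentially the entire content is packaged inside the two monotonicity formulae. The only point that genuinely uses a hypothesis is the passage to the limit: it is exactly the little-$o$ decay (rather than a mere big-$O$ bound) that annihilates the right-hand side, since a big-$O$ bound would only guarantee boundedness of the normalized energy and not its vanishing. It is therefore worth noting explicitly that the sharpness of the growth exponent $\lambda$ (resp. $\overline{\lambda}$) is inherited directly from the exhaustion-function data and the curvature conditions of Lemma 2.3 (resp. Lemma 2.4), so no further estimate is required.
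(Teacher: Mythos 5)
Your proof is correct and follows exactly the route the paper intends: Corollary 3.7 is stated as an immediate consequence of the monotonicity formulae of Theorem 3.3 and Theorem 3.7, and your argument (fix $\rho_1$, let $\rho_2\to\infty$, use the little-$o$ hypothesis to kill the right-hand side, conclude the energy on each ball vanishes) is precisely that deduction, with the routine details written out. No gaps; the only cosmetic remark is that the exhaustion of $M$ by geodesic balls needs only completeness, not the pole.
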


By  Corollary 3.1, we have
\begin{corollary}
Let $M^m$ $(m > 2)$ be a complex $m$-dimensional Stein manifold with metric and $\Phi$ as in Example 2.4.
 If $M$ has constant scalar curvature and
 $$\int_{B_{\Phi}(\rho)}|\Omega_{ric}|^2 dv= o(\rho^{2(m-2)}),$$  then
$M$ is holomorphically isometric to $C^m$.
\end{corollary}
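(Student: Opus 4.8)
The plan is to reduce the statement to the Ricci-flatness of $M$ via the monotonicity formula of Corollary 3.1, and then to exploit the fact that, by Example 2.4, $M$ is realized as a complete complex submanifold of $C^N$. Since $M$ has constant scalar curvature, Example 3.4 guarantees that the Ricci form $\Omega_{ric}$ is a $J$-invariant $2$-form satisfying the conservation law, so Corollary 3.1 applies with $p=2$ (here $m>2$ ensures the exponent $2(m-2)$ is positive) and tells us that
$$ g(\rho):=\rho^{-2(m-2)}\int_{B_{\Phi}(\rho)}|\Omega_{ric}|^2\,dv $$
is a non-decreasing function of $\rho$.

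First I would run the vanishing argument. The hypothesis $\int_{B_{\Phi}(\rho)}|\Omega_{ric}|^2\,dv=o(\rho^{2(m-2)})$ says precisely that $g(\rho)\to 0$ as $\rho\to\infty$. Since $g$ is non-negative and non-decreasing, for every fixed $\rho_0$ we have $g(\rho_0)\le g(\rho)$ for all $\rho\ge\rho_0$, and letting $\rho\to\infty$ forces $g(\rho_0)\le 0$, hence $g(\rho_0)=0$. Therefore $\int_{B_{\Phi}(\rho_0)}|\Omega_{ric}|^2\,dv=0$ for every $\rho_0$, so $\Omega_{ric}\equiv 0$ and $M$ is Ricci flat. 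This step is exactly Corollary 3.7 specialized to $p=2$.

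Next I would upgrade Ricci-flatness to total geodesy through the Gauss equation. By Example 2.4 the metric on $M$ is the one induced by a proper holomorphic embedding $\psi:M\hookrightarrow C^N$, so $M$ is a complete complex (hence K\"ahler) submanifold of the flat space $C^N$. For such a submanifold the Gauss equation, combined with the complex-bilinearity $B(JX,Y)=JB(X,Y)$ of the second fundamental form (compare Example 3.3), shows that the Ricci tensor is a negative semidefinite Hermitian expression in $B$: in a unitary frame $\{\eta_i\}$ one has $Ric(\eta_i,\overline{\eta}_j)=-\sum_k\langle B(\eta_i,\eta_k),B(\eta_j,\eta_k)\rangle$, so $Ric\le 0$ with equality precisely where $B=0$. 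Consequently $\Omega_{ric}\equiv 0$ forces $B\equiv 0$, i.e. $M$ is totally geodesic in $C^N$.

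Finally, a complete, connected, totally geodesic complex submanifold of $C^N$ is an affine complex-linear subspace; carrying the restriction of the flat metric, such a subspace is holomorphically isometric to $C^m$, which is the desired conclusion. The main obstacle I expect is the second step: one must set up the Gauss equation for K\"ahler submanifolds of $C^N$ carefully enough to read off that the Ricci form controls $|B|^2$ with a definite sign, so that its vanishing is equivalent to total geodesy; the first and third steps are then essentially formal. A minor point worth checking is the completeness of $(M,\psi^*g_{C^N})$, which holds because a properly embedded closed submanifold of a complete manifold is complete in the induced metric.
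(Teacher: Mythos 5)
Your proposal is correct and follows essentially the same route as the paper's own proof: monotonicity (Corollary 3.1) plus the growth hypothesis forces $\Omega_{ric}\equiv 0$, then the Gauss equation for a complex submanifold of $C^N$ (where the induced metric comes from the embedding of Example 2.4) forces the second fundamental form to vanish, so $M$ is a totally geodesic, hence affine, complex subspace isometric to $C^m$ --- you merely spell out the sign argument via the Gauss equation and the completeness point that the paper leaves as ``it is clear.'' The only slip is a citation: the vanishing step in the Stein setting is Corollary 3.6 (with $p=2$), not Corollary 3.7, but this is immaterial since you prove the vanishing directly from the monotone quantity $g(\rho)$.
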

\begin{proof}
 Actually, in Example 2.4, $M$ is embedded in some $C^N$ as a closed complex submanifold and endowed with the induced metric. From the growth condition and Corollary 3.1, we derive that $M$ is Ricci flat and thus its scalar curvature is zero.  It is clear that the second fundamental form of $M$ as a submanifold in $C^N$  vanishes identically.  Therefore $M$ is holomorphically isometric to $C^m$.
\end{proof}

By Corollary 3.2, we yield
\begin{corollary}
Let $M^m$ $(dim_C M =m)$ be a complete K\"ahler manifold with a pole. Assume
that the radial curvature of $M$ satisfies one of the following two conditions:

(i) $-\frac{A}{(1+r^2)^{1+\epsilon}} \leq K_r \leq \frac{B}{(1+r^2)^{1+\epsilon}}$ with $\epsilon>0$, $A \geq 0$, $0 \leq B < 2\epsilon$
and $1+ (2m-1)[1-\frac{B}{2\epsilon}]-4e^{A/2\epsilon} > 0$;

(ii) $-\frac{a^2}{1+r^2} \leq K_r \leq \frac{b^2}{1+r^2}$ with $a\geq 0$, $b^2 \in[0,1/4]$ and
$2+ (2m-1)[1+\sqrt{1-4b^2}]-4[1+\sqrt{1+4a^2}] > 0$.

Then $M$ is Ricci flat if it has constant scalar curvature and satisfies the following growth condition:
$$\int_{B_{\Phi}(\rho)}|\Omega_{ric}|^2 dv = o(\rho^{\overline{\lambda}}),$$
where
\begin{equation*}
\overline{\lambda}= \begin{cases}
1 + (2m-1)(1-\frac{B}{2\epsilon})-4e^{A/2\epsilon}] & \text{if $K_r$ satisfies (i),} \\
1 + \frac{(2m-1)[1+\sqrt{1-4b^2}]}{2}-2[1+\sqrt{1+4a^2}] & \text{if $K_r$ satisfies (ii).}
\end{cases}
\end{equation*}
\end{corollary}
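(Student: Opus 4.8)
The plan is to invoke the monotonicity formula of Corollary 3.2 directly and then run the same vanishing argument already used to pass from Theorem 3.1/3.4 to Corollary 3.3. First I would note that the two curvature hypotheses stated here are precisely conditions (ii) and (iii) of Corollary 3.2, and that the prescribed value of $\overline{\lambda}$ coincides with the corresponding two branches appearing there. Since $M$ has constant scalar curvature, Example 3.4 guarantees that the Ricci form $\Omega_{ric}$ is a $J$-invariant $2$-form obeying the conservation law, so Corollary 3.2 applies to $\omega=\Omega_{ric}$ and gives
\begin{equation*}
\frac{1}{\rho_1^{\overline{\lambda}}}\int_{B_{\Phi}(\rho_1)}|\Omega_{ric}|^2\, dv \leq \frac{1}{\rho_2^{\overline{\lambda}}}\int_{B_{\Phi}(\rho_2)}|\Omega_{ric}|^2\, dv
\end{equation*}
for all $0<\rho_1\leq\rho_2$.

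Next I would fix $\rho_1>0$ and let $\rho_2\to\infty$. By the growth hypothesis $\int_{B_\Phi(\rho_2)}|\Omega_{ric}|^2\,dv=o(\rho_2^{\overline{\lambda}})$, the right-hand side of the inequality tends to zero, whence
\begin{equation*}
\frac{1}{\rho_1^{\overline{\lambda}}}\int_{B_{\Phi}(\rho_1)}|\Omega_{ric}|^2\, dv\leq 0.
\end{equation*}
As the integrand is nonnegative, this forces $\int_{B_\Phi(\rho_1)}|\Omega_{ric}|^2\,dv=0$; since $\rho_1>0$ is arbitrary and the sublevel sets $B_\Phi(\rho_1)$ exhaust $M$ by property (2.7), we conclude $\Omega_{ric}\equiv0$ on $M$. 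Finally, because $\Omega_{ric}(X,Y)=Ric(JX,Y)$ and $J$ is a pointwise isomorphism of each tangent space, the identical vanishing of $\Omega_{ric}$ is equivalent to $Ric\equiv0$, i.e.\ $M$ is Ricci flat.

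I do not expect any genuine obstacle: the entire analytic content sits in Corollary 3.2, and what remains is the purely formal implication from a monotonicity inequality together with a subcritical energy growth bound to a vanishing conclusion. The only steps deserving a word of care are the bookkeeping check that the hypotheses and the exponent $\overline{\lambda}$ match the appropriate branches of Corollary 3.2, and the elementary observation that nondegeneracy of $J$ makes the vanishing of the Ricci form equivalent to Ricci flatness. It is also worth remarking, in passing, why case (i) of Corollary 3.2 is not retained here: a strictly negative bound $K_r\leq-\beta^2<0$ would yield $Ric(\partial_r,\partial_r)\leq-(2m-1)\beta^2<0$, so Ricci flatness is impossible in that regime, and only the two asymptotically flat cases are consistent with the conclusion.
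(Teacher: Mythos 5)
Your proposal is correct and follows exactly the paper's route: the paper derives this corollary by applying the monotonicity formula of Corollary 3.2 (whose conditions (ii) and (iii) are precisely the two hypotheses here, with matching $\overline{\lambda}$) to $\omega=\Omega_{ric}$, which is $J$-invariant and satisfies the conservation law by Example 3.4, and then letting $\rho_2\to\infty$ under the growth condition to force $\Omega_{ric}\equiv 0$, hence $Ric\equiv 0$. Your closing remark explaining why case (i) of Corollary 3.2 is omitted (strict negative radial curvature is incompatible with Ricci flatness) is a nice observation not made explicit in the paper, but the core argument is the same.
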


 Some Ricci flatness results were established in \cite{NST} for a compete ALE K\"ahler manifold by assuming nonpositivity or nonnegativity of Ricci curvature and a growth condition of the scalar curvature. On the other hand, if the curvature of a K\"ahler manifold decays like $r^{-2-\epsilon}$, the authors in \cite{MSY,NST} could deduce, under some extra conditions, that $M$ is isometrically biholomorphic to $C^m$. In following, we show that Corollary 3.9 can be used to establish such a kind of results.
\begin{corollary}
Let $M^m$ $(dim_C M=m)$ be a complete K$\ddot{a}$hler manifold whose sectional  curvature dose not change sign. Suppose that $M$ has a pole $x_0$ and
 the radial curvature of $M$ satisfies one of the following  two conditions

(i) $-\frac{a^2}{1+r^2} \leq K_r \leq 0$ with $a\geq 0$  and
$m-[1+\sqrt{1+4a^2}] > 0$;

(ii) $0 \leq K_r \leq \frac{b^2}{1+r^2}$ with  $b^2 \in[0,1/4]$ and
$(2m-1)[1+\sqrt{1-4b^2}]-6> 0$.

Then $M$ is  holomorphically isometric to $C^m$ if it has constant scalar curvature and satisfies the following growth condition:
$$\int_{B_{\Phi}(\rho)}|\Omega_{ric}|^2dv = o(\rho^{\overline{\lambda}}),$$
where
\begin{equation*}
\overline{\lambda}= \begin{cases}
2m-2[1+\sqrt{1+4a^2}]& \text{if $K_r$ satisfies (i),}\\
\frac{(2m-1)[1+\sqrt{1-4b^2}]-6}{2} & \text{if $K_r$ satisfies (ii).}
\end{cases}
\end{equation*}
\end{corollary}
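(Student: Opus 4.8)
The plan is to reduce the statement to the Ricci-flatness conclusion of Corollary 3.9 and then to promote Ricci-flatness to genuine flatness by exploiting the extra hypothesis that the sectional curvature has a fixed sign, before invoking the standard classification of complete simply connected flat K\"ahler manifolds.

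First I would observe that the two curvature hypotheses stated here are precisely the degenerate cases of hypothesis (ii) in Corollary 3.9. Setting $b=0$ there turns $-\frac{a^2}{1+r^2}\le K_r\le\frac{b^2}{1+r^2}$ into $-\frac{a^2}{1+r^2}\le K_r\le 0$; since $\sqrt{1-4b^2}=1$, the admissibility inequality $2+(2m-1)[1+\sqrt{1-4b^2}]-4[1+\sqrt{1+4a^2}]>0$ collapses to $m-[1+\sqrt{1+4a^2}]>0$, and the growth order $1+\frac{(2m-1)[1+\sqrt{1-4b^2}]}{2}-2[1+\sqrt{1+4a^2}]$ reduces to $2m-2[1+\sqrt{1+4a^2}]$, which is exactly case (i). Symmetrically, setting $a=0$ (so $\sqrt{1+4a^2}=1$) recovers case (ii), with the admissibility inequality becoming $(2m-1)[1+\sqrt{1-4b^2}]-6>0$ and $\overline{\lambda}=\frac{(2m-1)[1+\sqrt{1-4b^2}]-6}{2}$. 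As $M$ has constant scalar curvature and the prescribed growth condition on $\int_{B_\Phi(\rho)}|\Omega_{ric}|^2\,dv$ holds, Corollary 3.9 applies directly and shows that $M$ is Ricci flat.

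Next I would upgrade Ricci-flatness to flatness using the sign of the sectional curvature, which is the one genuinely new ingredient here. For a unit vector $X$ at any point, completing $X$ to an orthonormal frame $\{X,e_2,\dots,e_{2m}\}$ gives $Ric(X,X)=\sum_{i\ge 2}K(X,e_i)$. In case (ii) every sectional curvature is nonnegative, so with $Ric\equiv 0$ each summand must vanish; letting $X$ range shows $K\equiv 0$. The identical argument applies in case (i), where every sectional curvature is nonpositive. Thus the definite-sign hypothesis forces the whole curvature tensor to vanish, i.e. $M$ is flat; note that only the definiteness of the sign, not its direction, is used.

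Finally, since $M$ has a pole $x_0$, the exponential map $\exp_{x_0}:T_{x_0}M\to M$ is a diffeomorphism, so $M$ is diffeomorphic to $R^{2m}$ and in particular complete and simply connected. A complete simply connected flat Riemannian manifold is isometric to $R^{2m}$; because $M$ is K\"ahler its complex structure $J$ is parallel, and on a flat simply connected space parallel transport is trivial, so $J$ corresponds to a constant linear complex structure, identifying $(M,g,J)$ holomorphically and isometrically with $C^m$. I expect the main obstacle to be the second step: one must be careful to apply the sign hypothesis correctly and, in the K\"ahler setting, to record that tracing the curvature tensor against an orthonormal frame genuinely reproduces the Ricci tensor, so that sign-definiteness of all sectional curvatures together with $Ric\equiv 0$ really forces them all to vanish.
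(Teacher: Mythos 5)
Your proposal is correct and follows exactly the paper's own route: apply Corollary 3.9 (whose hypotheses you correctly verify are recovered by setting $b=0$, resp. $a=0$) to get Ricci flatness, use the sign-definiteness of the sectional curvature together with the trace identity $Ric(X,X)=\sum_i K(X,e_i)$ to conclude flatness, and then identify the complete flat K\"ahler manifold with a pole holomorphically and isometrically with $C^m$. The paper states these three steps in one line; your write-up simply supplies the details the authors left implicit.
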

\begin{proof}
As in Corollary 3.9, $M$ is Ricci flat. Since the curvature of $M$ does not change sign, we deduce that $M$ is flat. It follows that $M$ is holomorphically isometric to $C^m$.
\end{proof}

\section{Monotonicity Formulae for volumes }

In this section, we establish monotonicity formulae for volumes, especially for the volumes of minimal submanifolds whose
outer spaces poss some suitable exhaustion functions.

To estimate the volumes, we introduce the following notion. Let $(N^n, g)$ be a Riemannian manifold and let $\Phi$
be a Lipschitz continuous function on $N^n$ satisfying:

(4.1) $\Phi \geq 0$ and $\Phi$ is an exhaustion function of $M$;

(4.2) $\Psi=\Phi^2$ is of class $C^\infty$ and $\Psi$ has only discrete critical points;

(4.3) Fix an integer $m \leq n$, the constant $\widehat{k}_1= \underset{x \in M}{\inf}\{\lambda_1(x)+ \cdots + \lambda_m(x)\} \geq 0$,
where $\lambda_1(x) \leq \lambda_2(x) \leq \cdots \leq \lambda_n(x)$ are the eigenvalues of $Hess(\Psi)$, and the
constant $k_2= \underset{x \in M}{\sup}|\nabla \Phi|^2$ is finite. Set
\begin{equation}
\tag{4.4}
\widehat{\lambda}= \frac{\widehat{k}_1}{2k_2}.
\end{equation}
Notice that (4.3) implies in particular that $\Psi$ is strictly $m$-subharmonic. If $m=n$, then $\Phi$ becomes
a real $0$-exhaustion function in the sense of $\S2$.
\begin{theorem}
Let $(N^n, g)$ be a Riemannian manifold with an exhaustion function satisfying (4.1), (4.2) and (4.3). Let $B_{\Phi}(t) \subset N$
be the sublevel set of $\Phi$. Suppose $i: M^m \rightarrow N^n$ is a minimal submanifold of dimension $m$, then
\begin{equation*}
\frac{Vol(M\cap B_{\Phi}(\rho_1))}{\rho_1^{\widehat{\lambda}}} \leq \frac{Vol(M\cap B_{\Phi}(\rho_2))}{\rho_2^{\widehat{\lambda}}}
\end{equation*}
for any $0 < \rho_1 \leq \rho_2$, where $\widehat{\lambda}$ is given by (4.4).
\end{theorem}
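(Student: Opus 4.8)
The plan is to specialize the integral-formula technique behind Theorem 3.1 to the case $p=0$, taking the base manifold to be $M$ itself and $\omega\equiv c$ a nonzero constant section of the trivial line bundle over $M$. Such an $\omega$ trivially satisfies the conservation law $divS_\omega=0$, and by (2.2) its stress-energy tensor is $S_\omega=\frac{c^2}{2}g$, where $g$ now denotes the induced metric on $M$. Set $\Psi=(\Phi\circ i)^2$ and take the vector field $X=\frac12\nabla^M\Psi$, where $\nabla^M$ is the intrinsic gradient on $M$; then $\nabla\theta_X=\frac12 Hess^M(\Psi)$ and, exactly as in (3.2), $\langle S_\omega,\nabla\theta_X\rangle=\frac{c^2}{4}\Delta^M\Psi$, where $\Delta^M$ is the intrinsic Laplacian. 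Applying the integral formula (2.6) on $D=M\cap B_\Phi(\rho)$ will reduce the theorem to two estimates: a pointwise lower bound for $\Delta^M\Psi$ coming from (4.3), and an upper bound for the boundary integrand coming from the bound on $|\nabla\Phi|$.

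The decisive step is to convert the condition (4.3), stated in terms of the ambient Hessian $Hess(\Psi)$ on $N$, into an intrinsic lower bound for $\Delta^M\Psi$. For this I would use the composition formula for the Hessian of a restricted function (the same one invoked in Example 2.4): for $X,Y\in T_xM$,
$$Hess^M(\Psi)(X,Y)=Hess^N(\Psi)(X,Y)+\langle\nabla^N\Psi,B(X,Y)\rangle,$$
where $B$ is the second fundamental form of $M$ in $N$. Tracing over an orthonormal frame $\{e_a\}_{a=1}^m$ of $T_xM$ and using that $M$ is minimal, so that the mean curvature vector $\frac1m\sum_a B(e_a,e_a)$ vanishes, gives
$$\Delta^M\Psi=\sum_{a=1}^m Hess^N(\Psi)(e_a,e_a).$$
The right-hand side is the trace of the symmetric form $Hess^N(\Psi)$ restricted to the $m$-plane $T_xM\subset T_xN$, so by the min-max (Ky Fan) characterization of eigenvalues it is bounded below by the sum $\lambda_1(x)+\cdots+\lambda_m(x)$ of the $m$ smallest eigenvalues of $Hess^N(\Psi)$, hence by $\widehat{k}_1$ via (4.3). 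Therefore $\langle S_\omega,\nabla\theta_X\rangle\geq\frac{c^2}{4}\widehat{k}_1$ on $D$.

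For the boundary term, on a regular level set $\partial D=M\cap\partial B_\Phi(\rho)$ with outward unit conormal $\nu$ in $M$ one has $\nabla^M\Psi=2\rho\,\nabla^M\Phi$, so $S_\omega(X,\nu)=\frac{c^2}{2}\rho\langle\nabla^M\Phi,\nu\rangle\leq\frac{c^2}{2}\rho\sqrt{k_2}$, using $|\nabla^M\Phi|\leq|\nabla^N\Phi|\leq\sqrt{k_2}$ (the intrinsic gradient is the tangential projection of the ambient one). Writing $V(\rho)=Vol(M\cap B_\Phi(\rho))$ and feeding both estimates into (2.6) yields $\frac{\widehat{k}_1}{2}V(\rho)\leq\rho\sqrt{k_2}\,Area(\partial D)$; the coarea formula on $M$ together with $|\nabla^M\Phi|\leq\sqrt{k_2}$ gives $Area(\partial D)\leq\sqrt{k_2}\,V'(\rho)$, and combining these produces the differential inequality $V'(\rho)/V(\rho)\geq\widehat\lambda/\rho$ with $\widehat\lambda=\widehat{k}_1/2k_2$. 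Integrating over $[\rho_1,\rho_2]$ then gives the asserted monotonicity, just as at the end of the proof of Theorem 3.1.

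The main obstacle is genuinely step two: passing from the ambient convexity condition (4.3) to an intrinsic inequality on $M$. This is exactly where minimality is used, to discard the mean-curvature term $\langle\nabla^N\Psi,\sum_a B(e_a,e_a)\rangle$, and where one must be careful that $T_xM$ is an arbitrary $m$-plane rather than an eigenspace, so that only the sum of the smallest $m$ eigenvalues — and not $m$ times the smallest — is available. The remaining points are routine: one restricts to regular values of $\Phi|_M$ (available for almost every $\rho$ by Sard's theorem) so that $\partial D$ is smooth with $|\nabla^M\Phi|>0$, and one takes $D$ relatively compact, guaranteed since $B_\Phi(\rho)$ is relatively compact in $N$, so the divergence theorem underlying (2.6) applies. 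The constant $c$ cancels throughout, so one may simply set $c=1$.
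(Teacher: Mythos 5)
Your proposal is correct and follows essentially the same route as the paper: the paper likewise restricts $\Phi$ to $M$, uses the composition formula (4.5) together with minimality to drop the second-fundamental-form term, bounds the intrinsic eigenvalue sum below by $\widehat{k}_1$ via the same min-max comparison, and then invokes Theorem 3.1 with $p=0$ and $\omega\equiv 1$ (you merely inline that theorem's proof rather than cite it). The only cosmetic difference is that the paper phrases the conclusion as ``$\rho$ is a real $0$-exhaustion function with growth order $\widehat{\lambda}$'' before applying Theorem 3.1, whereas you rerun the integral formula and coarea argument directly.
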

 \begin{proof}
 Let $\rho: M \rightarrow R^+$  be the restriction of $\Phi$ to $M$. Denote by $D$ and $\nabla$
the covariant derivatives of $N^n$ and $M^m$ respectively. By the composition formula (cf. \cite{EL}), we have
\begin{align*}
 \nonumber \nabla d(\Phi^2\circ i)(X,Y)&= (\nabla d\Phi^2)(di(X), di(Y)) + d\Phi^2[(\nabla di)(X,Y)] \\
&= (\nabla d\Psi)(X, Y) + <A(X,Y), \nabla \Psi> \tag{$4.5$}
\end{align*}
for any $X,Y \in TM$, where $A(X,Y)=(D_XY)^\top$.

 Take $\rho$ as an exhaustion function on $M$. By definition of $\widehat{k}_1$ and the minimality of $M$, we get from (4.5) that
\begin{eqnarray*}
{k}_1 &=& \underset{x\in M}{\inf}\{\widetilde{\lambda}_1(x)+ \cdots + \widetilde{\lambda}_m(x)\}\\
&\geq& \underset{x\in M}{\inf}\{\lambda_1(x)+ \cdots + \lambda_m(x)\} \\
&=& \widehat{k}_1,
\end{eqnarray*}
where $\widetilde{\lambda}_1(x) \leq \widetilde{\lambda}_2(x) \leq \cdots \leq \widetilde{\lambda}_n(x)$
are the eigenvalues of $Hess(\rho^2)$.
Thus ${k}_1$ is positive and $\rho$ is a real $0$-exhaustion function.

Now we choose $\omega= 1\in A^0(M\times R)$ which clearly satisfies the conservation law by (2.3).
From the proof of Theorem 3.1, we see that $k_2$ can be any positive number larger than $\sup_{x\in M}|\nabla \rho|^2$
and $k_1$ can be any positive number less than $\underset{x\in M}{\inf}\{\widetilde{\lambda}_1(x)+ \cdots + \widetilde{\lambda}_m(x)\}$.

By definition of  $k_2$, we have
\begin{equation*}
|\nabla \rho| = |\nabla \Phi|\leq k_2.
\end{equation*}
 Hence the exhaustion function $\rho$ has growth order $\widehat{\lambda}$.
Therefore Theorem 3.1 yields
\begin{equation*}
\frac{Vol(M\cap B_{\Phi}(\rho_1))}{\rho_1^{\widehat{\lambda}}} \leq \frac{Vol(M\cap B_{\Phi}(\rho_2))}{\rho_2^{\widehat{\lambda}}}.
\end{equation*}
\end{proof}
 \begin{remark}
 (1) From the proof of Theorem 5.1, we see that if $m-2-mp\delta >0$ (resp. $m-2-2p\delta >0$) for a complete submanifold
 with parallel mean curvature  (resp. for a complete minimal submanifold) in $R^N$, then $\rho^2$ is a real $p$-exhaustion function.
 (2) If we consider the identity immersion $x=I: N \rightarrow N$ in Theorem 4.1, then we get in particular the following
 \begin{equation*}
 \frac{Vol( B_{\Phi}(\rho_1))}{\rho_1^{\widehat{\lambda}}} \leq \frac{Vol( B_{\Phi}(\rho_2))}{\rho_2^{\widehat{\lambda}}}
\end{equation*}
 for any $0 < \rho_1 \leq \rho_2$.
 \end{remark}

\begin{corollary}
Let $(N^n, g)$ be a Riemannian manifold with an exhaustion function satisfying (4.1), (4.2) and (4.3) outside a sublevel set
$B_{\Phi}(R_0)$ for some $R_0 > 0$.
Suppose $M^m \rightarrow N^n$ is a minimal submanifold and $x_0 \in M$.
Let $B_{x_0}(r) \subset N$ be the geodesic ball of radius $r$
centered at $x_0$. Then
$$\frac{Vol(M\cap (B_{x_0}(\rho_1)- B_{x_0}(R_0)))}{\rho_1^{\widehat{\lambda}}}\leq
\frac{Vol(M\cap (B_{x_0}(\rho_2)- B_{x_0}(R_0)))}{\rho_2^{\widehat{\lambda}}}$$
 for any $0 < R_0 \leq \rho_1< \rho_2 $.
\end{corollary}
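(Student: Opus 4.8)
The plan is to run the proof of Theorem 4.1 with a single change: feed the restricted exhaustion function into the localized monotonicity formula (Theorem 3.2) rather than the global one (Theorem 3.1). First I would set $\rho=\Phi\circ i:M\to R^+$, the restriction of $\Phi$ to the minimal submanifold. Its sublevel sets on $M$ are exactly $B_\rho(t)=\{x\in M:\Phi(i(x))<t\}=M\cap B_\Phi(t)$, which coincide with the intersections $M\cap B_{x_0}(t)$ of geodesic balls once $\Phi$ is taken to be the extrinsic distance from $x_0$. Thus the sets occurring in the statement are just the annular regions $B_\rho(\rho_j)\setminus B_\rho(R_0)$ for $\rho$ on $M$.

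Next I would localize the eigenvalue computation of Theorem 4.1 to the region outside $B_\Phi(R_0)$. From the composition formula (4.5),
$$\nabla d(\Phi^2\circ i)(X,Y)=(\nabla d\Psi)(X,Y)+\langle A(X,Y),\nabla\Psi\rangle,\qquad X,Y\in TM,$$
and the minimality of $M$ (so that $\sum_{i=1}^m A(e_i,e_i)=0$ and the second fundamental form term drops out upon tracing), one gets at each point of $M\setminus B_\rho(R_0)$
$$\widetilde\lambda_1(x)+\cdots+\widetilde\lambda_m(x)=\sum_{i=1}^m (Hess\,\Psi)(e_i,e_i)\ \geq\ \lambda_1(x)+\cdots+\lambda_m(x),$$
where the $\widetilde\lambda_i$ are the eigenvalues of $Hess(\rho^2)$ on $M$, the $\lambda_i$ those of $Hess(\Psi)$ on $N$, and the last inequality is the variational bound that the trace over the tangent $m$-plane dominates the sum of the $m$ smallest eigenvalues. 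Taking the infimum over $M\setminus B_\rho(R_0)$ and invoking (4.3) over $N\setminus B_\Phi(R_0)$ gives $\inf\{\widetilde\lambda_1+\cdots+\widetilde\lambda_m\}\geq\widehat k_1(R_0)\geq 0$ together with $\sup|\nabla\rho|^2\leq k_2(R_0)$, so $\rho$ is a real $0$-exhaustion function on $M\setminus B_\rho(R_0)$ with growth order $\widehat\lambda=\widehat k_1(R_0)/2k_2(R_0)$ as in (4.4), the case $\widehat\lambda=0$ rendering the conclusion trivial.

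I would then take the constant section $\omega=1\in A^0(M\times R)$, which satisfies the conservation law by (2.3). Because $i_\nu\omega=0$ for a $0$-form, the boundary hypothesis (3.4) of Theorem 3.2, namely $\frac{|\omega|^2}{2}\geq |i_\nu\omega|^2$ on $\partial B_\rho(R_0)$, reduces to $\frac12\geq 0$ and holds automatically. Applying Theorem 3.2 with $p=0$ therefore yields
$$\frac{1}{\rho_1^{\widehat\lambda}}\int_{B_\rho(\rho_1)\setminus B_\rho(R_0)}1\,dv\leq\frac{1}{\rho_2^{\widehat\lambda}}\int_{B_\rho(\rho_2)\setminus B_\rho(R_0)}1\,dv,$$
which is exactly the claimed inequality after identifying $\int 1\,dv$ with $Vol(M\cap(B_{x_0}(\cdot)\setminus B_{x_0}(R_0)))$.

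The only delicate point, and hence the main thing to verify, is that the minimality cancellation in (4.5) and the eigenvalue comparison hold uniformly on the noncompact exterior region $M\setminus B_\rho(R_0)$, so that $\widehat k_1(R_0)$ is a genuine nonnegative infimum there and $\widehat\lambda$ is well defined; once this localized estimate is secured, the remainder is a verbatim transcription of the Theorem 4.1 argument with Theorem 3.2 replacing Theorem 3.1.
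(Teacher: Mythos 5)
Your proposal is correct and follows exactly the route the paper intends: its own proof is the one-line remark ``By the proof of Theorem 3.2 and Theorem 4.1, we can complete the proof of this corollary,'' and your argument is precisely that combination --- restrict $\Phi$ to $M$, use (4.5) plus minimality and the variational eigenvalue bound to get a real $0$-exhaustion function outside $B_\rho(R_0)$, take $\omega=1\in A^0(M\times R)$ so that the boundary hypothesis (3.4) of Theorem 3.2 is automatic, and apply Theorem 3.2 with $p=0$. Your explicit treatment of the borderline case $\widehat{k}_1(R_0)=0$ (where the conclusion is trivial) is a detail the paper glosses over, but it does not change the approach.
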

\begin{proof}
 By the proof of Theorem 3.2 and Theorem 4.1, we can complete the proof of this corollary.
 \end{proof}

When  choosing the distance function to be the exhaustion function, we can get the following lemma of growth order.
\begin{lemma}
Let $(N^n, g)$ be a complete Riemannian manifold with a pole  $x_0$ and let $r$ be the distance function relative to $x_0$.
 Assume that the radial curvature  $K_r$ of $N$ satisfies one of the following conditions:

(i)  $ K_r \leq -\beta^2$ with  $\beta> 0$;

(ii)  $ K_r \leq \frac{B}{(1+r^2)^{1+\epsilon}}$ with $\epsilon>0$,  $0 \leq B < 2\epsilon$;

(iii)  $ K_r \leq \frac{b^2}{1+r^2}$ with  $b^2 \in[0,1/4]$.\\
Then $r^2$ is an exhaustion function satisfying  (4.1), (4.2), (4.3) and
\begin{equation}
\tag{4.6}
\widehat{\lambda}= \begin{cases}
1+(m-1)\beta r\coth(\beta r) & \text{if $K_r$ satisfies (i),} \\
m(1-\frac{B}{2\epsilon}) & \text{if $K_r$ satisfies (ii),} \\
\frac{m(1+\sqrt{1-4b^2})}{2} & \text{if $K_r$ satisfies (iii).}
\end{cases}
\end{equation}

\end{lemma}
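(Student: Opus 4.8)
The plan is to take $\Phi=r$, $\Psi=r^{2}$ and verify the structural conditions (4.1)--(4.3) directly, reading off $\widehat{\lambda}=\widehat{k}_{1}/(2k_{2})$ at the end. Conditions (4.1) and (4.2) come essentially for free. Completeness of $N$ makes each sublevel set $\{r^{2}<t\}$ closed and bounded, hence compact, so $r^{2}$ is a nonnegative exhaustion function; the pole $x_{0}$ makes $r$ smooth on $N\setminus\{x_{0}\}$ and $r^{2}$ smooth on all of $N$; and since $\nabla(r^{2})=2r\nabla r$ with $|\nabla r|=1$ off $x_{0}$, the sole critical point of $r^{2}$ is $x_{0}$, so the critical set is discrete. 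The same computation gives $k_{2}=\sup_{N}|\nabla r|^{2}=1$, whence $\widehat{\lambda}=\widehat{k}_{1}/2$.

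The heart of the argument is the lower bound on $\widehat{k}_{1}=\inf_{x}\{\lambda_{1}+\cdots+\lambda_{m}\}$. The key observation is that each hypothesis (i)--(iii) imposes only an \emph{upper} bound on the radial curvature $K_{r}$, which is exactly the input needed for the one-sided Hessian comparison producing a \emph{lower} bound on $Hess(r)$; thus I invoke only the lower-bound half of Lemma 2.2. Writing $h_{1}(r)$ for the model coefficient in each case, namely $\beta\coth(\beta r)$ in (i), $\tfrac{1}{r}\bigl(1-\tfrac{B}{2\epsilon}\bigr)$ in (ii), and $\tfrac{1}{2r}\bigl(1+\sqrt{1-4b^{2}}\bigr)$ in (iii), I get
$$Hess(r)\geq h_{1}(r)\,[g-dr\otimes dr].$$
Combining this with $Hess(r^{2})=2\,dr\otimes dr+2r\,Hess(r)$ yields the pointwise form inequality
$$Hess(r^{2})\geq 2\,dr\otimes dr+2rh_{1}(r)\,[g-dr\otimes dr],$$
whose eigenvalues are $2$ in the radial direction and $2rh_{1}(r)$ with multiplicity $n-1$ in the orthogonal directions.

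To extract $\widehat{k}_{1}$ I use that $Hess(r^{2})$ dominates this comparison form, so by eigenvalue monotonicity the sum of its $m$ smallest eigenvalues is at least the corresponding sum for the comparison form; this is where the three formulas split, according to the sign of $2rh_{1}(r)-2$. In (ii) and (iii) the tangential value $2rh_{1}(r)$ is the \emph{constant} $2(1-\tfrac{B}{2\epsilon})$, respectively $1+\sqrt{1-4b^{2}}$, each at most $2$, so the $m$ smallest eigenvalues are the tangential ones and $\widehat{k}_{1}\geq 2mrh_{1}(r)$, giving $\widehat{\lambda}=m(1-\tfrac{B}{2\epsilon})$, respectively $\tfrac{m(1+\sqrt{1-4b^{2}})}{2}$. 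In (i) one has $2rh_{1}(r)=2\beta r\coth(\beta r)\geq 2$, so the radial eigenvalue $2$ is the smallest and the remaining $m-1$ are tangential, whence $\widehat{k}_{1}\geq 2+(m-1)\,2\beta r\coth(\beta r)$ and $\widehat{\lambda}\geq 1+(m-1)\beta r\coth(\beta r)$. In every case $\widehat{k}_{1}>0$, so (4.3) holds.

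I expect the main obstacle to be the bookkeeping in this last step rather than any deep estimate: one must correctly invoke the \emph{one-sided} comparison (upper curvature bound $\Rightarrow$ lower Hessian bound) and then order the radial eigenvalue $2$ against the tangential bound $2rh_{1}(r)$, since it is precisely this ordering that produces the three distinct expressions. The monotonicity of $\beta r\coth(\beta r)$, increasing from its limit $1$ as $r\to 0$, is what both keeps $\widehat{k}_{1}$ positive and explains why the growth order in case (i) retains its dependence on $r$.
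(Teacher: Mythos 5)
Your proof is correct and follows essentially the same route as the paper's: the identity $Hess(r^2)=2\,dr\otimes dr+2r\,Hess(r)$, the one-sided Hessian comparison from the upper radial curvature bound, $k_2=|\nabla r|^2=1$, and the ordering of the radial eigenvalue $2$ against the tangential value $2rh_1(r)$ to read off $\widehat{k}_1$. The only difference is that you work out cases (ii) and (iii) explicitly (and flag the eigenvalue-ordering subtlety), whereas the paper treats only case (i) and declares the others similar.
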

\begin{proof}
(i) By the Hessian comparison theorem
\begin{eqnarray*}
Hess(r^2)&=& 2dr \otimes dr + 2rHess(r) \\
&\geq&  2dr \otimes dr + 2r\beta r\coth(\beta r)(g - dr \otimes dr)
\end{eqnarray*}
in the  sense of quadratic forms and $\beta r\coth(\beta r) \geq 1$.
Thus we have $\widehat{k}_1=\underset{x \in M}{\inf}\{\lambda_1(x)+ \cdots + \lambda_m(x)\}= 2+2(m-1)\beta r\coth(\beta r)$ and
$k_2=|\nabla r|^2=1$. Therefore $\widehat{\lambda}=1+(m-1)\beta r\coth(\beta r)$.

The other cases can be proved similarly.
\end{proof}

\begin{theorem}
Let $(N^n, g)$ be a complete Riemannian manifold with a pole $x_0$  and let $r$ be the distance function relative to $x_0$.
Assume that the radial curvature  $K_r$ of $N$ satisfies one of the following conditions:

(i)  $ K_r \leq -\beta^2$ with  $\beta> 0$;

(ii)  $ K_r \leq \frac{B}{(1+r^2)^{1+\epsilon}}$ with $\epsilon>0$,  $0 \leq B < 2\epsilon$;

(iii)  $ K_r \leq \frac{b^2}{1+r^2}$ with  $b^2 \in[0,1/4]$.\\
Suppose $i: M^m \rightarrow N^n$ is a minimal submanifold and $x_0 \in M$. Let $B_{x_0}(r) \subset N$ be the geodesic ball of radius r
centered at $x_0$. Then
$$\frac{Vol(M\cap B_{x_0}(\rho_1))}{\rho_1^{\widehat{\lambda}}} \leq
 \frac{Vol(M\cap B_{x_0}(\rho_2))}{\rho_2^{\widehat{\lambda}}},$$
for any $0 <  \rho_1< \rho_2 $, where $\widehat{\lambda}$ is given by (4.6).
\end{theorem}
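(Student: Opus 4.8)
The plan is to read this off from the general volume monotonicity formula of Theorem 4.1, by checking that under each of the three radial curvature hypotheses the squared extrinsic distance function $r^2$ is an admissible exhaustion function on the ambient manifold $N^n$. In other words, the analytic content is already packaged in Theorem 4.1, and the only remaining task is to verify hypotheses (4.1), (4.2), (4.3) for the choice $\Phi = r$, which is precisely what Lemma 4.1 supplies. So at the level of strategy this theorem is the specialization of Theorem 4.1 to the distance-function exhaustion, with Lemma 4.1 computing the growth order $\widehat{\lambda}$ in each curvature regime.

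Concretely, I would set $\Phi = r$ and $\Psi = \Phi^2 = r^2$. Since $N$ has a pole at $x_0$, the function $r$ is smooth on $N \setminus \{x_0\}$, is nonnegative and proper, and $r^2$ is smooth with $x_0$ as its only critical point, so (4.1) and (4.2) hold. By Lemma 4.1, each of the bounds (i), (ii), (iii) forces, through the Hessian comparison theorem, the lower estimate on $\lambda_1 + \cdots + \lambda_m$ required by (4.3), together with $k_2 = |\nabla r|^2 = 1$ and the value of $\widehat{\lambda}$ recorded in (4.6). Because the sublevel sets satisfy $B_\Phi(t) = \{r < t\} = B_{x_0}(t)$, applying Theorem 4.1 with this $\Phi$ and the minimal immersion $i : M^m \to N^n$ produces
$$\frac{Vol(M \cap B_{x_0}(\rho_1))}{\rho_1^{\widehat{\lambda}}} \leq \frac{Vol(M \cap B_{x_0}(\rho_2))}{\rho_2^{\widehat{\lambda}}}$$
verbatim. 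In cases (ii) and (iii) the exponent $\widehat{\lambda}$ from (4.6) is a genuine constant, so this single invocation of Theorem 4.1 settles those cases at once.

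The delicate point, which I expect to be the main obstacle, is case (i), where $\widehat{\lambda} = 1 + (m-1)\beta r \coth(\beta r)$ depends on $r$ and is only a pointwise bound, rather than the honest global infimum that Theorem 4.1 needs for a fixed exponent. Here I would not quote Theorem 4.1 directly but instead return to the differential-inequality step in the proofs of Theorem 3.1 and Theorem 4.1. Writing $V(t) = Vol(M \cap B_{x_0}(t))$ and using the minimality of $M$ with the composition formula (4.5), the integral identity (2.6) applied to $\omega \equiv 1$ yields a radial inequality of the form $2t\, V'(t) \geq \int_0^t \kappa(s)\, V'(s)\, ds$ with $\kappa(s) = 2 + 2(m-1)\beta s \coth(\beta s)$. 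The difficulty is precisely that $\kappa$ increases in $s$, so one cannot simply factor a constant through the coarea step; converting this into the asserted inequality with the radial exponent $\widehat{\lambda}$ (equivalently, into a $\sinh$-weighted density comparison) is the step that needs genuine care, whereas the weaker constant exponent $\widehat{\lambda} = m$, coming from $\beta t \coth(\beta t) \geq 1$, drops out immediately.
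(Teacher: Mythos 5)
Your strategy is exactly the paper's proof: the paper's entire argument for this theorem reads ``By Theorem 4.1 and Lemma 4.1, we can immediately get the theorem,'' i.e.\ precisely the specialization you describe, with $\rho = r\circ i$ and Lemma 4.1 supplying (4.1)--(4.3). For cases (ii) and (iii), where the exponent in (4.6) is an honest constant, your single invocation of Theorem 4.1 is complete and coincides with the paper's argument.

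The obstacle you isolate in case (i) is genuine, and you should know that the paper does not overcome it --- it overlooks it. Definition (4.3) requires the \emph{constant} $\widehat{k}_1=\inf_{x}\{\lambda_1(x)+\cdots+\lambda_m(x)\}$, yet Lemma 4.1(i) writes $\widehat{k}_1=2+2(m-1)\beta r\coth(\beta r)$, equating an infimum with a function of $r$, and Theorem 4.2(i) inherits this radius-dependent ``exponent.'' Worse, no amount of care can rescue the statement as literally written: for a totally geodesic hyperbolic subspace $H^m\subset H^n$ of curvature $-\beta^2$ one has $\log Vol(M\cap B_{x_0}(\rho))=(m-1)\beta\rho+O(1)$, while $\widehat{\lambda}(\rho)\log\rho\sim(m-1)\beta\rho\log\rho$, so the ratio $Vol(M\cap B_{x_0}(\rho))/\rho^{\widehat{\lambda}(\rho)}$ tends to $0$ and is eventually decreasing; the radial-exponent claim is strictly stronger than (not equivalent to) the sharp $\sinh$-weighted density comparison, and it is false. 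The correct conclusion in case (i) is the one you observe drops out immediately: since $\beta r\coth(\beta r)\downarrow 1$ as $r\to 0$, the infimum gives $\widehat{k}_1=2m$, $k_2=1$, hence $\widehat{\lambda}=m$, the classical Cartan--Hadamard monotonicity. If one wants an exponent that improves with the radius, the right vehicle in this paper is not Theorem 4.1 but the annular version (Theorem 3.2, equivalently Corollary 4.1): for $\omega\equiv 1$ the condition (3.4) is vacuous because $i_\nu\omega=0$, and for $R_0<\rho_1\leq\rho_2$ one obtains monotonicity of $Vol\bigl(M\cap(B_{x_0}(\rho)\setminus B_{x_0}(R_0))\bigr)/\rho^{\lambda(R_0)}$ with the constant $\lambda(R_0)=1+(m-1)\beta R_0\coth(\beta R_0)$. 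So your proposal matches the paper where the paper is right, and the ``delicate point'' you refused to wave through is precisely where the paper's own statement needs to be corrected.
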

\begin{proof}
 Let $\rho : M\rightarrow R^+$ be the extrinsic
distance function of $M$ relative to  $x_0$, that is, $\rho(x)=r\circ i(x)$ for $x\in M$. By Theorem 4.1 and Lemma 4.1, we can immediately
get the theorem.
\end{proof}

When outer space is Euclidean space, we can recapture the classical Monotonicity formulae of
the volume of minimal submanifolds.
\begin{corollary}
(cf. \cite{CM}) Suppose that $M\subset R^n$ is a minimal submanifold and the origin $o\in M$. Let $B(r)$ be the Euclidean ball of radius $r$
centered at $o$, then for all $0 < \rho_1< \rho_2$
$$\frac{Vol(M\cap B(\rho_1))}{\rho_1^m}\leq \frac{Vol(M\cap B(\rho_2))}{\rho_2^m}$$
with equality  if and only if $M$ is an affine $m$-plane.
\end{corollary}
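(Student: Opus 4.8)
The plan is to derive this corollary as the special case of Theorem 4.2 where the ambient manifold is $N^n = R^n$ with its flat metric. First I would observe that $R^n$ is a complete Riemannian manifold with a pole at the origin $o$, and that its radial sectional curvature satisfies $K_r \equiv 0$. This identically-zero curvature meets condition (iii) of Theorem 4.2 with the choices $a = 0$ and $b = 0$ (so that $b^2 \in [0,1/4]$); indeed, taking $\Phi$ to be the Euclidean distance function $r(x) = |x|$ gives the extrinsic distance function $\rho = r \circ i$ on $M$, and the geodesic balls $B_{x_0}(r)$ in $R^n$ are precisely the Euclidean balls $B(r)$. With $b^2 = 0$ we have $\sqrt{1 - 4b^2} = 1$, so the growth order from (4.6) becomes $\widehat{\lambda} = \frac{m(1 + 1)}{2} = m$. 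Thus Theorem 4.2 delivers exactly the monotonicity inequality
\begin{equation*}
\frac{Vol(M \cap B(\rho_1))}{\rho_1^m} \leq \frac{Vol(M \cap B(\rho_2))}{\rho_2^m}
\end{equation*}
for all $0 < \rho_1 < \rho_2$, which is the asserted inequality.

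It remains to address the equality case, which Theorem 4.2 itself does not treat and which is the genuinely new content here. The strategy is to trace back through the chain of inequalities used to prove Theorems 3.1, 4.1 and 4.2, and identify what equality forces. The monotonicity comes from integrating the differential inequality $\lambda \int |\omega|^2 \, dv \leq r \frac{d}{dr} \int |\omega|^2 \, dv$ with $\omega \equiv 1$, so $|\omega|^2 = 1$ and the volume growth becomes relevant. Equality of the normalized volume ratios over an interval forces equality in every intermediate estimate pointwise on the corresponding annulus. The two places where slack was introduced are: the Hessian comparison bound $Hess(r^2) \geq 2\, dr \otimes dr + 2r\beta r \coth(\beta r)(g - dr \otimes dr)$ specialized to the flat case (where it becomes the exact identity $Hess(|x|^2) = 2\,\mathrm{Id}$, so this step is actually an equality in $R^n$ and contributes nothing), and the Cauchy--Schwarz-type estimate $S_\omega(X, \nu) \leq \frac{\sqrt{k_2}}{2} r |\omega|^2$ together with the minimality inequality $\widetilde{k}_1 \geq \widehat{k}_1$ coming from the term $\langle A(X,Y), \nabla \Psi \rangle$ in the composition formula (4.5).

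The main obstacle will be extracting geometric rigidity from these equality conditions. Concretely, for the submanifold $M$ in flat $R^n$ the restricted function is $\rho^2 = |x|^2|_M$, and (4.5) gives $Hess_M(\rho^2)(X,Y) = 2\langle X, Y\rangle + 2\langle A(X,Y), x\rangle$, where the first term is the trace-part contribution and the second involves the second fundamental form $A$ and the position vector $x$. The key point is that equality in the monotonicity ratio over $[\rho_1, \rho_2]$ should force $\langle S_\omega(X,\nu)\rangle$ to attain its bound, which here means the radial vector field $X = \frac{1}{2}\nabla^M \rho^2$ must be everywhere tangent to $M$ and of the expected length, i.e. the position vector $x$ is tangent to $M$ at every point of the annulus. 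Equivalently, the differential inequality becoming an equality forces $Hess_M(\rho^2) = 2g$, which via (4.5) gives $\langle A(X,Y), x\rangle = 0$ for all tangent $X, Y$; combined with minimality and the fact that $x$ being radial exhausts all directions as $x_0$ varies radially, this yields that $M$ is invariant under dilations about the origin, hence a cone, and being smooth and minimal it must be an affine $m$-plane through $o$. I would cite the classical argument (as in \cite{CM}) for this final cone-to-plane rigidity rather than reprove it, since the substantive new step is identifying that the flat specialization of our framework reproduces the sharp constant $\widehat{\lambda} = m$ and that equality propagates the vanishing of the normal component of the position vector.
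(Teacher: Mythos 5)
Your proposal is correct and takes essentially the same route as the paper: specialize Theorem 4.2 to flat $R^n$ (the paper picks case (ii) with $B=0$, you pick case (iii) with $a=b=0$; both give $\widehat{\lambda}=m$), and for the equality case trace back through the proof of Theorem 3.1 to conclude $|\nabla_M \rho| = 1$, i.e.\ the position vector is everywhere tangent to $M$, so $M$ is a cone through the smooth point $o$ and hence an affine $m$-plane. The only cosmetic difference is at the very end: instead of citing the cone-to-plane rigidity from \cite{CM}, the paper concludes directly by noting that $e = x/|x|$ is a unit tangent field with $D_e e = 0$, so its integral curves are straight lines through $o$.
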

\begin{proof}
By choosing $B=0$ in Theorem 4.2 (ii), the above inequality can be obtained directly.

For the second conclusion,
checking the procedure of the proof of Theorem 3.1, the above equality holds if and only if
$$|\nabla r(x)|=|Dr(x)|=k_2=1 \qquad for\  any \ x\in M,$$
that is
\begin{equation*}
<x,\eta>=0
\end{equation*}
for any $\eta \in T^\bot_xM$. For $x\neq 0$, we defined
\begin{equation*}
e_x=\frac{x}{|x|}\in T_xM.
\end{equation*}
Then we have a unit vector field $e$ on $M\setminus \{o\}$. Obviously the integral curves of $e$ are straight lines in $R^n$
passing through $o$, because $D_ee=0$. We conclude that $M$ is an $m$-plane passing through $o$.
\end{proof}

\section{Application to Bernstein type problem}
In this section, we will investigate submanifolds in a Euclidean space  with parallel mean curvature.
It is known that the Gauss maps of such submanifolds are harmonic. It is natural to apply the method developed in previous
 sections to establish Liouville theorems of the Gauss map, which are equivalent to Bernstein type results of the submanifolds.
 Throughout this section, we shall  make use of the following notations:\\
 \hspace*{0.56cm} $\rho$ = the extrinsic distance relative to a point $o\in M$,\\
 D(R) = the intersection of $M$ with the Euclidean $R$-ball centered at $o\in M$.

\begin{center}
\large{\textbf{5.1. Submanifolds in Euclidean spaces}}
\end{center}

\begin{theorem}
Let $M^m(m\geq 3)$ be a complete Riemannian manifold and let $x: M\rightarrow R^N$ be a proper immersion  with parallel mean curvature.
Assume that $\delta=\underset{x \in M}{\sup}\rho(x)||A||_x$ is finite, where
$A$ is the second fundamental form of $M$.

 (i) If the constant $\mu_1=m-2-m\delta>0$ and
 \begin{equation}
  \int_{D(\rho)}{|A|^2} dv=o(\rho^{\mu_1}) \ \ as\ \rho\rightarrow \infty,
  \end{equation}
  then $M$ is an affine $m$-plane in $R^N$.

  (ii) If $M$ is minimal, the constant $\mu_2=m-2-2\delta>0$ and
   \begin{equation}
  \int_{D(\rho)}{|A|^2} dv=o(\rho^{\mu_2}) \ \ as\ \rho\rightarrow \infty,
  \end{equation}
  then $M$ is an affine $m$-plane in $R^N$.
\end{theorem}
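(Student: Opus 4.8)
The plan is to apply the vanishing machinery of Section 3 to the Gauss map of $M$. The key structural facts are: (a) for a submanifold with parallel mean curvature in $R^N$, the Gauss map $\gamma: M \to G(m,N)$ into the Grassmannian is harmonic (by Ruh-Vilms), so its differential $d\gamma$ is a $1$-form with values in $\gamma^{-1}TG$ satisfying the conservation law via Baird-Eells; (b) the energy density of the Gauss map is controlled by the squared norm of the second fundamental form, namely $|d\gamma|^2 = |A|^2$ (up to a universal constant depending on normalization). Thus the energy growth hypothesis $\int_{D(\rho)}|A|^2\,dv = o(\rho^{\mu})$ is exactly a growth condition on $E(d\gamma)$. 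If I can verify that the extrinsic distance function $\rho$ (or rather $\rho^2$) is a real $1$-exhaustion function on $M$ with growth order $\lambda = \mu_1$ (resp. $\mu_2$), then Corollary 3.1 forces $d\gamma \equiv 0$, whence $\gamma$ is constant and $M$ lies in an affine $m$-plane.

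First I would establish that $\Phi = \rho$ is a real $1$-exhaustion function under the stated decay condition $\delta = \sup_M \rho\|A\| < \infty$. This is the crux promised in Example 2.3 and Remark 4.1. The composition formula (4.5) gives, for $\Psi = \rho^2 = \|x\|^2|_M$,
\begin{equation*}
Hess(\Psi)(X,Y) = (\nabla d\|x\|^2)(X,Y) + \langle A(X,Y), \nabla^{R^N}\|x\|^2\rangle,
\end{equation*}
and since $\|x\|^2$ in $R^N$ has Hessian $2\,\mathrm{Id}$ with $\nabla\|x\|^2 = 2x$, this becomes $Hess(\Psi)(X,Y) = 2\langle X,Y\rangle + 2\langle A(X,Y), x\rangle$. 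The ambient position vector satisfies $|x| = \rho$ along $M$, so $|\langle A(X,Y),x\rangle| \leq \rho\|A\| \leq \delta$ in an appropriate normalization. I would use this to bound the eigenvalues $\lambda_i$ of $Hess(\Psi)$ from below and bound $\lambda_m$ from above: each eigenvalue satisfies $\lambda_i \geq 2 - 2\delta$ and $\lambda_m \leq 2 + 2m\delta$ (the factor reflecting that the mean-curvature term contributes a trace over $m$ directions when the mean curvature is parallel but nonzero; in the minimal case this improves). Then
\begin{equation*}
\sum_{i=1}^m \lambda_i - 2\lambda_m \geq (2 - 2\delta)\cdot m - 2(2 + \text{correction}),
\end{equation*}
and after computing $k_1$ and noting $k_2 = \sup|\nabla\rho|^2 = 1$, the hypothesis $\mu_1 = m - 2 - m\delta > 0$ (resp. $\mu_2 = m - 2 - 2\delta > 0$ when minimal) guarantees $k_1 > 0$, so $\Phi$ is indeed a real $1$-exhaustion function with $\lambda = k_1/2k_2 = \mu_1$ (resp. $\mu_2$).

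With the exhaustion function in hand, the remainder is a direct citation: the harmonicity of the Gauss map (hence $\mathrm{div}\,S_{d\gamma} = 0$), the energy identification $|d\gamma|^2 = |A|^2$, and the growth assumption together verify all hypotheses of Corollary 3.1 with $p = 1$. Therefore $d\gamma = 0$, so the Gauss map is constant and $M$ is a totally geodesic (hence affine) $m$-plane in $R^N$. The two cases (i) and (ii) differ only in the precise eigenvalue bound for $\lambda_m$, which is sharper when $M$ is minimal because the mean curvature vector vanishes, replacing the factor $m\delta$ by $2\delta$.

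\textbf{The main obstacle} I anticipate is the careful verification that $\rho^2$ is a genuine $1$-exhaustion function with the claimed exact growth order — specifically, controlling the largest eigenvalue $\lambda_m$ of $Hess(\rho^2)$ and tracking how the parallel (possibly nonzero) mean curvature enters the trace term $\langle A(X,Y),x\rangle$. The passage from the pointwise Hessian bound to the global constant $k_1 = \inf_M\{\sum_i\lambda_i - 2\lambda_m\}$ requires that the decay $\delta = \sup_M\rho\|A\|$ be uniform, and one must check that properness of the immersion ensures $\rho$ is a bona fide exhaustion function with relatively compact sublevel sets. The distinction between the hypersurface-type bound $m\delta$ and the minimal bound $2\delta$ is exactly the technical point that separates parts (i) and (ii), and getting these constants right is where the real work lies.
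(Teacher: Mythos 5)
Your architecture is exactly the paper's (harmonic Gauss map via Ruh--Vilms, conservation law via Baird--Eells, $|d\gamma|^2=\|A\|^2$, extrinsic distance as a real $1$-exhaustion function, then the vanishing corollary), but there is a genuine gap in the one step you yourself flag as the crux, and it is not merely unfinished bookkeeping: as written, your estimates do not produce the exponents $\mu_1,\mu_2$ at all. First, your bound $\lambda_m\leq 2+2m\delta$ is wrong, and the reasoning behind it (``the mean-curvature term contributes a trace over $m$ directions'') is confused: the identity $Hess(\rho^2)(X,X)=2\langle X,X\rangle+2\langle A(X,X),x\rangle$ is pointwise in a single unit vector $X$, no trace enters, so $|\langle A(X,X),x\rangle|\leq\rho\|A\|\leq\delta$ gives $2-2\delta\leq\lambda_i\leq 2+2\delta$ for \emph{every} $i$, whether or not $H=0$. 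Second, even with this corrected bound, your displayed inequality (bounding all $m$ eigenvalues below and then subtracting $2\lambda_m$) only yields $k_1\geq m(2-2\delta)-2(2+2\delta)=2m-4-(2m+4)\delta$, i.e.\ a monotonicity exponent $m-2-(m+2)\delta$, which is \emph{strictly smaller} than $\mu_1$. This is fatal, not cosmetic: the hypothesis $\int_{D(\rho)}|A|^2\,dv=o(\rho^{\mu_1})$ is \emph{weaker} than $o(\rho^{m-2-(m+2)\delta})$, so the vanishing corollary cannot be applied. The correct bookkeeping absorbs one copy of $\lambda_m$ into the sum:
\begin{equation*}
\sum_{i=1}^{m}\lambda_i-2\lambda_m=\sum_{i=1}^{m-1}\lambda_i-\lambda_m\geq (m-1)(2-2\delta)-(2+2\delta)=2m-4-2m\delta,
\end{equation*}
which, together with $k_2=\sup_M|\nabla\rho|^2\leq 1$, gives exactly $\lambda\geq m-2-m\delta=\mu_1$.

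You also misattribute the mechanism separating (i) from (ii). Minimality does not sharpen the upper bound on $\lambda_m$ (it remains $2+2\delta$); what it does is pin the trace down exactly: $\sum_{i=1}^m\lambda_i=2m+2\langle \mathrm{tr}\,A,\,x\rangle=2m$ when $H=0$, so $k_1\geq 2m-2(2+2\delta)=2m-4-4\delta$ and hence $\lambda\geq m-2-2\delta=\mu_2$. In other words, case (i) loses $2\delta$ on each of $m$ eigenvalues, while case (ii) loses $4\delta$ only through $\lambda_m$ because the sum is rigid. With these two corrections (and citing the vanishing statement that actually applies here, namely Corollary 3.3 rather than Corollary 3.1), the remainder of your argument — properness giving the exhaustion property, harmonicity of $\gamma$, $divS_\gamma=0$, $|d\gamma|^2=\|A\|^2$, vanishing, hence $\gamma$ constant and $M$ an affine $m$-plane — goes through and coincides with the paper's proof.
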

\begin{proof}
 As in the proof of Theorem 4.1, we take $\Phi=\rho$ as an exhaustion function for $M$. We need to prove that $\Phi$ is a real
 $1$-exhaustion function under the assumptions. Let $\lambda_1(x) \leq \lambda_2(x) \leq \cdots \leq \lambda_m(x) $
 be the eigenvalues of $Hess(\rho^2)$. From (4.5), we get
 \begin{equation}
 Hess(\rho^2)(X,X)=2<X,X>+2<A(X,X),x>.
 \end{equation}
 It follows that
 \begin{equation*}
 2-2\delta  \leq \lambda_i \leq 2+2\delta,\qquad i=1,\cdots,m.
 \end{equation*}
 Hence
 \begin{eqnarray}
\nonumber k_1&=&\underset{x\in M}{\inf}\{\underset{i=1}{\overset{m}{\sum}}\lambda_i(x)-2\lambda_m(x)\}\\
              &\geq& 2m-4-2m\delta.
 \end{eqnarray}
 By definition of $\lambda$ and $|\nabla\rho|\leq 1$, we have from (2.11) and (5.4) that
 \begin{equation*}
 \lambda \geq \frac{k_1}{2}= m-2-m\delta.
 \end{equation*}
 Since $M$ has parallel mean curvature, its Gauss map $\gamma: M \rightarrow G_m(R^N)$ is harmonic. Thus $d\gamma$
 satisfies the conservation law, i.e., $divS_\gamma=0$. From \cite{EL}, we know that $|d\gamma|^2=\|A\|^2$.
 By applying Corollary 3.3 to $d\gamma$ under the assumption (5.1), we deduce $d\gamma=0$, that is
  $\gamma$ is constant. Hence $M$ is an $m$-plane.

  Now suppose that $M$ is minimal. By (5.3) and the minimality of $M$, we deduce
  \begin{eqnarray*}
    k_1&=&\underset{x\in M}{\inf}\{\underset{i=1}{\overset{m}{\sum}}\lambda_i(x)-2\lambda_m(x)\}\\
              &\geq& 2m-4-4\delta.
 \end{eqnarray*}
 So $\lambda\geq m-2-2\delta$. Therefore a similar argument yields the result of (ii).
\end{proof}

\begin{remark}
(1) Actually we may get monotonicity formulae for the integral of $\|A\|^2$ in Theorem 5.1.
(2) In the case that $M$ is minimal, Chen \cite{Ch} proved that if $\delta< 1$ (without any energy growth condition),
then $M$ is an $m$-plane in $R^N$. (3) Kasue \cite{Ka} proved that if $\|A\|^2= o(\frac{1}{\rho^{2+\epsilon}})$ (as $\rho\rightarrow \infty$)
and $M^m(m\geq 3)$ has one end, then $M$ is an $m$-plane.
\end{remark}

\begin{corollary}
Let $M^m (m\geq 3)$ be a complete Riemannian manifold and let $x:M^m \rightarrow R^N$ be a proper immersion with parallel
 mean curvature (resp. $H=0$). Let $\delta$, $A$, $\rho$ and $\mu_i$ (i=1,2) be as in Theorem 5.1. If $\mu_1$ (resp. $\mu_2>0$)
 and$$\int_{M}{\|A\|^2} dv<+\infty,$$
 then $M$ is an affine $m$-plane in $R^N$.
\end{corollary}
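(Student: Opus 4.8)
The plan is to reduce Corollary 5.1 directly to Theorem 5.1 by observing that the finite total curvature assumption is strictly stronger than the growth hypotheses (5.1) and (5.2) appearing there. The essential elementary point is that a uniformly bounded quantity is automatically $o(\rho^{\mu})$ as $\rho \to \infty$ whenever $\mu > 0$.

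First I would note that, since $\|A\|^2 \geq 0$ and $D(\rho) \subset M$, the partial integrals are nondecreasing in $\rho$ and uniformly bounded by the total integral:
$$\int_{D(\rho)} \|A\|^2 \, dv \leq \int_{M} \|A\|^2 \, dv =: C < +\infty$$
for every $\rho > 0$. In particular $\int_{D(\rho)} \|A\|^2\, dv = O(1)$, and indeed converges to $C$, as $\rho \to \infty$. Next, because the hypothesis guarantees $\mu_1 > 0$ in the parallel mean curvature case (resp. $\mu_2 > 0$ in the minimal case), we have $\rho^{\mu_1} \to \infty$ (resp. $\rho^{\mu_2} \to \infty$), whence
$$\frac{1}{\rho^{\mu_1}}\int_{D(\rho)} \|A\|^2 \, dv \;\leq\; \frac{C}{\rho^{\mu_1}} \;\longrightarrow\; 0 \qquad (\rho \to \infty).$$
This is exactly the assertion that $\int_{D(\rho)} \|A\|^2\, dv = o(\rho^{\mu_1})$, and the identical estimate with $\mu_2$ holds when $M$ is minimal. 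Thus the growth condition (5.1) (resp. (5.2)) of Theorem 5.1 is verified.

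Finally I would invoke Theorem 5.1 verbatim: its part (i) in the parallel mean curvature case and its part (ii) in the minimal case each conclude that $M$ is an affine $m$-plane in $R^N$. Since $\delta$, $A$, $\rho$ and the $\mu_i$ are taken to be the same as in Theorem 5.1, and the immersion, dimension bound $m \geq 3$, and properness hypotheses coincide, no additional verification is needed.

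There is essentially no obstacle in this argument, as the corollary is a clean specialization of Theorem 5.1; the only point requiring care is the elementary implication that a finite limit forces the $o(\rho^{\mu})$ decay, which fails in the absence of the positivity of $\mu_i$ but is secured by the stated hypotheses $\mu_1 > 0$ (resp. $\mu_2 > 0$). It is worth emphasizing that the strength of the result lies entirely in Theorem 5.1; the present corollary merely records the convenient special case in which the energy growth condition is replaced by the more transparent finite-total-curvature assumption.
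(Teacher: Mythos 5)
Your proposal is correct and is exactly the intended argument: the paper treats this corollary as an immediate consequence of Theorem 5.1, since finiteness of $\int_M \|A\|^2\,dv$ bounds the partial integrals $\int_{D(\rho)}\|A\|^2\,dv$ uniformly, which is $o(\rho^{\mu_i})$ precisely because $\mu_i>0$. Your care in flagging that positivity of $\mu_i$ is what makes the $o(\rho^{\mu_i})$ deduction valid matches the role of that hypothesis in the paper.
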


\begin{corollary}
Let $x: M^m \rightarrow R^{m+1} (m\geq 3)$ be a
  properly  immersed complete hypersurface.
Set $\tilde{\delta}=\underset{x \in M}{\sup}\{\rho(x)\underset{i}\max |\eta_i(x)|\}$, where  $\eta_i(x)$$(i=1,\cdots,m)$ are
the principal curvatures of $M$ at $x$. Suppose $M$ satisfies one of the following two conditions:

(i) $M$ has parallel mean curvature, the constant $\tilde{\mu}_1= m-2-m\tilde{\delta}>0$ and
$$\int_{D(\rho)}{\|A\|^2} dv=o(\rho^{\tilde{\mu}_1}) \qquad as \ \rho\rightarrow \infty,$$

(ii) $M$ is minimal, the constant $\tilde{\mu}_2= m-2-2\tilde{\delta}>0$ and
$$\int_{D(\rho)}{\|A\|^2} dv=o(\rho^{\tilde{\mu}_2}) \qquad as \ \rho\rightarrow \infty,$$
Then $M$ is a hyperplane.
\end{corollary}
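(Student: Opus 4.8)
The plan is to run the argument of Theorem 5.1 essentially verbatim, but with the sharper eigenvalue control that the hypersurface hypothesis provides. First I would take $\Phi = \rho$, the extrinsic distance to $o \in M$, as an exhaustion function on $M$, and verify that it is a real $1$-exhaustion function of growth order at least $\tilde{\mu}_1$ (resp. $\tilde{\mu}_2$). The key new point, compared with Theorem 5.1, is that for a hypersurface $x : M^m \to R^{m+1}$ the shape operator is self-adjoint with eigenvalues $\eta_1(x), \dots, \eta_m(x)$, so for every unit tangent vector $X$ one has the pointwise bound $|A(X,X)| \leq \max_i |\eta_i(x)|$. Since $\max_i|\eta_i|$ is the operator norm of $A$, it is no larger than the Hilbert--Schmidt norm $\|A\|_x$ used in Theorem 5.1; this is exactly why $\tilde{\delta} \leq \delta$ and why the hypotheses here are genuinely weaker.

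Concretely, I would insert this bound into the composition formula (5.3), $Hess(\rho^2)(X,X) = 2\langle X, X\rangle + 2\langle A(X,X), x\rangle$: for a unit vector $X$, $|\langle A(X,X), x\rangle| \leq |A(X,X)|\,\rho(x) \leq \rho(x)\max_i|\eta_i(x)| \leq \tilde{\delta}$, so the eigenvalues $\lambda_1 \leq \cdots \leq \lambda_m$ of $Hess(\rho^2)$ satisfy $2 - 2\tilde{\delta} \leq \lambda_i \leq 2 + 2\tilde{\delta}$. Writing $\sum_{i=1}^m \lambda_i - 2\lambda_m = \sum_{i=1}^{m-1}\lambda_i - \lambda_m$ and applying the pinching gives $k_1 = \inf_{x\in M}\{\sum_i \lambda_i - 2\lambda_m\} \geq 2m - 4 - 2m\tilde{\delta}$; together with $|\nabla\rho| \leq 1$, i.e. $k_2 \leq 1$, this yields $\lambda = k_1/2k_2 \geq m - 2 - m\tilde{\delta} = \tilde{\mu}_1 > 0$, so $\rho$ is a real $1$-exhaustion function of the claimed growth order. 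In case (ii) the mean curvature vanishes, so tracing (5.3) over an orthonormal frame gives $\sum_i \lambda_i = 2m$ exactly; then $k_1 = \inf_{x\in M}\{2m - 2\lambda_m\} \geq 2m - 4 - 4\tilde{\delta}$, and $\lambda \geq m - 2 - 2\tilde{\delta} = \tilde{\mu}_2 > 0$.

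With $\rho$ identified as a real $1$-exhaustion function of growth order $\tilde{\mu}_1$ (resp. $\tilde{\mu}_2$), I would finish exactly as in Theorem 5.1. The parallel-mean-curvature (resp. minimality) hypothesis makes the generalized Gauss map $\gamma : M \to G_m(R^{m+1})$ harmonic, so $d\gamma$ satisfies the conservation law $\mathrm{div}\,S_{\gamma} = 0$, and $|d\gamma|^2 = \|A\|^2$ by \cite{EL}. The growth hypothesis $\int_{D(\rho)}\|A\|^2\, dv = o(\rho^{\tilde{\mu}_1})$ (resp. $o(\rho^{\tilde{\mu}_2})$) is precisely the energy-growth condition of the vanishing result Corollary 3.3 applied to the $1$-form $d\gamma$, whence $d\gamma \equiv 0$, the Gauss map is constant, and $M$ is a hyperplane. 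I do not expect a genuine obstacle: the only substantive new ingredient beyond Theorem 5.1 is the sharp bound $|A(X,X)| \leq \max_i|\eta_i|$ available for hypersurfaces. The one thing to check carefully is that the norm of $A$ entering the energy $\int\|A\|^2$ (the Hilbert--Schmidt norm, matching $|d\gamma|^2$) is kept distinct from the operator norm $\max_i|\eta_i|$ entering $\tilde{\delta}$, so that no inconsistency arises in the two appearances of $A$.
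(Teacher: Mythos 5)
Your proof is correct and takes essentially the same approach as the paper: where you bound the quadratic form via the operator-norm estimate $|A(X,X)|\leq \max_i|\eta_i(x)|$ for unit $X$, the paper diagonalizes $Hess(\rho^2)$ directly in the principal-direction basis to get $\lambda_i(x)=2+2\eta_i(x)\langle \vec{n},x\rangle$, yielding the identical pinching $2-2\tilde{\delta}\leq\lambda_i\leq 2+2\tilde{\delta}$, the same bounds $k_1\geq 2m-4-2m\tilde{\delta}$ (resp. $k_1\geq 2m-4-4\tilde{\delta}$ using minimality), and the same conclusion through Theorem 5.1's Gauss-map argument and Corollary 3.3. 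Your care in distinguishing the operator norm in $\tilde{\delta}$ from the Hilbert--Schmidt norm in the energy integral is consistent with the paper's treatment.
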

\begin{proof}
 Let $\vec{n}$ be the unit normal vector
field of $M$ in $R^{m+1}$. From (5.3), we get
$$\lambda_i(x)=2+2\eta_i(x)<\vec{n},x>.$$
Therefore $$2-2\tilde{\delta}\leq \lambda_i(x)\leq 2+2\tilde{\delta}.$$
So $$k_1=\underset{x\in M}{\inf}\{\underset{i=1}{\overset{m}{\sum}}\lambda_i(x)-2\lambda_m(x)\}\geq 2m-4-2m\tilde{\delta}.$$
In particular, if $M$ is minimal, then $$k_1\geq2m-4-4\tilde{\delta}.$$
The remaining arguments are similar to those in the proof of Theorem 5.1.
\end{proof}

\begin{theorem}
Let $x:M^m \rightarrow R^N $ be a  properly  immersed $m$-dimensional $(m\geq 3)$ complete submanifold.
 Set $\delta(R_0)=\underset{M\setminus B(R_0)}{\sup}\{\rho(x)\|A\|(x)\}$
for some regular value $R_0>0$ of $\rho$ and $\nu=\frac{\nabla\rho}{|\nabla\rho|}$.

(i) Suppose $M$ has parallel mean curvature. Suppose $\|i_{\nu}A\|^2\leq \frac{\|A\|^2}{2}$ on $\partial D(R_0)$ and
$\|A\|(x)\leq \frac{m-2-\epsilon_0}{m\rho(x)}$ for some $\epsilon_0 >0$ and any $\rho(x)\geq R_0$. If
\begin{equation}
\int_{D(\rho)}{|A|^2} dv=o(\rho^{\mu_1}) \ \ as\ \rho\rightarrow \infty,
  \end{equation}
  where $\mu_1 = m-2-m\delta(R_0)$, then $M$ is an $m$-plane in $R^N$.

 (ii) Suppose $M$ is a minimal submanifold. Suppose $\|i_{\nu}A\|^2\leq \frac{\|A\|^2}{2}$ on
 $\partial D(R_0)$ and
$\|A\|(x)\leq \frac{m-2-\epsilon_0}{2\rho(x)}$ for some $\epsilon_0 >0$ and any $\rho(x)\geq R_0$. If
\begin{equation}
\int_{D(\rho)}{|A|^2} dv=o(\rho^{\mu_2}) \ \ as\ \rho\rightarrow \infty,
  \end{equation}
  where $\mu_2 = m-2-2\delta(R_0)$, then $M$ is an $m$-plane in $R^N$.
\end {theorem}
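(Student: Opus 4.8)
The plan is to repeat the scheme of Theorem 5.1, but now with the ``outside a compact set'' tools, namely Theorem 3.2 and its vanishing consequence Corollary 3.4, applied to $\omega = d\gamma$, the differential of the Gauss map. First I would take $\Phi = \rho$, the extrinsic distance function, as an exhaustion function on $M$ (properness of $x$ guarantees the sublevel sets $B_\Phi(t) = D(t)$ are relatively compact), and note $|\nabla\rho| \le |D\rho| = 1$, so $k_2(R_0) \le 1$. The crux is to check that $\Phi$ is a real $1$-exhaustion function on $M\setminus B_\Phi(R_0)$. Starting from the composition formula (5.3), $Hess(\rho^2)(X,X) = 2|X|^2 + 2\langle A(X,X), x\rangle$, the decay bound $\rho\|A\| \le \delta(R_0)$ on $M\setminus B_\Phi(R_0)$ gives the pinching $2 - 2\delta(R_0) \le \lambda_i \le 2 + 2\delta(R_0)$ for the eigenvalues of $Hess(\rho^2)$. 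Writing $\sum_{i=1}^m\lambda_i - 2\lambda_m = (\lambda_1 + \cdots + \lambda_{m-1}) - \lambda_m$ and estimating the two groups separately yields $k_1(R_0) \ge 2m - 4 - 2m\delta(R_0)$, hence $\lambda(R_0) = k_1(R_0)/2k_2(R_0) \ge m - 2 - m\delta(R_0) = \mu_1$. The hypothesis $\|A\|(x) \le (m-2-\epsilon_0)/(m\rho(x))$ forces $\delta(R_0) \le (m-2-\epsilon_0)/m$, so $\mu_1 \ge \epsilon_0 > 0$; in particular $k_1(R_0) > 0$ and $\Phi$ is a real $1$-exhaustion function outside $B_\Phi(R_0)$.

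Next I would set up the form. Parallel mean curvature makes the Gauss map $\gamma$ harmonic, so $\omega = d\gamma$ satisfies the conservation law, and by \cite{EL} one has $|d\gamma|^2 = \|A\|^2$ together with $|i_\nu d\gamma|^2 = \|i_\nu A\|^2$. Thus the boundary hypothesis $\|i_\nu A\|^2 \le \tfrac12\|A\|^2$ on $\partial D(R_0)$ is exactly the condition $\tfrac12|\omega|^2 \ge |i_\nu\omega|^2$ required by Theorem 3.2. All hypotheses of Theorem 3.2 therefore hold, and the monotonicity formula (3.5) is valid with exponent $\lambda(R_0)$.

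Then I would invoke Corollary 3.4. Since $\mu_1 \le \lambda(R_0)$, the growth assumption (5.5), $\int_{D(\rho)}\|A\|^2\,dv = o(\rho^{\mu_1})$, implies $\int_{D(\rho)\setminus D(R_0)}|\omega|^2\,dv = o(\rho^{\lambda(R_0)})$, so Corollary 3.4 gives $d\gamma \equiv 0$, i.e. $\|A\| \equiv 0$, on $M\setminus B_\Phi(R_0)$. Since $\gamma$ is harmonic it is real-analytic and enjoys the unique continuation property (Remark 3.3), so $d\gamma \equiv 0$ on all of $M$; a complete totally geodesic submanifold of $R^N$ is an affine $m$-plane. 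For part (ii) the sole modification is that minimality makes $\sum_i A(e_i,e_i)$ vanish, so $\sum_{i=1}^m\lambda_i = 2m$ exactly and $k_1(R_0) = 2m - 2\sup\lambda_m \ge 2m - 4 - 4\delta(R_0)$, giving $\lambda(R_0) \ge m - 2 - 2\delta(R_0) = \mu_2 \ge \epsilon_0 > 0$; the remaining argument is verbatim with (5.6).

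The step I expect to be the main obstacle is the final analytic passage from $d\gamma \equiv 0$ on $M\setminus B_\Phi(R_0)$ to $d\gamma \equiv 0$ on all of $M$: this requires the unique continuation property for the harmonic Gauss map (equivalently, for the elliptic system it satisfies), which is the only ingredient beyond the monotonicity bookkeeping. The remaining points — the eigenvalue pinching, the identity $|i_\nu d\gamma| = \|i_\nu A\|$, and the inclusion $o(\rho^{\mu_i}) \subseteq o(\rho^{\lambda(R_0)})$ that feeds the stated growth hypothesis into the vanishing theorem — are routine.
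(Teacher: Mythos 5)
Your proposal is correct and follows essentially the same route as the paper's proof: take $\Phi=\rho$, use the pinching $2-2\delta(R_0)\le\lambda_i\le 2+2\delta(R_0)$ from the composition formula (5.3) to show $\rho$ is a real $1$-exhaustion function on $M\setminus B(R_0)$ with $\lambda(R_0)\ge\mu_1$ (resp. $\mu_2$ via minimality), then apply Corollary 3.4 to $\omega=d\gamma$ for the harmonic Gauss map and finish with unique continuation of harmonic maps. The only difference is that you spell out the eigenvalue bookkeeping that the paper delegates to the proof of Theorem 5.1, and you correctly identify unique continuation as the one nontrivial external ingredient, exactly as the paper does.
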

\begin{proof}
First, we assume that $M$ has parallel mean curvature.  Set
\begin{eqnarray*}
k_1(R_0)&=& \underset{M\setminus B(R_0)}{\inf}\{\underset{i=1}{\overset{m}{\sum}}\lambda_i(x)-2 \lambda_m(x)\},\\
k_2(R_0)&=& \underset{M\setminus B(R_0)}{\sup}|\nabla\rho|^2,\\
\lambda(R_0)&=&\frac{k_1(R_0)}{2k_2(R_0)},
\end{eqnarray*}
where $\lambda_1(x) \leq \lambda_2(x) \leq \cdots \leq \lambda_m(x) $
 are the eigenvalues of $Hess(\rho^2)$. From the proof of Theorem 5.1, it is easy to see that $$\lambda(R_0)\geq m-2-m\delta(R_0).$$
 If $\|A\|\leq \frac{m-2-\epsilon_0}{m\rho(x)}$ for some $\epsilon_0 >0$ and any $\rho(x)\geq R_0$, then
 $$m-2-m\delta(R_0)\geq \epsilon_0 >0.$$
 Obviously the assumption $\|i_{\nu}A\|^2\leq \frac{\|A\|^2}{2}$ is equivalent to $\|d\gamma(\nu)\|^2\leq \frac{\|d\gamma\|^2}{2}$,
 where $\gamma: M\rightarrow G(m,N)$ is the Gauss map. We have already known that $\gamma$ is harmonic. We deduce from Corollary 3.4
 that $\gamma$ is constant on $M\setminus B(R_0)$ and thus $\gamma$ is constant on $M$ by the unique continuation property of harmonic maps.
 Hence $M$ is an $m$-plane.
\end{proof}

\begin{corollary}
Let $x:M^m \rightarrow R^N $ be an   $m$-dimensional $(m\geq 3)$ complete submanifold  with parallel mean curvature.
Suppose $|i_{v}A|^2\leq \frac{|A|^2}{2}$, where $v=\frac{\nabla\rho}{|\nabla\rho|}$. If
\begin{equation}
\|A\|(x) =o(\frac{1}{\rho(x)}) \qquad as \ \rho(x)\rightarrow +\infty,
\end{equation}
 and
$$\int_M{|A|^2}dv<+\infty,$$
  then $M$ is an affine m-plane in $ R^N $.
\end{corollary}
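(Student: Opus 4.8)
The plan is to deduce this corollary directly from part (i) of Theorem 5.2, after arranging that theorem's hypotheses by a suitable choice of the threshold $R_0$. The decisive observation is that the pointwise decay $\|A\|(x)=o(1/\rho(x))$ is precisely the statement $\rho(x)\|A\|(x)\to 0$ as $\rho(x)\to\infty$, so the quantity $\delta(R_0)=\sup_{M\setminus B(R_0)}\{\rho(x)\|A\|(x)\}$ tends to $0$ as $R_0\to\infty$. Since $m\geq 3$ forces $m-2\geq 1>0$, I would fix some $\epsilon_0>0$ with $\epsilon_0<m-2$ (for instance $\epsilon_0=(m-2)/2$) and then, using Sard's theorem, pick a regular value $R_0$ of $\rho$ large enough that $\rho(x)\|A\|(x)\leq (m-2-\epsilon_0)/m$ for all $x$ with $\rho(x)\geq R_0$. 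This single choice yields simultaneously the pointwise bound $\|A\|(x)\leq \frac{m-2-\epsilon_0}{m\rho(x)}$ required by Theorem 5.2(i) and the estimate $\delta(R_0)\leq (m-2-\epsilon_0)/m$, whence $\mu_1=m-2-m\delta(R_0)\geq \epsilon_0>0$.

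Next I would verify the two remaining hypotheses of Theorem 5.2(i). The boundary condition $\|i_\nu A\|^2\leq \|A\|^2/2$ on $\partial D(R_0)$ is automatic: the fields $v$ and $\nu$ both equal the unit radial field $\nabla\rho/|\nabla\rho|$, and the corollary assumes $|i_v A|^2\leq |A|^2/2$ everywhere on $M$, hence in particular on $\partial D(R_0)$ for any $R_0$. For the growth condition (5.5), I would invoke finiteness of the total energy: since $\int_M\|A\|^2\,dv<+\infty$, the truncated integrals $\int_{D(\rho)}\|A\|^2\,dv$ remain bounded as $\rho\to\infty$, while $\rho^{\mu_1}\to\infty$ because $\mu_1>0$; therefore $\int_{D(\rho)}\|A\|^2\,dv=o(\rho^{\mu_1})$ trivially. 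Recall also that the parallel mean curvature assumption makes the Gauss map $\gamma$ harmonic, so $d\gamma$ satisfies the conservation law, exactly as exploited in the proof of Theorem 5.2.

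With every hypothesis of Theorem 5.2(i) now in force, that theorem gives at once that $M$ is an affine $m$-plane in $R^N$, completing the argument. I do not expect any genuine obstacle: the corollary is essentially a qualitative repackaging of Theorem 5.2, and the only point demanding care is the bookkeeping that keeps $\mu_1$ strictly positive. This is exactly where the faster-than-$1/\rho$ decay of $\|A\|$ does its work, driving $\delta(R_0)$ below any prescribed threshold no matter how close $m-2$ may be to zero; by contrast, a mere $O(1/\rho)$ bound would only control $\delta(R_0)$ by a fixed constant and could fail to yield $\mu_1>0$. The sole technical nicety is the simultaneous demand that $R_0$ be both large and a regular value of $\rho$, which Sard's theorem guarantees.
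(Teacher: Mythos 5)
Your reduction to Theorem 5.2(i) handles the quantitative bookkeeping correctly: the decay $\|A\|=o(1/\rho)$ does drive $\delta(R_0)$ below $(m-2-\epsilon_0)/m$ for all large $R_0$, the boundary condition on $\partial D(R_0)$ is inherited from the global hypothesis $|i_vA|^2\le \frac{|A|^2}{2}$, and finite energy trivially yields the growth condition (5.5) once $\mu_1>0$. But there is a genuine gap: Theorem 5.2 is stated for a \emph{properly} immersed complete submanifold, whereas the corollary does not assume properness --- it only assumes completeness of $M$. You never verify this hypothesis, and it is not free. Without properness, the extrinsic distance function $\rho$ need not be an exhaustion function: the sets $D(R)=M\cap B(R)$ need not be relatively compact in $M$, so the divergence-theorem and coarea arguments underlying Theorems 3.1, 3.2 and 5.2 cannot be run at all, and Sard's theorem does not repair this (a regular value $R_0$ is of no use if $D(R_0)$ fails to have compact closure).

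This is precisely the point the paper's proof addresses first: by the decay condition (5.7) and Theorem 1.1 of Bessa--Jorge--Montenegro \cite{BJM}, the immersion $x:M^m\to R^N$ is forced to be proper, hence $\rho$ is an exhaustion function; only then do ``the remaining arguments'' proceed exactly as you describe, following Theorem 5.2. So your argument becomes correct verbatim once this properness step is inserted at the start, but as written it silently assumes the one hypothesis that the corollary is designed to dispense with --- indeed, deriving properness from the decay of $\|A\|$ is the actual content that distinguishes the corollary from a routine specialization of Theorem 5.2.
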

\begin{proof}
By (5.7) and  Theorem 1.1 of \cite{BJM}, we know that $x: M^m \rightarrow R^N$ is proper. So the extrinsic distance function $\rho$
   is an exhaustion function. The remaining arguments are as above.
\end{proof}
\begin{remark}
Some submanifolds automatically satisfy the condition
\begin{equation}
|i_{v}A|^2\leq \frac{|A|^2}{2}
\end{equation}
in Theorem 5.2 or Corollary 5.3. Recall that the submanifolds $M$ with the second fundamental form $A$ is called
$isotropic$ at $p \in M$
if $A(e,e)$ is independent of the choice of the unit vector $e \in T_pM$. $M$ is said to be $isotropic$ if $M$ is isotropic at every point.
It is easy to verify that an isotropic  submanifold with dimension$ \geq 2$ satisfies (5.8). In section 5.2, we will investigate  minimal
real K$\ddot{a}$hler submanifolds in $R^N$, which automatically satisfy (5.8) too.
\end{remark}
\begin{definition}
(cf. \cite{An})
The integral $\int_M \|A\|^mdv$ for $m$-dimensional minimal submanifold $M$ in $R^N$ is called the total scalar curvature of $M$.
\end{definition}
\begin{lemma}Let $M^m\rightarrow R^{m+p} (m \geq 3)$ be a  minimally immersed complete submanifold. If $M$ has finite total scalar curvature,
then

(1) $\|A\|_x \leq \frac{C}{\rho^m}$ for  $x\in \partial D(\rho)$;

(2) $\int_M \|A\|^2 dv< \infty$.
\end{lemma}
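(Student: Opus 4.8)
The plan is to read ``finite total scalar curvature'' as $\int_M\|A\|^m\,dv<\infty$ (Definition 5.2) and to treat (1) as the analytic heart of the lemma, with (2) following by a routine integral estimate once (1) is in hand. The engine for (1) is an $\varepsilon$-regularity (small-energy mean value) estimate for $\|A\|$. First I would record the Simons-type inequality for a minimal immersion: after the Kato improvement it yields, weakly on $M$, $\|A\|\,\Delta\|A\|\ge -c_m\|A\|^4$, so that $f:=\|A\|$ is a subsolution $\Delta f\ge -c_m f^3$ away from its zero set (and this passes through the zeros in the weak sense). Combining this with the Michael--Simon Sobolev inequality on the minimal submanifold $M$ and a Moser iteration, I would establish the standard $\varepsilon$-regularity: there is $\varepsilon_0=\varepsilon_0(m)>0$ such that if $\int_{M\cap B_{2s}(x)}\|A\|^m<\varepsilon_0$, then
\[
\sup_{M\cap B_s(x)}\|A\|^2\ \le\ \frac{C}{s^2}\Big(\int_{M\cap B_{2s}(x)}\|A\|^m\,dv\Big)^{2/m}.
\]

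Next I would use this to get decay. Since $\int_M\|A\|^m<\infty$, the tail $\int_{M\setminus B(\sigma)}\|A\|^m\to 0$ as $\sigma\to\infty$; applying the $\varepsilon$-regularity on balls of radius $s\sim\rho(x)/2$ centred at far-away points $x$ shows first that $\|A\|(x)\to 0$, indeed $\|A\|(x)=o(\rho(x)^{-1})$. To upgrade this to the sharp rate $\|A\|\le C\rho^{-m}$ claimed in (1), I would invoke the structure at infinity of minimal submanifolds of finite total curvature (cf. \cite{An}): $M$ has finitely many ends, each regular at infinity, so that outside a compact core every end is the graph of a function $u$ over an affine $m$-plane whose leading term is harmonic, with $|u|=O(\rho^{2-m})$, $|\nabla u|=O(\rho^{1-m})$ and $|\nabla^2 u|=O(\rho^{-m})$ for $m\ge 3$. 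On such a graphical end $\|A\|$ is controlled by $|\nabla^2 u|$ up to factors that are bounded because $\nabla u\to 0$, whence $\|A\|(x)\le C\rho(x)^{-m}$ on each end; together with the boundedness of $\|A\|$ on the compact core, this proves (1).

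Finally, (2) is a direct consequence of (1) together with Euclidean volume growth. I would split $\int_M\|A\|^2\,dv=\int_{D(R_0)}\|A\|^2\,dv+\int_{M\setminus D(R_0)}\|A\|^2\,dv$; the first integral is finite by smoothness on the relatively compact set $D(R_0)$. For the second, part (1) gives $\|A\|^2\le C\rho^{-2m}$, while the monotonicity formula for volume (Corollary 4.1) shows that $Vol(M\cap B(t))/t^m$ is non-decreasing, and its finite limit at infinity (finitely many planar ends) yields $Vol(M\cap B(t))\le C t^m$. Writing $V(t)=Vol(M\cap B(t))$ and using the co-area formula followed by integration by parts,
\[
\int_{M\setminus D(R_0)}\rho^{-2m}\,dv=\int_{R_0}^{\infty}t^{-2m}\,dV(t)\ \le\ C\int_{R_0}^{\infty}t^{-m-1}\,dt<\infty ,
\]
which proves (2).

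The hard part is the sharp exponent in (1): the soft $\varepsilon$-regularity only delivers $\|A\|=o(\rho^{-1})$, and obtaining precisely $\rho^{-m}$ forces one to analyse the asymptotic geometry of the ends (regularity at infinity and the harmonic expansion of the graphing functions), which is where the real work --- and the dependence on $m\ge 3$ --- lies. By contrast, the Simons/Sobolev/Moser machinery and the final integral estimate for (2) are routine.
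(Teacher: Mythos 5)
Your proposal is correct, and in substance it is the same argument the paper relies on --- the difference being that the paper does not actually write a proof: its proof of (1) is literally the citation ``See \cite{An,Sc,Ty}'', and its proof of (2) is ``using (1), the argument of Shen--Zhu \cite{SZ1} goes through with hypersurfaces replaced by submanifolds.'' Your $\varepsilon$-regularity step (Simons inequality plus Kato, Michael--Simon Sobolev, Moser iteration) and the subsequent analysis of the ends are exactly the machinery inside those references, so your write-up of (1) is a faithful reconstruction rather than a genuinely different route; note that it is also no more self-contained than the paper's citation at the crucial point, since --- as you yourself flag --- the sharp exponent in $\|A\|\le C\rho^{-m}$ rests entirely on Anderson's structure theorem (properness, finitely many ends, each end a graph with $|u|=O(\rho^{2-m})$, $|\nabla u|=O(\rho^{1-m})$, $|\nabla^2 u|=O(\rho^{-m})$), which you invoke rather than prove. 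Your proof of (2) --- decay from (1), Euclidean volume growth $Vol(M\cap B(t))\le Ct^{m}$ obtained from the monotonicity formula together with the finite density of the finitely many planar ends, then co-area and integration by parts, where the boundary term $t^{-2m}V(t)\le Ct^{-m}\to 0$ and $\int_{R_0}^{\infty}t^{-m-1}\,dt<\infty$ --- is precisely the Shen--Zhu argument the paper points to, now written out for arbitrary codimension, and it is sound. The one point worth making explicit is that the relative compactness of $D(R_0)$ and the legitimacy of the extrinsic-ball arguments already presuppose that the immersion is proper; this too is one of Anderson's conclusions (his Lemma 2.4, which the paper itself invokes in the proof of Corollary 5.4).
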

\begin{proof}(1) See \cite{An,Sc,Ty}.

(2) By using (1), the proof goes almost the same as that of \cite{SZ1}, except that one considers a minimal submanifold instead of a minimal hypersurface.  Since the proof has no essential difference,  we omit the detailed proof about (2) here.
\end{proof}
 \begin{remark}
 In \cite{SZ1}, the authors showed  the finiteness of $\|A\|_{L^2(M)}$ for minimal hypersurfaces with finite total scalar curvature.
\end{remark}
\begin{corollary}
Let $x:M^m \rightarrow R^N (m\geq 3)$ be an $m$-dimensional  complete minimal  submanifold in $R^N$ with finite total scalar curvature.
  If $|i_{v}A|^2\leq \frac{|A|^2}{2}$ holds outside a compact subset of $M$,
 then $M$ is an affine $m$-plane in $R^N$.
\end{corollary}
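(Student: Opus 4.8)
The plan is to combine Corollary 5.3 with Lemma 5.1, which together supply precisely the hypotheses needed to invoke the vanishing machinery outside a compact set. Since $M$ has finite total scalar curvature, Lemma 5.1(1) gives the pointwise decay $\|A\|_x \leq C/\rho^m$ for $x \in \partial D(\rho)$, and Lemma 5.1(2) gives $\int_M \|A\|^2\, dv < \infty$. The decay estimate is far stronger than the growth rate $\|A\|(x) = o(1/\rho(x))$ required in Corollary 5.3; indeed $C/\rho^m = o(1/\rho)$ as $\rho \to \infty$ whenever $m \geq 2$, so condition (5.7) is automatically satisfied for large $\rho$.

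The remaining ingredient is the pointwise inequality $|i_\nu A|^2 \leq \frac{1}{2}|A|^2$, which by hypothesis holds outside a compact subset of $M$. This is exactly the boundary condition appearing in Corollary 5.3, provided we locate the defining sphere $\partial D(R_0)$ outside that compact set. First I would fix $R_0 > 0$ large enough that $M \setminus B(R_0)$ lies in the region where $|i_\nu A|^2 \leq \frac{1}{2}|A|^2$ holds, and large enough that the decay $\|A\| \leq C/\rho^m$ forces $\|A\|(x) \leq \frac{m-2-\epsilon_0}{2\rho(x)}$ for some $\epsilon_0 > 0$ and all $\rho(x) \geq R_0$; since $m \geq 3$ this last inequality is clearly achievable for $R_0$ sufficiently large. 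With these choices, the hypotheses of Theorem 5.2(ii) (the minimal case) are in force on $M \setminus B(R_0)$.

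Next I would verify the energy growth condition (5.6). Because $\int_M \|A\|^2\, dv < \infty$ by Lemma 5.1(2), the tail integral $\int_{D(\rho)} |A|^2\, dv$ is bounded, hence trivially $o(\rho^{\mu_2})$ as $\rho \to \infty$ for the positive exponent $\mu_2 = m-2-2\delta(R_0) > 0$. Applying Theorem 5.2(ii), I conclude that the Gauss map $\gamma$ is constant on $M \setminus B(R_0)$; the unique continuation property of harmonic maps then propagates this to all of $M$, whence the second fundamental form $A$ vanishes identically and $M$ is an affine $m$-plane in $R^N$.

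The conceptual work is already absorbed into the earlier results, so no genuine obstacle remains; the only point demanding care is the bookkeeping of constants. Specifically, one must check that a single choice of $R_0$ can simultaneously satisfy the isotropy-type boundary condition $|i_\nu A|^2 \leq \frac{1}{2}|A|^2$, the decay bound $\|A\|(x) \leq \frac{m-2-\epsilon_0}{2\rho(x)}$, and the regularity requirement that $R_0$ be a regular value of $\rho$. The first comes from the hypothesis, the second from Lemma 5.1(1) together with $m \geq 3$, and the third from Sard's theorem, so these can indeed be arranged compatibly. Once $R_0$ is pinned down, the argument is a direct citation of Theorem 5.2(ii).
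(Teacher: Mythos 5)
Your proposal follows essentially the same route as the paper's: Lemma 5.1 supplies the pointwise decay $\|A\|\leq C/\rho^{m}$ and the finiteness of $\int_{M}\|A\|^{2}\,dv$, and the conclusion is then drawn from the localized vanishing machinery (Theorem 5.2(ii); the paper instead cites Corollary 5.3, whose proof reduces to Theorem 5.2), with unique continuation of harmonic maps propagating the constancy of the Gauss map to all of $M$. Your bookkeeping is also correct: choosing $R_{0}$ a large regular value outside the exceptional compact set, the decay gives $\delta(R_{0})\leq\frac{m-2-\epsilon_{0}}{2}$, hence $\mu_{2}=m-2-2\delta(R_{0})\geq\epsilon_{0}>0$, and a finite integral is trivially $o(\rho^{\mu_{2}})$.

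There is, however, one genuine omission: you never verify that the immersion $x\colon M^{m}\to R^{N}$ is \emph{proper}, which is an explicit hypothesis of Theorem 5.2 and is not among the hypotheses of the corollary you are proving. Properness is not a formality here: it is exactly what makes the extrinsic distance $\rho$ an exhaustion function, i.e.\ what makes the sublevel sets $D(R)=M\cap B(R)$ relatively compact, and without it the divergence-theorem and coarea arguments underlying Theorem 3.2, Corollary 3.4 and hence Theorem 5.2 cannot be run (indeed $\partial D(R_{0})$ need not even bound a compact piece of $M$). The paper's proof leads with precisely this point: by Lemma 2.4 of Anderson \cite{An}, finite total scalar curvature already forces the immersion to be proper. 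Alternatively, the decay $\|A\|=o(1/\rho)$ that you extracted from Lemma 5.1(1), combined with Theorem 1.1 of \cite{BJM} (this is how the paper proves Corollary 5.3), also yields properness. So the gap is fillable with tools the paper itself cites, but as written your argument invokes Theorem 5.2(ii) without checking one of its stated hypotheses.
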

\begin{proof}
   From Lemma 2.4 of \cite{An}, we know that $x: M^m \rightarrow R^N$ is proper. So the extrinsic distance function $\rho$
   is an exhaustion function. The Corollary 5.4 follows immediately from Lemma 5.1 and Corollary 5.3.
\end{proof}

\begin{corollary}
Let $x:M^m \rightarrow R^{m+1}(m\geq 3)$ be a minimally  immersed complete hypersurface with finite total scalar curvature.
Let $\eta_i(x)$$(i=1,\cdots,m)$ be the principal curvatures of $M$ at $x$. If there exists a compact subset $K$ of $M$
such that
\begin{equation}
\underset{i}{\max}\{\eta_i^2(x)\}\leq \frac{1}{2}\|A\|_x^2
\end{equation}
for any $x\in M\setminus K$, then $M$ is an affine  hyperplane in $R^{m+1}$.
\end{corollary}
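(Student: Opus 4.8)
The plan is to deduce Corollary 5.5 from Corollary 5.4 by showing that the pointwise bound on the principal curvatures forces precisely the interior-product inequality $|i_\nu A|^2\leq \tfrac{1}{2}|A|^2$ required there, where $\nu=\nabla\rho/|\nabla\rho|$ is the unit radial field of the extrinsic distance function $\rho$. Since $x:M^m\rightarrow R^{m+1}$ is a complete minimal hypersurface with finite total scalar curvature, by Lemma 2.4 of \cite{An} the immersion is proper and $\rho$ is an exhaustion function, so the analytic framework of Corollaries 5.3 and 5.4 is available; the only thing to supply is the verification of the boundary-type condition outside $K$.

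First I would reduce everything to a pointwise computation. Fix $x\in M\setminus K$ and, as $M^m\rightarrow R^{m+1}$ is a hypersurface, write $A(X,Y)=h(X,Y)\vec{n}$, where $\vec{n}$ is a local unit normal and $h$ is the scalar second fundamental form. Choose an orthonormal frame $\{e_i\}_{i=1}^m$ of $T_xM$ consisting of principal directions, so that $h(e_i,e_j)=\eta_i(x)\delta_{ij}$ and consequently $|A|_x^2=\sum_{i,j}h(e_i,e_j)^2=\sum_i\eta_i^2(x)$. Regarding $A$ as a bundle-valued $1$-form with values in the normal bundle $T^\perp M$ (equivalently in $x^{-1}TR^{m+1}$) as in Example 3.3, I would expand $\nu=\sum_i\nu_i e_i$ with $\sum_i\nu_i^2=1$ and compute
$$|i_\nu A|_x^2=\sum_{j=1}^m h(\nu,e_j)^2=\sum_{j=1}^m \nu_j^2\,\eta_j^2(x)\leq\Big(\max_i\eta_i^2(x)\Big)\sum_{j=1}^m\nu_j^2=\max_i\eta_i^2(x).$$

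Combining this elementary estimate with the hypothesis $\max_i\eta_i^2(x)\leq\tfrac{1}{2}\|A\|_x^2$, valid for all $x\in M\setminus K$, yields $|i_\nu A|_x^2\leq\tfrac{1}{2}|A|_x^2$ on $M\setminus K$. Thus all the hypotheses of Corollary 5.4 are met, and applying it gives that $M$ is an affine $m$-plane, i.e. a hyperplane in $R^{m+1}$. There is essentially no obstacle in this argument: the only new content is the linear-algebra inequality $|i_\nu A|^2\leq\max_i\eta_i^2$, which in fact holds for every unit tangent vector and in particular for the geometric field $\nu$; the substantive analytic work (the monotonicity formula, the harmonicity of the Gauss map, and the finiteness of $\int_M\|A\|^2\,dv$ from finite total scalar curvature) has already been carried out in Lemma 5.1 and Corollaries 5.3 and 5.4.
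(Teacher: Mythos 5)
Your proposal is correct and follows exactly the paper's route: the paper's proof of this corollary is simply "it follows immediately from Corollary 5.4," and your argument supplies the (correct) linear-algebra detail justifying that reduction, namely that diagonalizing the scalar second fundamental form in principal directions gives $|i_\nu A|^2=\sum_j \nu_j^2\eta_j^2\leq \max_i\eta_i^2\leq \tfrac{1}{2}\|A\|^2$ outside $K$.
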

\begin{proof}
Clearly the result of this corollary follows immediately from  Corollary 5.4.
\end{proof}

\begin{remark}
(1) Suppose $\mu_2(R_0)$ is as in Theorem 5.2 and $R_0$ is sufficiently large so that $\mu_2(R_0)>0$.
To get the result of Corollary 5.5, we only need to assume (5.8) holds on $\partial D(R_0)$; (2) It is known that the principal
curvatures of the catenoid in $R^{m+1}$ are (cf. \cite{TZ}) $\eta_2=\eta_3=\cdots =\eta_m=\eta$, $\eta_1=-(m-1)\eta$.
Clearly the catenoid does not satisfy the condition (5.8) for $m\geq 3$. This shows that the condition (5.8) (at least
on $\partial D(R_0)$) is necessary for the result. (3) The known nontrivial
examples of minimal submanifolds in $R^N$ with finite total scalar curvature include catenoid (cf. \cite{TZ}),
and the Lagrangian catenoid \cite{Ca} and the special Lagrangian submanifolds constructed by Lawlor \cite{La}.
(4) For a minimal submanifold $M^m \subset R^N$  with
finite total scalar curvature, some authors proved the Bernstein type results under other geometric conditions,
 such as the stability (\cite{SZ1,Wa}), having one end (\cite{An}), monotonicity (\cite{Mo}), etc. (5) In \cite{CL}, the authors proved that if $M$ is a submanifold with parallel mean curvature and
$\int_M \|A\|^m dv < +\infty$, then $M$ is minimal.  Hence the results in Corollary 5.4 and Corollary 5.5 actually hold for submanifolds with parallel mean curvature.

\end{remark}

\begin{center}
{\large{\textbf{5.2. The case of real K\"ahler submanifolds}}}
\end{center}

 Let $M^m$ be a K\"ahler manifold of complex dimension $m$ and let $x:M^m \rightarrow R^N$ an isometric immersion.
 We will call $M$ a real K\"ahler submanifold in $R^N$. Considering the complexified cotangent bundle of $M$,
 with the usual procedure, its second fundamental form $A$ can be split into different components as $A^{(p,q)}$,
 the $(p,q)$ component $(p+q=2, 0 \leq p,q \leq 2)$. A real K\"ahler submanifold $M$ is said to be $(p,q)$-geodesic if
 $A^{(p,q)}=0$.

 Notice that $(1,1)-$geodesic submanifolds have various different names (pluriminimal map, circular immersion)
 in literature (cf. \cite{BEFT,DR}). From \cite{DR}, we know that $(1,1)$-geodesic submanifolds coincide with minimal real
 K\"ahler submanifolds. The simplest examples of pluriminimal submanifolds are  minimal surfaces
 in $R^N$ and  K\"ahler submanifolds in $C^n$. In \cite{APS,DG,He}, the authors established
 Weierstrass representation for minimal real  K\"ahler
 submanifolds. Therefore general minimal real K\"ahler submanifolds  exist in abundance in $R^N$ too,
 besides the previous two special classes.

 For a real K\"ahler submanifold $x: M^m \rightarrow R^N$, we have two Gauss maps: the usual Gauss map $\gamma:
  M\rightarrow G_{2m}(R^N)$ and the complex Gauss map $\gamma^C: M\rightarrow G_{m}(C^N)$. In \cite{RT},
  the authors defined the complex Gauss map
  \begin{equation}
  \gamma^C: M\rightarrow G_{m}(C^N)
  \end{equation}
  by assigning to each point $p \in M$ the parallel translation in $C^N$ of the complex $m$-space $dx(T_pM^{(0,1)})$
  to the origin of $C^N$.

  Choosing a local orthonormal frame $\{e_A\}_{A=1}^{2m}=\{e_1,\cdots ,e_m,Je_1,\cdots,Je_m\}$ around $p$ such that
  $(\nabla_{e_A}e_B)_p=0$, we have local unitary frame fields $\{\eta_j\}_{j=1}^m \in T^{1,0}M$ and
  $\{\eta_{\overline{j}}\}_{j=1}^m \in T^{0,1}M$ given by:
  \begin{equation}
 \eta_j=\frac{1}{\sqrt{2}}(e_j-i Je_j), \ \ \eta_{\overline{j}}=\frac{1}{\sqrt{2}}(e_j+ i Je_j).
 \end{equation}
 The complex Gauss map may be expressed by
 \begin{equation}
 \gamma^C=\eta_{\overline{1}}\wedge \cdots \wedge \eta_{\overline{m}}.
 \end{equation}
 Then we get
 \begin{equation}
 (d\gamma^C)_p(X)=\overset{m}{\underset{k=1}{\sum}}\eta_{\overline{1}}\wedge \cdots \wedge A(X,\eta_{\overline{k}})
 \wedge \cdots \wedge\eta_{\overline{m}}.
 \end{equation}
 \begin{lemma}                                                                                                  
 Suppose $x: M^m \rightarrow R^N$ ($dim_CM = m$) is a minimal real K\"ahler submanifold. Then $\gamma^C$ is anti-holomorphic
 and $\|d\gamma^C\|^2=\|A\|^2$.
 \end{lemma}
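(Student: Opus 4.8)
The plan is to read off both conclusions from the explicit formula (5.14) for $d\gamma^C$ once we record the defining feature of the hypothesis. As remarked before the statement, a minimal real K\"ahler submanifold is exactly a $(1,1)$-geodesic submanifold, so its second fundamental form has vanishing $(1,1)$-part; in the unitary frame (5.11) this reads $A(\eta_j,\eta_{\bar k})=0$ for all $j,k$. (Equivalently $A(e_j,e_k)+A(Je_j,Je_k)=0$, which is just the trace-free, i.e. minimality, condition rewritten with $J$.) This single identity drives everything.

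For anti-holomorphicity I would substitute the $(1,0)$-vectors $\eta_j$ into (5.14):
$$
(d\gamma^C)_p(\eta_j)=\sum_{k=1}^m \eta_{\bar 1}\wedge\cdots\wedge A(\eta_j,\eta_{\bar k})\wedge\cdots\wedge\eta_{\bar m}=0,
$$
since every summand carries the factor $A(\eta_j,\eta_{\bar k})=0$. As $\{\eta_j\}$ spans $T^{1,0}M$, this says $\partial\gamma^C=0$, i.e. $d\gamma^C$ kills the holomorphic tangent space, which is precisely the assertion that $\gamma^C$ is anti-holomorphic. It is worth noting that the K\"ahler condition is what legitimizes (5.14) in the first place: since $\nabla J=0$, the tangential part of the ambient derivative $D_X\eta_{\bar k}$ stays in $T^{0,1}M=\mathrm{span}\{\eta_{\bar l}\}$ and therefore drops out of the wedge product, leaving only the normal contribution $A(X,\eta_{\bar k})$.

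For the energy identity the vanishing on $T^{1,0}M$ lets me restrict attention to the antiholomorphic directions. Writing $e_j=\tfrac{1}{\sqrt2}(\eta_j+\eta_{\bar j})$ and $Je_j=\tfrac{i}{\sqrt2}(\eta_j-\eta_{\bar j})$ and using $(d\gamma^C)(\eta_j)=0$, one finds $\|d\gamma^C\|^2=\sum_j |(d\gamma^C)(\eta_{\bar j})|^2$. I would then expand $(d\gamma^C)(\eta_{\bar j})$ by (5.14) and compute its Hermitian norm on $\Lambda^m C^N$ using the Gram-determinant formula for inner products of decomposable $m$-vectors. The decisive observation is that each vector $A(\eta_{\bar j},\eta_{\bar k})$ is normal to $M$, hence orthogonal to every $\eta_{\bar l}$: this forces the Gram determinant pairing the $k$-th and $l$-th summands to vanish whenever $k\neq l$ (the row coming from the untouched factor $\eta_{\bar l}$ is identically zero), so only the diagonal terms survive and $|(d\gamma^C)(\eta_{\bar j})|^2=\sum_k |A(\eta_{\bar j},\eta_{\bar k})|^2$. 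Summing over $j$ gives $\|d\gamma^C\|^2=\sum_{j,k}|A(\eta_{\bar j},\eta_{\bar k})|^2$, and a parallel expansion of $\|A\|^2$ in the unitary frame --- in which the $(1,1)$-block disappears by minimality and the $(2,0)$- and $(0,2)$-blocks are complex conjugates of equal norm --- identifies this with $\|A\|^2$. The main obstacle is exactly this last bookkeeping: establishing that the off-diagonal Gram determinants vanish, and keeping the normalization between the Hermitian metric on $\Lambda^m C^N$ and the realified metric on $G_m(C^N)$ consistent, so that the proportionality constant between $\sum_{j,k}|A(\eta_{\bar j},\eta_{\bar k})|^2$ and $\|A\|^2$ comes out to be $1$.
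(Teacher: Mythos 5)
Your proposal follows the paper's proof along essentially the same route: the paper likewise invokes the Dajczer--Rodriguez equivalence between minimal real K\"ahler submanifolds and $(1,1)$-geodesic ones, substitutes $\eta_j$ into the displayed derivative formula (it is (5.13) in the paper, not (5.14)), concludes $(d\gamma^C)_p(\eta_j)=0$ and hence $\partial\gamma^C=0$, and then asserts the norm identity with no further detail. One correction to your parenthetical remark: the identity $A(e_j,e_k)+A(Je_j,Je_k)=0$ for \emph{all} $j,k$ is not ``the trace-free condition rewritten with $J$'' --- pointwise it is much stronger than minimality, which is only the vanishing of the trace. That minimality implies $(1,1)$-geodesy for real K\"ahler submanifolds is exactly the nontrivial theorem of \cite{DR} that the paper cites; since your main argument already appeals to that equivalence, the parenthetical is a harmless but incorrect gloss and should be removed.

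On the energy identity, your Gram-determinant computation correctly supplies what the paper dismisses with ``clearly'': the off-diagonal terms die because $A$ is normal-valued, giving $\sum_j|(d\gamma^C)(\eta_{\overline{j}})|^2=\sum_{j,k}|A(\eta_{\overline{j}},\eta_{\overline{k}})|^2$. However, the factor of $2$ you flag at the end is a genuine discrepancy in your chain, not mere bookkeeping: your own (correct) prescription for expanding $\|A\|^2$ --- vanishing $(1,1)$-block, conjugate $(2,0)$ and $(0,2)$ blocks of equal norm --- gives $\|A\|^2=2\sum_{j,k}|A(\eta_{\overline{j}},\eta_{\overline{k}})|^2$, so with the Pl\"ucker (complex Hilbert--Schmidt) metric on $G_m(C^N)$ your computation lands on $\|d\gamma^C\|^2=\tfrac12\|A\|^2$. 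The equality as stated requires endowing $G_m(C^N)$ with the metric obtained by regarding a complex $m$-plane as a real $2m$-plane, i.e.\ twice the complex Hilbert--Schmidt metric; under that convention each $\|d\gamma^C(e_A)\|^2$ doubles and one recovers precisely the paper's intermediate step $\|d\gamma^C\|^2=2\sum_{j,k}\|A(\eta_{\overline{j}},\eta_{\overline{k}})\|^2=\|A\|^2$. This is also the convention consistent with the identity $\|d\gamma\|^2=\|A\|^2$ for the real Gauss map used in Theorem 5.1. The constant is immaterial for the applications (Theorems 5.3 and 5.4 only need proportionality), but to close your proof you must fix this normalization rather than leave the constant as an acknowledged obstacle.
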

 \begin{proof}
 By the  assumption that $M$ is $(1,1)$-geodesic, that is, $A(\eta_j,\eta_{\overline{k}})=0$, we get from (5.13) that
 $$(d\gamma^C)_p(\eta_j)=0,\ \ j=1,\cdots,m.$$
 Therefore the partial differential $\partial \gamma^C: T^{1,0}M \rightarrow T^{1,0}G_m(C^N)$ vanishes.
 This proves the anti-holomorphicity of $\gamma^C$. Clearly (5.13) gives
 $$\|d\gamma^C\|^2=2\overset{m}{\underset{j,k=1}{\sum}}\|A(\eta_{\overline{j}},\eta_{\overline{k}})\|^2=\|A\|^2.$$
 \end{proof}
 \begin{remark}
 The authors in \cite{RT} established the anti-holomorphicity of $\gamma^C$ for minimal real K\"ahler submanifolds.
 Let $H_m(C^N)$ be the space of complex $m$-dimensional isotropic subspaces of $C^N$. Actually the isometric immersion $x$
 factors through $H_m(C^N)$\\
 $\subset G_m(C^N)$ and thus $\gamma^C$ becomes an anti-holomorphic map into $H_m(C^N)$. Therefore their
 result generalized Chern's result \cite{Che} about the Gauss map of minimal surfaces in $R^N$.
 \end{remark}

 \begin{theorem}
 Let $x: M^m \rightarrow R^N (m\geq 2)$ be a  complete minimal real K\"ahler submanifold with complex dimension $m$. If
$$\|A\|(x) =o(\frac{1}{\rho(x)}) \qquad as \ \rho(x)\rightarrow +\infty$$
 and
$$\int_M{|A|^2}dv<+\infty,$$
 then $M$ is an affine $2m$-plane in $R^N$.
 \end{theorem}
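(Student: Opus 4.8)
The plan is to treat $M$, of real dimension $2m\ge 4$, as a minimal (hence parallel mean curvature, with $H=0$) submanifold of $R^N$ and to apply Corollary 5.3 in this real dimension $2m$. Since $M$ is minimal, its Gauss map $\gamma$ is harmonic, so $d\gamma$ satisfies the conservation law and $|d\gamma|^2=|A|^2$ (and $=|d\gamma^C|^2$ by Lemma 5.2). The growth hypotheses $\|A\|(x)=o(1/\rho(x))$ and $\int_M|A|^2\,dv<\infty$ are precisely those demanded by Corollary 5.3, so the only remaining point is to verify the structural condition (5.8), namely $|i_v A|^2\le \tfrac12|A|^2$ with $v=\nabla\rho/|\nabla\rho|$. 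This is exactly the assertion in Remark 5.2 that minimal real K\"ahler submanifolds automatically satisfy (5.8), and establishing it is the heart of the proof.

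First I would record the algebraic consequence of minimality in the real K\"ahler setting. By \cite{DR} a minimal real K\"ahler submanifold is $(1,1)$-geodesic, i.e. $A(\eta_j,\eta_{\overline{k}})=0$; taking real and imaginary parts with $\eta_j=\tfrac1{\sqrt2}(e_j-iJe_j)$ shows this is equivalent to
\begin{equation*}
A(JX,JY)=-A(X,Y),\qquad A(JX,Y)=A(X,JY)\qquad\forall X,Y\in TM .
\end{equation*}
Regarding $A$ as a bundle-valued $1$-form, so that $(A\odot A)(X,Y)=\sum_i\langle A(X,e_i),A(Y,e_i)\rangle=(d\gamma\odot d\gamma)(X,Y)$, I would then split an orthonormal frame adapted to $J$ as $\{f_a,Jf_a\}_{a=1}^m$ and use the two relations above to obtain $(A\odot A)(JX,JY)=(A\odot A)(X,Y)$. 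Thus $A$ (equivalently $d\gamma$) is a $J$-invariant $1$-form in the sense of Definition 3.1; equivalently, this is just the anti-holomorphicity of $\gamma^C$ from Lemma 5.2, which makes $d\gamma^C$ pluriconformal.

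Granting $J$-invariance, condition (5.8) follows at once: since $v$ and $Jv$ are orthonormal, $|i_vA|^2=(A\odot A)(v,v)=(A\odot A)(Jv,Jv)=|i_{Jv}A|^2$, while completing $\{v,Jv\}$ to an orthonormal basis gives $|i_vA|^2+|i_{Jv}A|^2\le\sum_i(A\odot A)(e_i,e_i)=|A|^2$, whence $|i_vA|^2\le\tfrac12|A|^2$ everywhere. It remains to check the ambient hypotheses: because $\|A\|(x)=o(1/\rho(x))$, Theorem 1.1 of \cite{BJM} shows that $x$ is proper, so $\rho$ is an exhaustion function, and the real dimension satisfies $2m\ge4>3$. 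With (5.8) verified and $\int_M|A|^2\,dv<\infty$ in force, Corollary 5.3 (applied in real dimension $2m$) forces $d\gamma\equiv0$, i.e. $A\equiv0$, so $M$ is totally geodesic and hence an affine $2m$-plane in $R^N$. The main obstacle is precisely the passage from the $(1,1)$-geodesic relations to $(A\odot A)(JX,JY)=(A\odot A)(X,Y)$ and thence to (5.8); once that $J$-invariance is established, the conclusion is an immediate application of Corollary 5.3.
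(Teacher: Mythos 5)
Your proposal is correct, and it follows the paper's overall strategy (reduce to Corollary 5.3 by verifying condition (5.8), with the growth hypotheses already matching), but it implements the key step by a somewhat different route. The paper works with the \emph{complex} Gauss map $\gamma^C: M \rightarrow G_m(C^N)$: it invokes Lemma 5.2 to get anti-holomorphicity of $\gamma^C$ and $\|d\gamma^C\|^2=\|A\|^2$, concludes that $d\gamma^C$ is pluriconformal, and then obtains the inequality $\tfrac{1}{2}|d\gamma^C|^2 \geq |i_\nu d\gamma^C|^2$ exactly as in the proof of Theorem 3.4, before citing Corollary 5.3. You instead work with the \emph{real} Gauss map $\gamma: M \rightarrow G_{2m}(R^N)$ (harmonic by minimality) and establish the $J$-invariance of $A$ itself, deriving $(A\odot A)(JX,JY)=(A\odot A)(X,Y)$ directly from the $(1,1)$-geodesic relations $A(JX,JY)=-A(X,Y)$ and $A(JX,Y)=A(X,JY)$, which gives $|i_vA|^2=|i_{Jv}A|^2$ and hence $|i_vA|^2\leq \tfrac{1}{2}|A|^2$ by summing over an orthonormal basis containing $\{v,Jv\}$ — the same inequality mechanism as (3.17), but applied to $A$ rather than to $d\gamma^C$. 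The two arguments are equivalent in content (your $J$-invariance of $A$ is precisely the pluriconformality that Lemma 5.2 encodes via $\gamma^C$), but your version has the advantage of applying Corollary 5.3 verbatim, since its hypothesis (5.8) is stated on $A$; the paper's version verifies the condition for $d\gamma^C$ and leaves implicit the (harmless, but convention-dependent) translation back to $A$, in exchange for staying within the complex Gauss map framework that Lemma 5.2 and Remark 5.3 were set up for. Your checks of the remaining hypotheses (properness via \cite{BJM}, real dimension $2m\geq 4 > 3$, finiteness of $\int_M |A|^2$) are exactly what is needed.
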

 \begin{proof}
 Clearly $\gamma^C$ as a special harmonic map satisfies the conservation law, that is, $divS_{\gamma^C}=0$. Let $\nu$ be as
 in Theorem 5.2 and Corollary 5.4.
  By Lemma 5.2, $\gamma^C$ is anti-holomorphic, so $d\gamma^C$ is pluriconformal. Therefore
 we have
 \begin{eqnarray*}
 \frac{|d\gamma^C|^2}{2} &\geq& (d\gamma^C \odot d\gamma^C)(\nu,\nu)\\
 &=& |i_{\nu}d\gamma^C|^2
 \end{eqnarray*}
 as shown in the proof of Theorem 3.4. Hence the result of Theorem 5.3 follows immediately from Corollary 5.3.
\end{proof}

 By Lemma 5.1 and Theorem 5.3, we can obtain the following theorem.
\begin{theorem}
 Let $x: M^m \rightarrow R^N (m\geq 2)$ be a complete minimal real K\"ahler submanifold with complex dimension $m$. If $M$ has finite
 total scalar curvature, then $M$ is an affine $2m$-plane in $R^N$.
 \end{theorem}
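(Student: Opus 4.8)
The plan is to verify that the two analytic hypotheses required by Theorem 5.3 are automatic consequences of the finite total scalar curvature assumption, so that Theorem 5.3 then applies verbatim. The one point demanding care is a dimension translation: in Lemma 5.1 the symbol $m$ denotes the \emph{real} dimension of the submanifold, while in the present statement $m$ is the \emph{complex} dimension of the Kähler manifold $M$. Since $M$ has complex dimension $m\geq 2$, as a real submanifold $x:M\rightarrow R^N$ it has real dimension $2m\geq 4\geq 3$; hence Lemma 5.1 does apply, with the role of its ``$m$'' played by $2m$.

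First I would invoke Lemma 5.1 for the complete minimally immersed real submanifold $x:M\rightarrow R^N$ of real dimension $2m$. Because $M$ has finite total scalar curvature, part (1) of that lemma yields a pointwise decay bound of the form $\|A\|_x\leq C/\rho^{2m}$ for $x\in\partial D(\rho)$, and part (2) yields the $L^2$-finiteness $\int_M\|A\|^2\,dv<\infty$. (The properness needed to make $\rho$ an exhaustion function is itself a consequence of finite total scalar curvature, as already used in the proof of Corollary 5.4; in any case Theorem 5.3 supplies properness internally from the decay condition.)

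Next I would recast the pointwise bound in the form demanded by Theorem 5.3. From $\|A\|_x\leq C/\rho^{2m}$ one gets $\rho(x)\,\|A\|_x\leq C/\rho^{2m-1}\to 0$ as $\rho\to\infty$, since $2m-1\geq 3>0$; thus $\|A\|(x)=o(1/\rho(x))$. Together with $\int_M\|A\|^2\,dv<\infty$, these are precisely the two hypotheses of Theorem 5.3 for a complete minimal real Kähler submanifold of complex dimension $m\geq 2$. Applying Theorem 5.3 then concludes that $M$ is an affine $2m$-plane in $R^N$.

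The main obstacle here is essentially nil, since all of the substantive work is carried by Lemma 5.1 (encoding the consequences of finite total scalar curvature) and by Theorem 5.3 (which runs the monotonicity and vanishing machinery on the anti-holomorphic complex Gauss map $\gamma^C$ via Corollary 5.3). The only place where an error could creep in is the real/complex dimension bookkeeping; keeping $2m$ consistently in place of $m$ is exactly what guarantees both that the decay exponent $2m$ comfortably dominates the $1/\rho$ threshold and that the complex-dimension restriction $m\geq 2$ meets the real-dimension requirement $m\geq 3$ of Lemma 5.1.
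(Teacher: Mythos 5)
Your proposal is correct and follows exactly the paper's own route: the paper proves this theorem in one line by combining Lemma 5.1 (applied to the underlying real $2m$-dimensional minimal submanifold) with Theorem 5.3. Your careful handling of the real/complex dimension bookkeeping ($2m\geq 4\geq 3$, decay exponent $2m$ giving $\|A\|=o(1/\rho)$) is precisely the implicit content of that one-line deduction.
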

 \begin{corollary}
 (\cite{Mo}) Let $M^m \rightarrow C^n (m\geq 2)$ be a complete K\"ahler submanifold with complex dimension $m$. If $M$ has finite total
 scalar curvature, then $M$ is an affine complex $m$-plane.
 \end{corollary}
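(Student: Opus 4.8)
The plan is to recognize Corollary 5.6 as a direct specialization of Theorem 5.4. Identifying $C^n$ with $R^{2n}$ isometrically, a complete K\"ahler submanifold $x : M^m \to C^n$ of complex dimension $m$ is nothing but a complete minimal real K\"ahler submanifold in $R^N$ with $N = 2n$. Hence it suffices to check that $M$ fits the framework of \S5.2, invoke Theorem 5.4, and finally upgrade the resulting real affine $2m$-plane to a complex affine $m$-plane.

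First I would verify the hypotheses of Theorem 5.4. By assumption $M$ is K\"ahler of complex dimension $m$ and $x$ is an isometric immersion into $C^n = R^{2n}$, so $M$ has real dimension $2m$ and the finiteness of its total scalar curvature $\int_M \|A\|^{2m}\,dv$ is exactly the condition appearing in Theorem 5.4. The essential point is that a complex submanifold is automatically $(1,1)$-geodesic. Indeed, the holomorphicity of the immersion gives $A(JX, Y) = JA(X, Y)$ (the analogue of (3.19) in Example 3.3 with $B$ replaced by $A$); combining this with the symmetry of $A$ and $J^2 = -\mathrm{id}$, a short computation in the unitary frame (5.11) yields
\begin{equation*}
A(\eta_j, \eta_{\overline{k}}) = \frac{1}{2}\bigl[A(e_j, e_k) + A(Je_j, Je_k)\bigr] = 0,
\end{equation*}
because the two mixed imaginary terms cancel and $A(Je_j, Je_k) = -A(e_j, e_k)$. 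Thus $A^{(1,1)} = 0$, and $M$ is a minimal real K\"ahler submanifold in the sense of \S5.2.

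With the hypotheses in place, Theorem 5.4 applies directly and produces that $M$ is an affine $2m$-plane in $R^{2n} = C^n$. It remains to promote this real plane to a complex one. Being an affine $2m$-plane means $M$ is totally geodesic, so $M = p_0 + V$ where $V = T_{p_0}M$ is a real $2m$-dimensional linear subspace of $C^n$. Since $M$ is a complex submanifold, each tangent space $T_pM$ is $J$-invariant; in particular $V$ is a $J$-invariant real subspace of real dimension $2m$, which is precisely a complex linear subspace of complex dimension $m$. Therefore $M = p_0 + V$ is an affine complex $m$-plane, as claimed.

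The only content beyond citing Theorem 5.4 is the cancellation establishing $A^{(1,1)} = 0$ together with the elementary fact that a $J$-invariant real $2m$-plane is a complex $m$-plane; neither presents a real obstacle, so the principal task is simply to set up the specialization and the final complex-structure observation correctly.
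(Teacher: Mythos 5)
Your proposal is correct and follows exactly the route the paper intends: Corollary 5.6 is stated as an immediate specialization of Theorem 5.4, the paper having already noted in \S 5.2 that K\"ahler submanifolds of $C^n$ are among the $(1,1)$-geodesic (i.e.\ minimal real K\"ahler) submanifolds. Your two added details --- the frame computation showing $A^{(1,1)}=0$ and the observation that a $J$-invariant affine $2m$-plane is an affine complex $m$-plane --- are precisely the steps the paper leaves implicit, and both are carried out correctly.
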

 \begin{remark}
 The assumption $m \geq 2$ in Theorem 5.4 or Corollary 5.6 is necessary, because there are many
 nontrivial examples of minimal surfaces in $R^N$ with finite total curvature (cf. \cite{Os}).
 \end{remark}

\vspace{0.5cm}
$\mathbf{Acknowledgements}$.  The authors would also like to thank Dr. Y.B. Han for his careful reading of the manuscript.



\begin{thebibliography}{99}

\bibitem[1]{An} M.T. Anderson, The compactification of a minimal submanifold in Euclidean space by the Gauss map, IHES preprint, (1984).

\bibitem[2]{APS}  C. Arezzo, G.P. Pirola and  M. Solci, The Weierstrass representation for pluriminimal submanifolds,
       arXiv:math/0104124v1 [math.DG], 2001
\bibitem[3]{Ba} P. Baird, Stress-energy tensors and the Lichnerowicz Laplacian, Journal of Geom. and Phys.  58 (2008), 1329-1342.
\bibitem[4]{BE} P. Baird, J. Eells, A conservation law for harmonic maps, Geometry Symposium, Utrecht 1980: Lecture notes in Mathematics,
                   Vol.894, Springer (1982), 1-25.

\bibitem[5]{BEFT} F.E. Burstall, J.H. Eschenburg, M.J. Ferreira and R. Tribuzy, K\"ahler submanifolds
       with parallel pluri-mean curvature, Diff. Geom. Appl.  20(1) (2004),47-66.

\bibitem[6]{BJM} G.P. Bessa, L. Jorge and J.F. Montenegro, Complete submanifolds of $R^n$ with finite topology,  Comm. Annly. Geom.  15(4) (2007), 725-732.

\bibitem[7]{Ca} I. Castro, Minimal Lagrangian submanifolds in complex Euclidean space, Proceedings 1st International Meeting
on Geometry and Topology, (1998), 43-49.

\bibitem[8]{Ch} Q. Chen, Uniqueness of minimal submanifolds in Euclidean Space, Ann. Global Anal. Geom.  16 (1998), 413-418.

\bibitem[9]{Che} S.S. Chern, Minimal surfaces in Euclidean space of N dimensions, Sympos. in Honour of Marston Morse, Princetion
Univ. Press, Princeton, N.J.  (1965), 187-198.

\bibitem[10]{CL}  L.F. Cheung, P.F. Leung, The mean curvature and volume growth of complete noncompact submanifolds, Diff. Geom. Appl. 8 (1998), 251-256.

\bibitem[11]{CM} T.H. Colding, W.P. Minicozzi, Minimal submanifolds, Bull. London Math. Soc.  38 (2006), 353-395.




\bibitem[12]{DG} M. Dajczer,  D. Gromoll, The Weierstrass representation for complete minimal real Kaehler submanifolds of codimension two,
                    Invent. Math.  119 (1995), 235-242.


\bibitem[13]{Do} Y.X. Dong, Monotonicity formulae and holomorphicity of harmonic maps between K\"ahler manifolds,
                     arXiv:1011.6016v2 [math.DG], 2011.


\bibitem[14]{DR} M. Dajczer, L. Rodriguez, Rigidity of real Kaehler submanifolds, Duke Math. J.  53(1) (1986), 211-220.

\bibitem[15]{DW} Y.X. Dong,  S.S. Wei, On vanishing theorems for vector bundle valued p-forms and their applications, Comm. Math. Phys. Vol. 304  (2011), 329-368.

\bibitem[16]{EL}  J. Eells, L. Lemaire, Selected topics in harmonic maps, CBMS Reg. Conf. Ser. Math. 50, Amer. Math. Soc., Providence, 1983.


\bibitem[17]{FZ} H. Fu, Z. Li, The structure of complete manifolds with weighted Poincar$\acute{e}$ inequalities and minimal hypertsurfaces,
International J. Math. 21 (2010), 1-8.

\bibitem[18]{GW} R.E. Greene, H. Wu, Function theory on manifolds which posses a pole, Lecture Notes in Math. Vol.699, Springer-Verlag, 1979.

\bibitem[19]{He} P. Hennes, Weierstrass Representation of minimal real K\"ahler submanifolds, Ph.D. Thesis, state University
  of New York at Stony Brook, (2001).

\bibitem[20]{JXY} J. Jost, Y. L. Xin and Ling Yang, The geometry of Grassmannian manifolds and Bernstein type theorems for higher codimension,
                     arXiv:1109.6394v1, 2011.



\bibitem[21]{Ka} A. Kasue, Gap theorem for minimal submanifolds of Euclidean space, J. Math. Soc. Japan.  38(3) (1986), 473-492.

\bibitem[22]{Kas}  M. Kassi, A Liouville Theorem for F-harmonic maps with finite F-energy, Electronic Journal of differential
   Equations, 15 (2006), 1-9.


\bibitem[23]{La} G. Lawlor, The angle criterion, Invent. Math.  95 (1989), 437-446.

\bibitem[24]{LW} H.Z, Li, G.X. Wei, Stable complete minimal hypersurfaces in $R^4$, Matem$\acute{a}$tica Contempor$\hat{a}$nea, 2005.

\bibitem[25]{Mo} H. Moore, Minimal submanifolds with finite total scalar curvature, Indiana Univ. Math. J.  45 (1996), 1021-1043.

\bibitem[26]{Mor} A. Moroianu, Lectures on K\"ahler Geometry, London Mathematical Society Student Texts 69.

\bibitem[27]{MSY} N. Mok, Y.T. Siu and S.-T. Yau, The Poincar$\acute{e}$-Lelong equation on complete K\"ahler manifolds, Compositio Math. 44 (1981), 183-218.

\bibitem[28]{NST} L. Ni, Y.G. Shi and L.F. Tam, Ricci flatness of asymptotically locally Euclidean metrics, Trans. Amer. Math. Soc. 355 (2002), 1933-1959.

\bibitem[29]{Os} R. Osserman, A Survey of Minimal Surfaces, Dover Pub., 1986.

\bibitem[30]{PRS} S. Pigola, M. Rigoli and  A.G. Setti, Vanishing and finiteness results in geometric analysis:
a generalization of the Bochner technique, Prog. in Math.  Vol.266 (2008).

\bibitem[31]{PV} S. Pigola, G. Veronelli, Remarks on $ L^{p} $-vanishing results in geometric analysis. arXiv:1011.5413v1 [math.DG].



\bibitem[32]{RS}  M. Rigoli, A. G. Setti, Energy estimates and Liouville theorems for harmonic maps,
                  Intern.  J. of Math. Vol. 11, No. 3 (2000) 413-448.
\bibitem[33]{RT} M. Ridoli, R. Tribuzy, The Gauss map for K\"ahlerian submanifolds of $R^n$,
Transactions of the Amer. Math. Soc. 332(2) (1992), 515-528.

\bibitem[34]{Sc} R. Schoen, Uniqueness, symmetry, and embeddedness of minimal surfaces, J. Diff. Geom.  18 (1983), 791-809.

\bibitem[35]{Se} H.C.J. Sealey, The stress-energy tensor and vanishing of $L^2$ harmonic forms, preprint.


\bibitem[36]{SZ1} Y.B. Shen, X.H. Zhu, On the stable complete minimal hypersurfaces in $R^{n+1}$, Amer. J. Math.  120 (1998), 103-116.

\bibitem[37]{SZ2} Y.B. Shen, X.H. Zhu, On complete hypersurfaces with constant mean curvature and
     finite $L^p$-norm curvature in $R^{n+1}$, Acta Math. Sinica 21 (2004), 631-642.

\bibitem[38]{Ta1} K. Takegoshi, A non-existence theorem for pluriharmonic maps of finite energy, Math. Z. 192 (1986), 21-27.

\bibitem[39]{Ta2} K. Takegoshi, Energy estimates and Liouville theorems for harmonic maps, Ann. Scient. $\acute{E}$c. Norm. Sup.23 (1990), 563-592.

\bibitem[40]{Ty} T. Tysk, Finiteness of index and total scalar curvature for minimal hypersurfaces, Proc. Amer. Math. Soc. 105 (1989), 429-435.

\bibitem[41]{TZ} L.F. Tam, D. Zhou, Stability properties for higher dimensional catenoid in $R^{N+1}$,
    Proc. Amer. Math. Soc. 137 (2009), 3451-3461.

\bibitem[42]{Wa} Q.L. Wang, On minimal subamnifolds in an Euclidean space, Math. Nachr. 261-262 (2003), 176-180.

\bibitem[43]{XD} S. Xu, Q. Deng, On complete noncompact submanifolds with constant mean curvature and finite total
   curvature in Euclidean space, Arch. Math. 87 (2006), 60-71.

\bibitem[44]{XG} H.W. Xu, J.R. Gu, A general gap theorem for submanifolds with parallel mean curvature in $R^{n+p}$, Comm. Annly. Geom. 15(1)
         (2007), 175-194.


\bibitem[45]{Xi}  Y.L. Xin, Differential forms, conservation law and monotonicity formula, Scientia Sinica (Ser A) Vol.XXIX (1986), 40-50.

\bibitem[46]{XY1} Y.L. Xin, L. Yang, Curvature estimates for minimal submanifolds of higher codimension, Chin. Anna. Math.
          30(4) (2009), 379-396.

\bibitem[47]{XY2} Y.L. Xin, L. Yang, Convex functions on Grassmannian manifolds and Lawson-Osserman problem, Adv. in Math.
 219(4) (2008),  1298-1326 .




\vspace{1cm}

School of Mathematical Sciences\\
And\\
Laboratory of Mathematics for Nonlinear Science\\
Fudan University, Shanghai 200433\\
P.R. China


yxdong@fudan.edu.cn\\
linhezi$\_$1@163.com









\end{thebibliography}
\end{document}